\newtheorem{theorem}{Theorem}[section]
\newtheorem{lemma}[theorem]{Lemma}
\newtheorem{proposition}[theorem]{Proposition}
\newtheorem{corollary}[theorem]{Corollary}
\theoremstyle{definition}
\newtheorem{defn}[theorem]{Definition}
\theoremstyle{remark}
\newtheorem{remark}[theorem]{Remark}
\newtheorem*{claim}{Claim}
\newtheorem{question}[theorem]{Question}
\def\Z{\mathbb Z}
\def\R{\mathbb R}
\def\N{\mathbb N}
\def\RH{\mathbb H}
\def\G{\mathcal G}
\def\C{\mathcal C}
\def\XX{\mathbf X}
\def\T{\mathcal T}
\def\L{\mathcal L}
\def\H{\mathcal H}
\def\P{\mathcal P}
\def\XX{\mathbf X}
\def\Isom{\mathrm{Isom}}
\def\i{\mathrm{i}}
\def\rk{\mathrm{rk}}
\def\gl{\mathrm{GL}}
\def\sl{\mathrm{SL}}
\def\out{\mathrm{Out}}
\def\aut{\mathrm{Aut}}
\def\mcg{\mathrm{MCG}}
\def\pmcg{\mathrm{PMCG}}
\def\stab{\mathrm{Stab}}
\newcommand{\orth}{\perp}
\newcommand{\tsh}[1]{\left\{\kern-.7ex\left\{#1\right\}\kern-.7ex\right\}}
\title{Central extensions and proper actions on products of hyperbolic spaces}
\author{Bingxue Tao}
\address{Department of Mathematics, Kyoto University, Kyoto 6068502, Japan}
\email{tao.bingxue.s66@kyoto-u.jp}
\author{Renxing Wan}
\address{School of Mathematical Sciences,  Key Laboratory of MEA (Ministry of Education) \& Shanghai Key Laboratory of PMMP,  East China Normal University, Shanghai 200241, China P. R.}
\email{rxwan@math.ecnu.edu.cn}
\keywords{central extension, property QT, quasimorphism, mapping class group, outer automorphism group, 3-manifold group}
\begin{document}

\begin{abstract}
    The main result of this paper identifies boundedness of the Euler class as the exact obstruction to preserving property QT under central extensions. For a central extension of groups $1\to Z\to E\to G\to 1$, we prove that $E$ has property QT if and only if $Z$ is finitely generated, $G$ has property QT, and the Euler class of the extension is bounded. This is achieved by using quasimorphisms as a bridge between central extensions and group actions. 
    As applications, we show that mapping class groups of finite-type surfaces possibly with boundary, multicurve stabilizers, and outer automorphism groups of torsion-free one-ended hyperbolic groups have property QT. We also show that Sela’s central extension description of the latter has a bounded Euler class.

    In addition, we introduce property PH, which is a weaker analogue of property QT related to locally uniform exponential growth of groups, and derive the same stability results under central extensions. We provide several examples with or without property PH. In particular, the fundamental group of a compact orientable $3$-manifold $M$ has property PH whenever no summand in the sphere-disk decomposition  of $M$ supports Nil geometry. 
\end{abstract}
\maketitle

\section{Introduction}

\subsection{Central extensions and property QT}

The central theme of this paper is the stability of properties defined by group actions on finite products of hyperbolic spaces under central extensions. Let
\begin{equation}\label{Extension}
    1\to Z\xrightarrow{\iota}E\xrightarrow{\pi}G\to 1.
\end{equation}
be a central extension of groups. When $Z$ is finitely generated, such an extension defines a cohomology class $[\omega]\in H^2(G,Z)$, called the \textit{Euler class} (see Section \ref{subsec: CentralExtension}).

The two main properties we study are \textit{property QT} and \textit{property PH}. In \cite{BBF21}, Bestvina--Bromberg--Fujiwara proposed property QT for finitely generated groups to apply the projection complex techniques developed in \cite{BBF15}. Since then, the study of property QT has received a great deal of interest; see \cite{HP22,NY23a,HNY25,But25,Ver24,Tao26,PSZ25} for example. Specifically, a finitely generated group $G$ has \textit{property QT} if $G$ acts isometrically on a finite product of quasi-trees equipped with the $\ell^1$-metric such that the orbit map is a quasi-isometric embedding. In this paper, when we say a group has property QT, we always assume that the group is finitely generated.

In this paper, we also introduce a new property for groups, called \textit{property PH} (standing for proper actions and hyperbolic spaces). A (not necessarily finitely generated) group $G$ has \textit{property PH} if $G$ acts properly by isometries on a finite product of hyperbolic spaces equipped with the $\ell^1$-metric and the stabilizer of each factor space acts coboundedly on this factor space (see Definition \ref{Def: PH}). For finitely generated groups, property PH is a generalization of property QT (see Lemma \ref{Lem: QTImpPH}), and their relationship is described in detail in Subsection \ref{subsec: PHintro} below.

Our main result characterizes when these properties are preserved under a central extension:

\begin{theorem}\label{MainThm1}
    For the central extension (\ref{Extension}), $E$ has property PH (resp. QT) if and only if $Z$ is finitely generated, $G$ has property PH (resp. QT), and the Euler class is bounded.
\end{theorem}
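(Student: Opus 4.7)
The plan is to handle the two directions of the (PH')-equivalence separately, with the sufficiency direction being constructive and the necessity direction being the substantially harder one; the (PH)-equivalence will then follow by passing to a finite-index subgroup and observing that the bounded-Euler-class condition is preserved under restriction to a finite-index subgroup of $G$.

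For sufficiency, I would assume that $G$ acts properly and coboundedly on a product $\prod_j Y_j$ of hyperbolic spaces and that the Euler class $[\omega] \in H^2(G,Z)$ is bounded. Pulling back the $G$-action via $\pi$ makes $E$ act coboundedly on each $Y_j$, but with $Z$ acting trivially. To restore properness on $Z$, I would invoke Theorem \ref{IntroThm: PHFromLineal}: a finite-index subgroup $Z_0 \cong \Z^r$ acts properly on $r$ quasi-lines via orientable lineal actions coming from characters $\phi_1,\dots,\phi_r \colon Z_0 \to \R$. Since $[\omega]$ is bounded in $H^2(G,Z)$, each pushforward $\phi_{k,*}[\omega] \in H^2(G,\R)$ lies in the image of the comparison map, so $\phi_k$ extends to a quasi-morphism $\tilde\phi_k$ on a finite-index subgroup $E_0 \le E$ via the standard ``section-plus-correction'' formula $\tilde\phi_k(e) = \phi_k(e\cdot s(\pi(e))^{-1})$ for a set-theoretic section $s$. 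Each quasi-morphism then produces an orientable lineal action of $E_0$ on a quasi-line $L_k$ (built from a Cayley graph of $E_0$ with generators drawn from a bounded sublevel set of $\tilde\phi_k$, in the spirit of \cite{ABO19}). The diagonal action of $E_0$ on $\prod_j Y_j \times \prod_k L_k$ is then proper, giving $E$ property (PH).

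For necessity, assume $E$ acts properly and coboundedly on $\prod_i X_i$. The first step is a factor-wise analysis of how the normal abelian subgroup $Z$ acts on each $X_i$; since $Z$ is amenable, Gromov's classification leaves only elliptic, horocyclic, lineal, or focal actions. Normality of $Z$ in $E$ forces the $Z$-fixed set in $\partial X_i$ to be $E$-invariant, which together with coboundedness of the $E$-action should rule out horocyclic and focal behaviour (possibly after passing to a finite-index subgroup of $E$), leaving each $Z$-action either elliptic or orientable lineal. On elliptic factors the action descends to $G$; on lineal factors, collapsing the $Z$-invariant quasi-axis produces a $G$-hyperbolic quotient. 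Assembling these yields a proper cobounded diagonal $G$-action on a product of hyperbolic spaces, i.e.\ property (PH') for $G$.

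To extract boundedness of $[\omega]$, I would read off from each $Z$-lineal factor $X_i$ an orientable character $\phi_i \colon Z \to \R$, then use the pair of $Z$-fixed endpoints in $\partial X_i$ (preserved setwise by $E$) to build, via a Busemann-type cocycle at these boundary points, a quasi-morphism $\tilde\phi_i \colon E \to \R$ extending $\phi_i$. Computing the defect of $\tilde\phi_i$ against a set-theoretic section of $\pi$ then produces a bounded $\R$-valued 2-cocycle representing $\phi_{i,*}[\omega]$. Since a full rank collection of such $\phi_i$ jointly separates a finite-index subgroup $Z_0 \cong \Z^r$ of $Z$ (forced by properness of the $E$-action), combining the bounded representatives yields a bounded representative of $[\omega]$ itself, viewed as an element of $H^2(G, Z_0)\otimes\R$ and hence, by finite-index considerations, of $H^2(G, Z)$. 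The main obstacle will be this last assembly step, as well as the preliminary dynamical reduction that excludes horocyclic and focal $Z$-actions while keeping the induced $G$-action cobounded.
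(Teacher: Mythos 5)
Your sufficiency direction and your derivation of boundedness of the Euler class are both in the spirit of the paper's argument (Lemma \ref{Lem: bddEuCor} and Corollary \ref{Eu_is_bdd} via Lemma \ref{Lem: bddEu}). One caveat: as written, your sufficiency argument passes to finite-index subgroups $Z_0$ and $E_0$ and therefore only delivers property (PH) for $E$, whereas the theorem also asserts the (PH')-version. This is repairable: extend each character $\phi_k$ to all of $Z$ (trivially on the torsion part) and use the fact that boundedness of the Euler class produces a quasimorphism on all of $E$ restricting to $\phi_k$ on $Z$ up to bounded error (this is the content of items (\ref{eq2})--(\ref{eq5}) of Lemma \ref{Lem: bddEu}), so no passage to finite index is needed. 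Your final ``assembly'' step from bounded $\R$-valued cocycles back to a bounded $Z$-valued cocycle, which you flag as the main obstacle, is exactly the rounding construction of Steps 1--3 in the proof of $(\ref{eq6})\Rightarrow(\ref{eq1})$ of Lemma \ref{Lem: bddEu}.

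The genuine gap is in the necessity direction, at the step ``on lineal factors, collapsing the $Z$-invariant quasi-axis produces a $G$-hyperbolic quotient.'' By Lemma \ref{Lem: CenterProperty}, a factor $X_i$ on which the central subgroup $Z$ has a loxodromic element is itself a quasi-line on which $Z$ acts coboundedly, so the quotient $X_i/Z$ is a \emph{bounded} space and contributes nothing; discarding or collapsing these factors and assembling only the elliptic-factor quotients does not give a proper $G$-action in general. Concretely, take $E=\Z^2$, $Z=\langle(0,1)\rangle$, $G=\Z$, with $E$ having (PH') from the two quasi-lines attached to the characters $(m,n)\mapsto m+n$ and $(m,n)\mapsto m-n$: the subgroup $Z$ is loxodromic on both factors, both quotients are bounded, and your assembly leaves $G$ acting on nothing, although $G=\Z$ plainly has (PH'). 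The missing idea (Step 1 of the proof of Proposition \ref{Prop: QuoHasWQT}) is that the lineal factors must be \emph{replaced} rather than collapsed: once the Euler class is known to be bounded, one has a quasi-splitting $s\colon G\to E$, and composing $s$ with the quasimorphisms $\chi_i\colon E\to\R$ arising from the lineal factors yields quasimorphisms $\chi_i\circ s$ on $G$ whose unbounded members give auxiliary orientable lineal hyperbolic structures $[S_i]\in\H_{\ell}^+(G)$; properness of the diagonal $G$-action on $\prod\G(G,S_i)\times\prod\bar X_i$ then rests on the displacement estimate comparing $\sum|\bar g|_{S_i}+\sum|\bar x_i-\bar g\bar x_i|$ with $\sum|x_i-s(\bar g)x_i|$. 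Without this construction, the implication from (PH') of $E$ to (PH') of $G$ is not established.
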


A key step to prove Theorem \ref{MainThm1} is to characterize central extensions with bounded Euler classes using quasimorphisms and lineal actions (see Proposition \ref{Prop: bddEu}). Briefly speaking, for a central extension with finitely generated kernel, boundedness of the Euler class is equivalent to quasi-splitting, quasi-retraction, virtual quasi-splitting, and the existence of orientable lineal actions detecting the central subgroup. Shortly before we finished this paper, part of this characterization was also obtained by Fournier-Facio, Mangioni, and Sisto independently (see \cite[Proposition 2.9]{FMS25}). Their main result is a parallel result of Theorem \ref{MainThm1} on hierarchical hyperbolicity.

Since any central extension of a hyperbolic group by a finitely generated abelian group has a bounded Euler class \cite[Theorem 3.1]{NR97}, it follows directly from Theorem \ref{MainThm1} that such a central extension has property PH. Moreover, in \cite{BBF21}, Bestvina--Bromberg--Fujiwara show that residually finite hyperbolic groups have property QT. Thus, we have the following theorem.

\begin{theorem}
    Any central extension of a hyperbolic group $G$ by a finitely generated abelian group has property PH. If moreover, $G$ is residually finite, then any central extension of $G$ by a finitely generated abelian group has property QT.
\end{theorem}

This result may also be proved by following the proof of \cite[Corollary 4.3]{HRSS24}.
We remind the readers that it is still an open conjecture that all hyperbolic groups are residually finite.

Let $\Sigma$ be a \emph{finite-type} surface, i.e., a closed surface with finitely many punctures.
Bestvina--Bromberg--Fujiwara \cite{BBF21} shows that the mapping class group $\mcg(\Sigma)$ has property QT. Since the capping sequences of mapping class groups are central extensions with bounded Euler classes, we deduce from Theorem \ref{MainThm1} that

\begin{theorem}\label{IntroThm: MCG}
    $\mcg(\Sigma)$ has property QT for any finite-type surface $\Sigma$ with boundary. In particular, braid groups have property QT.
\end{theorem}

With more effort, we also show that

\begin{theorem}\label{IntroThm: Multicurve}
    Let $\Sigma$ be a finite-type surface possibly with boundary. The stabilizer in $\mcg(\Sigma)$ of any multicurve on $\Sigma$ has property QT.
\end{theorem}

Another application is about the outer automorphism group $\out(G)$ of an one-ended torsion-free hyperbolic group $G$.
We prove that

\begin{theorem}\label{IntroThm: Out(G)}
    For any one-ended torsion-free hyperbolic group $G$, $\out(G)$ has property QT.
\end{theorem}

The group $\out(G)$ was initially studied by Rips and Sela \cite{RS94}. Shortly later, Sela \cite{Sel97} precisely described $\out(G)$ as a finite-index supergroup of a central extension of a direct product of mapping class groups of surfaces as follows. Also see \cite{Lev05} for a simpler proof using the canonical JSJ splitting of $G$.

\begin{theorem}\cite[Theorem 1.9]{Sel97}\label{IntroThm: CenSplitOfOUT(G)}
    Let $G$ be a torsion-free one-ended hyperbolic group. There is a central extension $$1\to \Z^n\to \out'(G)\to \prod_{i=1}^m\pmcg(\Sigma_i)\to 1$$ where $\out'(G)$ has finite index in $\out(G)$ and $\Sigma_i$ are finite-type surfaces.
\end{theorem}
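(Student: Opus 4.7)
The plan is to follow the JSJ-theoretic approach originated by Rips--Sela. First, I would invoke the canonical essential JSJ decomposition of $G$ (which exists because $G$ is torsion-free, one-ended, and hyperbolic): it presents $G$ as a graph of groups with infinite-cyclic edge groups whose vertex set partitions into rigid vertices $V_1$ and flexible quadratically hanging (QH) vertices $V_2$, each $v \in V_2$ satisfying $G_v \cong \pi_1(\Sigma_v)$ for a compact surface $\Sigma_v$. Because the decomposition is canonical, $\out(G)$ acts on the underlying finite graph through a finite quotient, and I would define $\out'(G)$ to be the finite-index subgroup of $\out(G)$ acting trivially on all the combinatorial data (vertices, edges, and orientations of edge groups).

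Next, I would construct the projection $\rho\colon \out'(G) \to \prod_{v \in V_2}\pmcg(\Sigma_v)$. Any $\phi \in \out'(G)$ admits a representative preserving each vertex group setwise, which produces restriction homomorphisms $\rho_v\colon \out'(G) \to \out(G_v)$. For $v \in V_1$, rigidity ensures that the subgroup of $\out(G_v)$ preserving all incident edge subgroups is finite, so after shrinking to a further finite-index subgroup (still called $\out'(G)$) one arranges $\rho_v$ to be trivial. For $v \in V_2$, $\rho_v$ lands in $\pmcg(\Sigma_v)$, and assembling the QH pieces yields the desired $\rho$.

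The heart of the proof is then to identify $K := \ker \rho$ and show that it is central in $\out'(G)$ and isomorphic to $\Z^n$ with $n = |E| - |V_1|$. By construction, every element of $K$ acts trivially on each vertex group, so $K$ is generated by the Dehn twists $\tau_e$ along the edges $e$ of the JSJ graph. These twists commute with all of $\out'(G)$, since each edge group is preserved up to inner automorphism, so $K$ is central and abelian. The rank calculation is the delicate point: there are $|E|$ candidate generators $\tau_e$, one per edge, and each rigid vertex $v \in V_1$ contributes a single linear relation expressing that a product of twists along the edges incident to $v$ is realised as an inner automorphism of $G_v$.

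The main obstacle is this last rank computation. One must verify that the $|V_1|$ relations coming from rigid vertices are linearly independent in the abelian group generated by $\{\tau_e\}_{e \in E}$, and that no further relations are imposed by QH vertices (since the boundary Dehn twists on each $\Sigma_v$ realise nontrivial elements of $\pmcg(\Sigma_v)$ and hence do not lie in $K$). The independence of the relations relies on the connectivity of the JSJ graph and on the precise control, coming from rigidity, over the image of $\out(G_v)$ on the incident edge groups for each $v \in V_1$; once this is established, the exact sequence in the statement drops out and the resulting extension is central by the commutation argument above.
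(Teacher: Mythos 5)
First, note that the paper does not prove this statement at all: it is quoted verbatim as \cite[Theorem 1.1]{Lev05}, so there is no in-paper argument to compare yours against; what follows measures your sketch against Levitt's actual proof.

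Your overall architecture is the right one (canonical JSJ decomposition, restriction maps with finite image on rigid pieces and values in mapping class groups on QH pieces, kernel equal to the group of Dehn twists, centrality from preservation of edge-group orientations). However, the rank computation --- which you correctly single out as the heart of the matter --- rests on a misreading of the decomposition. In Levitt's essential JSJ decomposition the vertex set has \emph{three} classes: $V_1$ is the set of vertices carrying a \emph{two-ended (infinite cyclic)} vertex group, $V_2$ is the set of QH surface vertices, and the rigid vertices form a third class; every edge joins a $V_1$-vertex to a non-$V_1$-vertex. The group of twists is generated by one copy of $Z_{G_v}(G_e)\cong\Z$ for each \emph{edge-end} (the maximal cyclic subgroup of the adjacent vertex group containing the edge group), so there are $2|\E|$ rank-one generators, not $|\E|$. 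The relations are of two kinds: one per edge (twisting by $(z,z^{-1})$ with $z$ in the edge group is inner), and one per vertex $v$ with $Z(G_v)\neq 1$ (conjugation by a central element of $G_v$ twists all incident edges simultaneously). Since a non-elementary torsion-free hyperbolic vertex group has trivial center, \emph{rigid vertices contribute no relations}; the $|V_1|$ relations come precisely from the cyclic vertices, whose groups are their own centers, giving $2|\E|-|\E|-|V_1|=|\E|-|V_1|$. Your count (one generator per edge, one relation per rigid vertex) reproduces the expression $|\E|-|V_1|$ only by a coincidence of symbols and fails on examples: for $G=A\ast_C B$ with $A,B$ rigid and $C$ maximal cyclic and malnormal, your bookkeeping yields $1-2<0$, whereas the canonical decomposition inserts a cyclic vertex for $C$, has two edges, and correctly yields $n=2-1=1$, matching the fact that the Dehn twist along $C$ generates an infinite cyclic group of twists. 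Separately, your sketch takes for granted the surjectivity of $\rho$ onto $\prod_{v\in V_2}\pmcg(\Sigma_v)$, i.e.\ that mapping classes of the QH pieces extend to automorphisms of $G$; this requires an argument (they can be realized fixing each boundary subgroup up to conjugacy) and should not be omitted.
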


As a direct corollary of Theorem \ref{IntroThm: Out(G)} and Theorem \ref{MainThm1}, we show that

\begin{corollary}\label{OutQT2}
    The central extension in Theorem \ref{IntroThm: CenSplitOfOUT(G)} has a bounded Euler class.
\end{corollary}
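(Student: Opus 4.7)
The plan is to apply Theorem \ref{MainThm2} directly to the central extension furnished by Theorem \ref{IntroThm: CenSplitOfOUT(G)}:
\[
1 \to \Z^n \to \out'(G) \to \prod_{v \in V_2}\pmcg(\Sigma_v) \to 1.
\]
First I would verify the hypotheses: the kernel $\Z^n$ is obviously finitely generated, and the quotient is a finite direct product of pure mapping class groups of finite-type surfaces, hence itself finitely generated. So Theorem \ref{MainThm2} applies to this extension.

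Next I would check that both ends of the extension already have property (QT). For the total group $\out'(G)$, the corollary immediately preceding the statement asserts that $\out(G)$ has property (QT); since $\out'(G)$ has finite index in $\out(G)$ and property (QT) is a commensurability invariant (as recalled in the introduction from \cite{BBF21}), $\out'(G)$ inherits (QT). For the quotient $\prod_{v \in V_2}\pmcg(\Sigma_v)$, Corollary \ref{Cor: MCGHasQT} gives each $\mcg(\Sigma_v)$ property (QT), whence each finite-index subgroup $\pmcg(\Sigma_v)$ does as well, and finite direct products of groups with (QT) still have (QT) (just take the diagonal action on the concatenated list of quasi-tree factors coming from the individual groups).

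Having established that both $\out'(G)$ and $\prod_{v \in V_2}\pmcg(\Sigma_v)$ have property (QT), the forward direction of the ``if and only if'' in Theorem \ref{MainThm2} forces the Euler class of this central extension to be bounded, which is exactly the claim. Because this is a purely formal deduction from results already established in the paper, I do not anticipate any substantive obstacle; the only care needed is to invoke commensurability invariance of (QT) when passing from $\out(G)$ to the finite-index subgroup $\out'(G)$, and to observe that the direct product structure of the quotient is compatible with property (QT).
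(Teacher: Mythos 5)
Your proposal is correct and follows essentially the same route as the paper: the paper deduces boundedness of the Euler class by applying Theorem \ref{MainThm2} to the extension of Theorem \ref{IntroThm: CenSplitOfOUT(G)}, using the immediately preceding corollary that $\out(G)$ (hence its finite-index subgroup $\out'(G)$, by commensurability invariance) has property (QT). Your additional verification that the quotient $\prod_{v\in V_2}\pmcg(\Sigma_v)$ has (QT) is harmless but not needed, since the forward direction of the ``if and only if'' only requires that the total group $\out'(G)$ have property (QT).
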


After this paper first appeared, Hadziosmanovic and Mangioni \cite{HM26} independently established Corollary \ref{OutQT2}. We both rely on Levitt's description of $\out(G)$ (see Theorem \ref{IntroThm: CenSplitOfOUT(G)}). While their proof tracks boundedness directly through this description, we deduce it from property QT. Moreover, they extend the result to one-ended hyperbolic groups with torsion, where the JSJ decomposition involves orbifolds and it has to be worked out that the relevant extension is central. However, our QT result is still not covered by their approach.

\subsection{Property PH}\label{subsec: PHintro}

We now describe property PH and its relationship with property QT in more detail. In \cite{BBF21}, Bestvina--Bromberg--Fujiwara showed that both residually finite hyperbolic groups and mapping class groups of finite-type surfaces (without boundary) have property QT. More examples of property QT include Coxeter groups \cite{DJ99} and most 3-manifold groups \cite{HNY25}. From the perspective of hierarchically hyperbolic group (HHG) theory, Hagen and Petyt \cite{HP22} provide a sufficient condition for an HHG to have property QT. Most of these results have been generalized to a class of relatively hierarchically hyperbolic groups (RHHG) by the first author in \cite{Tao26}.

In this paper, we introduce property PH as a generalization of property QT that retains its hyperbolic features while allowing focal factors. Yang and the second author \cite{WY25} proved that if a finitely generated group $G$ has property PH and each factor action satisfies the shadowing property (see \cite[\textsection 2.2]{WY25} for the definition), then $G$ has locally uniform exponential growth. Motivated by this result, we hope to find more interesting examples of property PH and study its relationship with property QT. A key difference from property QT is that a finitely generated group with property PH may contain distorted elements, while a finitely generated group with property QT cannot (cf. \cite[Lemma 2.5]{HNY25}). This distinction is essential for solvable Baumslag--Solitar groups and Sol $3$-manifold groups. In particular, we show that property QT is strictly stronger than property PH by proving:

\begin{proposition}\label{IntroProp: PHButNotQT}
    ~
    \begin{enumerate}
        \item Solvable Baumslag-Solitar groups $BS(1,n)=\langle a,t\mid tat^{-1}=a^n\rangle$, where $n\ge 2$, have property PH but do not have property QT.
        \item Fundamental groups of Anosov mapping tori have property PH but do not have property QT.
    \end{enumerate}
\end{proposition}

As we know, property QT is a commensurability invariant \cite{BBF21} but not a quasi-isometry invariant \cite{But25}. Following the same proof in \cite{BBF21} using induced actions, property PH is also a commensurability invariant, but it is unknown whether it is a quasi-isometry invariant (Question \ref{Que: PHQIInv}). Following \cite{Tao26}, a finitely generated group $G$ has \textit{property QT$_0$} if it has property QT from a diagonal action. Similarly, we say a group $G$ has \textit{property PH$_0$} if it has property PH from a diagonal action (see Definition \ref{Def: PH}).
By a result of Button \cite[Theorem 6.1]{But25}, a group with property QT or PH virtually has property QT$_0$ or PH$_0$, respectively. We show that property PH$_0$ and QT$_0$ are strictly stronger than property PH and QT respectively, and thus they are not commensurability invariants by proving the following.

\begin{proposition}\label{IntroProp: QTButNotPH0}
    The (3,3,3)-triangle group $T=\langle a, b, c\mid  a^2=b^2=c^2=(ab)^3=(bc)^3=(ac)^3=1\rangle$ has property QT but does not have property PH$_0$.
\end{proposition}

\begin{figure}
    \centering
\tikzset{every picture/.style={line width=0.75pt}} 

\begin{tikzpicture}[x=0.75pt,y=0.75pt,yscale=-0.8,xscale=0.8]

\draw  [color={rgb, 255:red, 0; green, 0; blue, 0 }  ,draw opacity=1 ][fill={rgb, 255:red, 155; green, 155; blue, 155 }  ,fill opacity=0.5 ] (133,146.5) .. controls (133,87.13) and (223.66,39) .. (335.5,39) .. controls (447.34,39) and (538,87.13) .. (538,146.5) .. controls (538,205.87) and (447.34,254) .. (335.5,254) .. controls (223.66,254) and (133,205.87) .. (133,146.5) -- cycle ;
\draw  [fill={rgb, 255:red, 80; green, 227; blue, 194 }  ,fill opacity=0.5 ] (154,147.5) .. controls (154,105.8) and (205.93,72) .. (270,72) .. controls (334.07,72) and (386,105.8) .. (386,147.5) .. controls (386,189.2) and (334.07,223) .. (270,223) .. controls (205.93,223) and (154,189.2) .. (154,147.5) -- cycle ;
\draw  [fill={rgb, 255:red, 184; green, 233; blue, 134 }  ,fill opacity=0.5 ] (288,146.5) .. controls (288,104.8) and (339.93,71) .. (404,71) .. controls (468.07,71) and (520,104.8) .. (520,146.5) .. controls (520,188.2) and (468.07,222) .. (404,222) .. controls (339.93,222) and (288,188.2) .. (288,146.5) -- cycle ;
\draw  [fill={rgb, 255:red, 248; green, 231; blue, 28 }  ,fill opacity=0.5 ] (304.75,146.5) .. controls (304.75,129.52) and (318.52,115.75) .. (335.5,115.75) .. controls (352.48,115.75) and (366.25,129.52) .. (366.25,146.5) .. controls (366.25,163.48) and (352.48,177.25) .. (335.5,177.25) .. controls (318.52,177.25) and (304.75,163.48) .. (304.75,146.5) -- cycle ;

\draw (321,51) node [anchor=north west][inner sep=0.75pt]   [align=left] {PH};
\draw (219,94) node [anchor=north west][inner sep=0.75pt]   [align=left] {PH$_0$};
\draw (412,88) node [anchor=north west][inner sep=0.75pt]   [align=left] {QT};
\draw (322,124) node [anchor=north west][inner sep=0.75pt]   [align=left] {QT$_0$};

\end{tikzpicture}
    \caption{The inclusion relationship between properties PH, PH$_0$, QT and QT$_0$.}
    \label{Fig: Inclusion}
\end{figure}
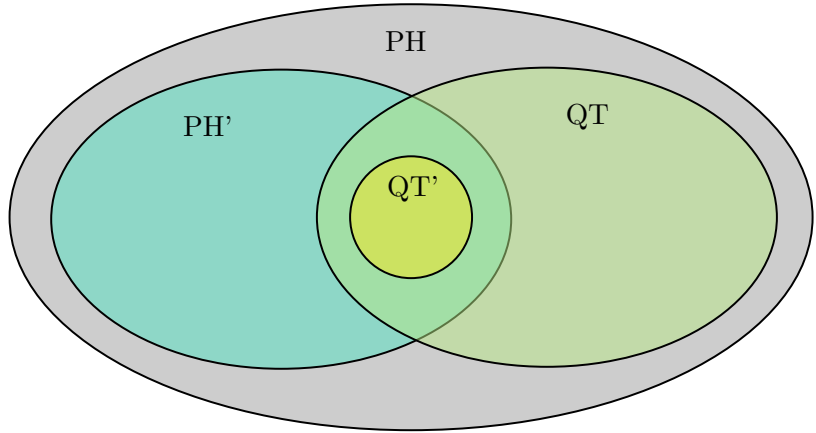

Figure \ref{Fig: Inclusion} illustrates the relations between properties PH, PH$_0$, QT and QT$_0$. To prove Proposition \ref{IntroProp: QTButNotPH0}, we study thoroughly the groups with these properties from lineal actions in Section \ref{sec: PHLineal}. 

Let $M$ be a connected, compact, orientable 3-manifold. As shown in \cite[Theorem 1.1]{HNY25}, the fundamental group $\pi_1(M)$ has property QT if and only if no summand in the sphere-disk decomposition of $M$ supports either Sol or Nil geometry. However, as mentioned in Proposition \ref{IntroProp: PHButNotQT}, fundamental groups of Anosov mapping tori have property PH. In fact, we prove the following. 

\begin{theorem}\label{IntroThm: 3Manifold}
    Let $M$ be a connected, compact, orientable 3-manifold. If no summand in the sphere-disk decomposition of $M$ supports Nil geometry, then $\pi_1(M)$ has property PH.
\end{theorem}

We remark that there are also many counterexamples of property PH or PH$_0$. In \cite{BFFG24}, a group $G$ is said to have \textit{property (NL)} if no isometric actions of $G$ on a hyperbolic space admits a loxodromic element. Moreover, if all of the finite-index subgroups of $G$ have property (NL), $G$ is said to have \textit{property hereditary (NL)}. The action of a group with property (NL) on any hyperbolic space is either elliptic or horocyclic. Thus, any infinite group with property (NL) does not have property PH$_0$. Moreover, any infinite group with property hereditary (NL) does not have property PH. We refer the readers to \cite{BFFG24} for examples of groups satisfying property (hereditary) (NL). After this paper first appeared, Balasubramanya, Niebler and Shapiro \cite{BNS26} characterized which Coxeter groups have property (NL). Since every finitely generated Coxeter group has property QT \cite{DJ99}, the existence of infinite Coxeter groups with property (NL) also implies that neither property (NL) nor PH$_0$ is a commensurability invariant. 

We also remark that there are some other literature to study isometric group actions on finite products (with $\ell^1$ or $\ell^2$ metric) of hyperbolic spaces. For example, Button \cite{But25} gave a criterion for a generalized Baumslag-Solitar group to virtually have a \textit{product acylindrical action}, i.e. an acylindrical diagonal action on a finite $\ell^1$-product of hyperbolic spaces. After that, Balasubramanya and Fernos \cite{BF25} systematically studied \textit{(AU)-acylindrical actions} on a finite $\ell^2$-product of hyperbolic spaces with general type factors to unify both notions of S-arithmetic lattices and acylindrically hyperbolic groups.

\subsection*{Structure of the paper}
The paper is organized as follows. Section \ref{sec: preliminary} is devoted to recalling some preliminary materials about Gromov-hyperbolic spaces, group actions, quasimorphisms, and central extensions. In Section \ref{sec: PHLineal}, we show that property QT implies PH and study the groups with either property from lineal actions. In Section \ref{sec: FirstExample}, we study some representative examples and prove Proposition \ref{IntroProp: PHButNotQT}, Proposition \ref{IntroProp: QTButNotPH0}, and Theorem \ref{IntroThm: 3Manifold}. In Section \ref{Sec: CentQH}, we provide several equivalent conditions for a central extension to have a bounded Euler class. As a result, we prove part of Theorem \ref{MainThm1} there. In Section \ref{Sec: PHUnderQuotient}, we construct hyperbolic spaces for central quotients and complete the proof of Theorem \ref{MainThm1}. This leads to applications to mapping class groups and outer automorphism groups of torsion-free one-ended hyperbolic groups. At the end, we collect some open questions about properties PH and QT in Section \ref{sec: OpenQues}.

\subsection*{Acknowledgments}
The authors are grateful to Wenyuan Yang for raising the question of how properties QT and PH behave under central extensions and for giving many helpful suggestions on the first draft of this paper. B. Tao was supported by JST SPRING, Grant Number JPMJSP2110. R. Wan was supported by NSFC No.12471065 \& 12326601 and in part by Science and Technology Commission of Shanghai Municipality (No. 22DZ2229014).

\section{Preliminary}\label{sec: preliminary}
\subsection{Gromov-hyperbolic spaces and group actions}\label{subsec: HypSpace}

Let $X,Y$ be metric spaces. Given constants $\lambda\ge 1$, $\epsilon\ge 0$, a map $f:X\to Y$ is called 

\begin{enumerate}
\item \textit{$\epsilon$-cobounded} if for any $y\in Y$, there exists $x\in X$ with $|f(x)-y|\le \epsilon$.
\item \textit{$(\lambda, \epsilon)$-quasi-isometric embedding} if for any $x,y\in X$, 
    \[
        \frac{1}{\lambda}|x-y|-\epsilon\le |f(x)-f(y)|\le \lambda|x-y|+\epsilon.
    \]
\item \textit{$(\lambda, \epsilon)$-quasi-isometry} if $f$ is a $(\lambda, \epsilon)$-quasi-isometric embedding and $\epsilon$-cobounded. When such a map exists, the two spaces $X$ and $Y$ are said to be \textit{quasi-isometric.} 
\end{enumerate}

Let $(X, |\cdot|)$ be a geodesic metric space. For any two points $x,y\in X$, denote $[x,y]$ as  a choice of a geodesic segment between $x$ and $y$. A \textit{geodesic triangle} $\Delta=\Delta(x,y,z)$ in $X$ consists of three points $x, y, z\in X$ and three geodesic segments $[x, y], [y, z]$ and $ [z, x]$.

A geodesic metric space $X$ is called \textit{(Gromov) $\delta$-hyperbolic} for a constant $\delta \geq 0$ if every geodesic triangle in $X$ is \textit{$\delta$-thin}: each of its sides is contained in the $\delta$-neighborhood of the union of the other two sides. A geodesic metric space is called \textit{hyperbolic} if it is $\delta$-hyperbolic for some $\delta\ge 0$.

A \textit{$(\lambda, \epsilon)$-quasi-geodesic} in a metric space $X$ is the image of a 
 $(\lambda, \epsilon)$-quasi-isometric embedding $c: I \rightarrow X$, where $I$ is an interval (possibly bounded or unbounded). For simplicity, a $(\lambda,\lambda)$-quasi-geodesic will be referred to as a \textit{$\lambda$-quasi-geodesic}.

Let $X$ be a  hyperbolic space. Two quasi-geodesic rays in $X$ are said to be \textit{asymptotic} if the Hausdorff distance between them is finite. Being asymptotic is an equivalence relation on quasi-geodesic rays. The \textit{Gromov boundary} of $X$, denoted by $\partial X$, is defined to be the set of equivalence classes of quasi-geodesic rays in $X$. 
Suppose that a group $G$ acts isometrically on a hyperbolic space $X$ with a basepoint $o$. The \textit{limit set} of $G$ is defined by $\Lambda G=\overline{Go}\cap \partial X$, and is independent of the choice of the basepoint.

By Gromov \cite{Gro87}, the isometries of a hyperbolic space $X$ can be divided into three classes. A nontrivial element $g\in \Isom(X)$ is called \textit{elliptic} if some $\langle g\rangle$-orbit is bounded. Otherwise, it is called \textit{loxodromic} (resp. \textit{parabolic}) if it has exactly two fixed points (resp. one fixed point), denoted by $g^+,g^-$, in the Gromov boundary $\partial X$ of $X$. 

An isometric group action of $G$ on a metric space $X$ is called \textit{metrically proper} if for any $x\in X$ and any $R>0$, the set $\{g\in G\mid |x-gx|\le R\}$ is finite. For brevity, we will write proper actions to mean metrically proper actions in this paper. A proper and cobounded group action will be referred to as a \textit{geometric} action.

Following \cite[Lemma 3.6]{ABO19}, we say two isometric actions $G\curvearrowright X$ and $G\curvearrowright Y$ are \textit{weakly equivalent} if there exists a coarsely $G$-equivariant quasi-isometry from a $G$-orbit in $X$ to a $G$-orbit in $Y$.

According to Gromov \cite{Gro87}, all isometric group actions on hyperbolic spaces can be classified into the following five types:
\begin{enumerate}
   \item \textit{elliptic} if the orbit is bounded;
   \item \textit{horocyclic} if it has no loxodromic element;
   \item \textit{lineal} if it has a loxodromic element and any two loxodromic elements have the same fixed points in Gromov boundary;
   \item \textit{focal} if it has a loxodromic element, is not lineal and any two loxodromic elements have one common fixed point;
   \item \textit{of general type} if it has two loxodromic elements with no common fixed point.
\end{enumerate}
Moreover, a lineal action is called \textit{orientable} if no group element permutes the two limit points of this action. 

A \textit{quasi-line} (resp. \textit{quasi-tree}) is a metric space quasi-isometric to $\R$ (resp. a tree) equipped with the path metric. In this paper, we assume each quasi-line $L$ to be oriented by choosing an ordering of its limit points $L^+,L^-$. It is easy to see that the orbit of a lineal action is a quasi-line. 

\begin{lemma}\cite[Corollary 3.6]{Man06}
    Any isometry on a quasi-tree is either elliptic or loxodromic.
\end{lemma}

\subsection{Quasimorphisms and lineal actions}

Let $G$ be a group. A function $\phi:G\to \R$ is called a \emph{quasimorphism} if its \emph{defect} 
        \[D(\phi):=\sup_{g,h\in G} |\phi(g)+\phi(h)-\phi(gh)|\] 
        is finite. 
    A quasimorphism $\phi$ on $G$ is \emph{homogeneous} if $\phi(g^n)=n\phi(g)$ for any $n\in \mathbb Z$ and any $g\in G$.

    \begin{remark}\label{Rmk: HomoVersion}
        It is well-known \cite[Lemma 2.21]{Cal09} that the homogenization of a quasimorphism $\phi$ defined by
        $$\varphi(g)=\lim_{n\rightarrow \infty}\frac{\phi(g^n)}{n}$$
        is a homogeneous quasimorphism, so that $\|\phi-\varphi\|_\infty$ is bounded above by $D (\phi)$. Moreover, we have an estimate $D (\varphi)\le 2D (\phi)$. 
        A homogeneous quasimorphism $\varphi$ is a class function: $\varphi(g)=\varphi(hgh^{-1})$ for any $h, g\in G$. This implies that $|\varphi([g,h])|\le D (\varphi)$ for any $g, h\in G$. 
    \end{remark}

    The following notion is a natural generalization of quasimorphisms. 
    \begin{defn}\cite{FK16}\label{Def: QH}
        Let $G$ be a group and $H$ be a group equipped with a proper left-invariant metric. Let $\phi: G \to H$ be a set-theoretic map. Define $\Delta(\phi) \subseteq H$, the \emph{defect set} of $\phi$, by $$\Delta(\phi)=\{\phi(h)^{-1}\phi(g)^{-1}\phi(gh): g,h\in G\}.$$
        The map $\phi: G\to H $ is called a \emph{quasi-homomorphism} if its defect set $\Delta(\phi)$ is finite. 
    \end{defn}

    \begin{remark}\label{Qhom}
        One may also define the defect set of $\phi$ by $\bar \Delta(\phi)=\{\phi(g)\phi(h)\phi(gh)^{-1}: g,h\in G\}.$ This change does not affect the definition of quasi-homomorphisms (see \cite[Proposition 2.3]{Heu20}).
    \end{remark}

    A useful fact about extending quasi-homomorphisms from a finite-index subgroup is the following result, which is clear by the proof of \cite[Lemma 3.1]{Ish14}.
    \begin{lemma}\label{Lem: TransferMap}
        Let $H$ be a finite-index normal subgroup of $G$. Then any quasi-homomorphism $\phi: H\to \R^n$ induces a quasi-homomorphism $\hat \T(\phi): G\to \R^n$ whose restriction on $H\cap Z(G)$ is equal to $\phi$.
    \end{lemma}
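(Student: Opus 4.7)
The plan is to adapt the classical group-theoretic transfer (Verlagerung) to the quasi-homomorphism setting. First I fix a set of left coset representatives $g_1=1,g_2,\ldots,g_k$ for $G/H$, where $k=[G:H]$. For each $g\in G$ I define a permutation $\sigma_g$ of $\{1,\ldots,k\}$ together with elements $h_i(g)\in H$ by the unique decomposition $gg_i=g_{\sigma_g(i)}h_i(g)$; normality of $H$ is precisely what guarantees that $h_i(g)$ lies in $H$. I then set
\[
\hat\T(\phi)(g):=\frac{1}{k}\sum_{i=1}^{k}\phi(h_i(g)).
\]
The division by $k$ is legitimate because the target is the divisible group $\R^n$, and is exactly what makes the restriction to $H\cap Z(G)$ come out as $\phi$ rather than $k\phi$.

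The main technical step is to verify that $\hat\T(\phi)$ is a quasi-homomorphism. Writing $(gg')g_i$ in two different ways, and substituting $g'g_i=g_{\sigma_{g'}(i)}h_i(g')$ into $g(g'g_i)$, yields the cocycle relations $\sigma_{gg'}=\sigma_g\circ\sigma_{g'}$ and $h_i(gg')=h_{\sigma_{g'}(i)}(g)\cdot h_i(g')$. Applying $\phi$ to each $h_i(gg')$ introduces a bounded error coming from the (bounded) defect set $D(\phi)$, and after summing over $i$ the permutation $\sigma_{g'}$ can be absorbed by reindexing $\sum_i\phi(h_{\sigma_{g'}(i)}(g))=\sum_j\phi(h_j(g))$. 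The net result is that $\hat\T(\phi)(gg')-\hat\T(\phi)(g)-\hat\T(\phi)(g')$ is the average of $k$ elements of $D(\phi)$, hence bounded.

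The restriction statement is then essentially automatic: for $z\in H\cap Z(G)$, normality gives $zg_i\in Hg_i=g_iH$ so $\sigma_z=\mathrm{id}$, while centrality gives $h_i(z)=g_i^{-1}zg_i=z$, so the average collapses to $\hat\T(\phi)(z)=\phi(z)$. The only real subtlety I expect is keeping the left/right coset conventions consistent across the cocycle computation; once these are pinned down, everything reduces to the bookkeeping already carried out in \cite[Lemma 3.1]{Ish14}, transcribed into the additive notation appropriate for $\R^n$-valued quasi-homomorphisms.
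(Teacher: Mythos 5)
Your proof is correct and is precisely the transfer (averaging over coset representatives) argument of \cite[Lemma 3.1]{Ish14} that the paper itself invokes for this lemma. One minor point: $h_i(g)=g_{\sigma_g(i)}^{-1}gg_i\in H$ already follows from the definition of left cosets, so normality is not actually what guarantees this (and indeed normality plays no essential role anywhere in the argument); this misattribution does not affect the validity of the proof.
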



We introduce the following notion for an orientable lineal action. 

\begin{defn}[Oriented quasimorphism]\label{Def: TranslationNumber}
    Let $G$ be a group that acts orientably and lineally on a quasi-line $L$. An \emph{$L$-oriented} quasimorphism is a nonzero homogeneous quasimorphism $\varphi:G\to \R$ such that
    \begin{itemize}
        \item $\varphi(g)>0$ if $g$ is loxodromic with $g^+=L^+$ and $g^-=L^-$;
        \item $\varphi(g)<0$ if $g$ is loxodromic with $g^+=L^-$ and $g^-=L^+$;
        \item $\varphi(g)=0$ if $g$ is elliptic.
    \end{itemize}
\end{defn}

The following lemma is known to experts and serves as a bridge between lineal actions and quasimorphisms.

\begin{lemma}\label{Lem: TFAE}
    Let $G$ be a group. For any orientable and lineal $G$-action on a quasi-line $L$, there exists at least one $L$-oriented quasimorphism associated to this action.

    Conversely, any nonzero homogeneous quasimorphism $\varphi: G\to \R$ can be realized as an $L$-oriented quasimorphism for some quasi-line $L$.
\end{lemma}
\begin{proof}
    For the forward direction, the construction of a canonical $L$-oriented quasimorphism (called the \emph{Busemann quasimorphism} or \emph{Busemann pseudocharacter}) is given in \cite[\S 4]{Man08} (see also \cite[Proposition 3.7]{CCMT15}). 
    The converse was previously known to Manning \cite[Proposition 3.1, Proposition 5.5]{Man06} and also appears as \cite[Lemma 4.15]{ABO19}.
\end{proof}

\begin{remark}\label{Rmk: STL}
    The \textit{stable translation length} of an isometry $g$ on $L$ is defined by $$\tau(g):=\lim_{n\to \infty}\frac{d(x,g^nx)}{n}$$ for any basepoint $x\in L$. It is not hard to see from the construction of Busemann quasimorphisms (cf. \cite[\S 4]{Man08}, \cite[Proposition 3.7]{CCMT15}) that $|\beta(g)|=\tau(g)$, where $\beta$ is the Busemann quasimorphism.
\end{remark}

\begin{lemma}\label{Lem: UniformBounded}
Suppose that a group $G$ has an orientable lineal action on a hyperbolic space $X$ with a basepoint $o\in X$. Then there exists $C>0$ such that for any $g,h\in G$, $|o-[g,h]o|\leq C$.
\end{lemma}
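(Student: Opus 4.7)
The plan is to extract a homogeneous quasimorphism $\varphi\colon G\to\R$ out of the orientable lineal action of $G$ on $X$ and then to exploit the fact, recorded in Remark~\ref{Rmk: HomoVersion}, that homogeneous quasimorphisms are uniformly bounded on commutators. Since the action is lineal and orientable, $G$ fixes pointwise the two boundary points $\xi^{\pm}\in\partial X$. Applying the Busemann construction at $\xi^+$ (as in the proof of Lemma~\ref{Lem: TFAE}, ``(\ref{LH})~$\Rightarrow$~(\ref{HQ})'', via \cite[Proposition~3.7]{CCMT15}) and homogenizing as in Remark~\ref{Rmk: HomoVersion} produces a homogeneous quasimorphism $\varphi\colon G\to\R$. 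Remark~\ref{Rmk: HomoVersion} then gives $|\varphi([g,h])|\le\|D(\varphi)\|$ for every $g,h\in G$.

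The main step is to convert this quasimorphism bound into a geometric bound on the displacement $|o-[g,h]o|$. I would fix a loxodromic element $g_0\in G$ (which exists as the action is lineal), choose an axis $L_0\subseteq X$ of $g_0$ joining $\xi^-$ to $\xi^+$, and set $d_0=|o-L_0|$ with $x_0\in L_0$ a closest point to $o$. Morse-type stability of quasi-geodesics in the hyperbolic space $X$ ensures that for every $k\in G$ the translate $kL_0$, being a bi-infinite quasi-geodesic with the same pair of endpoints $\xi^{\pm}$, lies within a uniform Hausdorff distance $C_1$ of $L_0$. Hence $G$ quasi-acts on the coarsely invariant quasi-line $L_0$ by an orientable lineal quasi-action, and the ``Moreover'' clause of Lemma~\ref{Lem: TFAE} quasi-conjugates this quasi-action to $\rho(g,t)=t+\varphi(g)$ on $\R$. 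This yields a constant $C_2$ with $|x_0-kx_0|\le|\varphi(k)|+C_2$ for every $k\in G$, and the triangle inequality then gives
\[
|o-ko|\ \le\ 2d_0+|x_0-kx_0|\ \le\ 2d_0+|\varphi(k)|+C_2.
\]
Specializing to $k=[g,h]$ and combining with $|\varphi([g,h])|\le\|D(\varphi)\|$ produces the desired uniform bound $C:=2d_0+\|D(\varphi)\|+C_2$.

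The main obstacle I anticipate lies in making precise, in the middle step, that every $k\in G$ indeed acts on a uniform neighborhood of $L_0$ by an isometry quasi-conjugate to translation by $\varphi(k)$ on $\R$. For loxodromic $k$ this is standard, since $|\varphi(k)|$ records the translation length along the common axis. For elliptic $k$, where $\varphi(k)=0$, one must instead argue that such an element must coarsely fix $L_0$ pointwise; the displacement of any point at distance $d_0$ from $L_0$ is then bounded by a constant depending only on $d_0$ and the hyperbolicity constant $\delta$ of $X$. The orientability hypothesis is crucial here, since it rules out elements swapping the two ends $\xi^\pm$, which could otherwise contribute an unbounded displacement of order $2d_0$ summed over the group.
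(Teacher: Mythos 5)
Your proposal is correct and follows essentially the same route as the paper: extract a homogeneous quasimorphism from the orientable lineal action via Lemma \ref{Lem: TFAE}, bound it on commutators by Remark \ref{Rmk: HomoVersion}, and transfer that bound back to displacements through the quasi-conjugacy with the translation quasi-action on $\R$. The paper sidesteps your detour through a loxodromic quasi-axis and the attendant worry about elliptic elements by restricting to the orbit $G\cdot o$, which is already a $G$-invariant quasi-line, so the ``Moreover'' clause of Lemma \ref{Lem: TFAE} applies directly and the resulting $(1,\epsilon)$-quasi-conjugacy bounds $|o-ko|$ by $|\varphi(k)|$ plus a uniform constant for every $k\in G$ at once.
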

\begin{proof}
    When restricted to the orbit of the $G$-action in $X$, we know that $G$ has an orientable lineal action on a quasi-line. According to Lemma \ref{Lem: TFAE} and the above remark, there exists a homogeneous quasimorphism $\beta: G\to \R$ such that $|\beta(g)|$ measures the stable translation length of $g$ in $X$. As shown in Remark \ref{Rmk: HomoVersion}, $|\beta([g,h])|\le D(\beta)$ for any $g,h\in G$.

    If $[g,h]$ is a loxodromic element in $X$, then the conclusion follows from \cite[Proposition 10.6.4]{CDP06}. If $[g,h]$ is an elliptic element in $X$, then the conclusion follows from \cite[Corollary 6.7]{Osi16} and the cobounded action $G\curvearrowright G\cdot o$.
\end{proof}

\begin{remark}
    We caution the readers that the conclusion of Lemma \ref{Lem: UniformBounded} depends heavily on the orientability. For example, let $G=\langle r, s\mid s^2=(rs)^2=1\rangle$ be the infinite dihedral group and $S=\{r,s\}$. Then the $G$-action on the Cayley graph $\G(G,S)$ is non-orientable lineal. It is direct to compute that $|[r^n,s]|_S=|r^{2n}|_S\to \infty$ as $n\to \infty$.
\end{remark}

\subsection{Central extensions}\label{subsec: CentralExtension}
Consider a short exact sequence of groups
\begin{equation}\label{Ext}
    1\to Z\xrightarrow{\iota}E\xrightarrow{\pi}G\to 1.
\end{equation}

\begin{defn}
    A map $s:G\to E$, not necessarily a homomorphism, is called a \textit{(set-theoretical) section} if $\pi\circ s= Id_G$. A section $s$ is called \textit{normalized} if $s(1)=1$. A normalized section $s$ is called a \textit{quasi-splitting} if $s$ is a quasi-homomorphism (see Definition \ref{Def: QH}). Following \cite{FK16}, we say that (\ref{Ext}) \textit{quasi-splits} if it admits a quasi-splitting. We also say that (\ref{Ext}) \textit{virtually quasi-splits} if there exists a finite-index subgroup $E'\le E$ such that the short exact sequence 
    $$1\to Z'\xrightarrow{\iota|_{Z'}} E'\xrightarrow{\pi|_{E'}}G'\to 1$$
    quasi-splits, where $Z'=Z\cap E'$ and $G'=\pi(E')$. Following \cite{Wan25}, a map $\phi:E\to Z$ is called a \textit{quasi-retraction}\footnote{This definition is different from that in \cite{Alo94}, where a quasi-retraction is defined metrically. } if $\phi$ is a quasi-homomorphism and $\phi|_Z=\mathrm{id}_Z$. 
\end{defn}

When $Z$ is a central subgroup of $E$, (\ref{Ext}) is called a \textit{central extension} of $G$. In this case, we also say that $E$ is a central extension of $G$ by $Z$ and that $G$ is a central quotient of $E$ by $Z$. In what follows, we assume $Z$ to be finitely generated.
Let $s: G \to E$ be a section. For $g_1, g_2 \in G$, the element
$s(g_1)s(g_2)s(g_1g_2)^{-1}$ lies in the kernel of $\pi$, hence in the image of $\iota$. Up to
identifying $\iota(Z)$ with $Z$ we may thus define the cochain $\omega_s \in C^2(G, Z)$ given by
$$\omega_s(g_1, g_2) = s(g_1)s(g_2)s(g_1g_2)^{-1}.$$ We say $\omega_s$ is \textit{bounded} if the set $\{\omega_s(g_1,g_2): g_1,g_2\in G\}$ is finite in $Z$.

Let us recall the following well-known facts (see e.g. \cite[Chapter 4]{Bro82}):
\begin{itemize}
    \item[(1)]  The cochain $\omega_s$ is a cocycle;
    \item[(2)]  If $s'$ is another section of $\pi$, then $\omega_{s'}$ is cobordant to $\omega_s$; therefore, the class $[\omega_s] \in H^2(G, Z)$ does not depend on the choice of $s$, and will be called the \textit{Euler class} of the extension;
    \item[(3)]  If $\omega'$ is any representative of the Euler class, there exists a section $s': G \to E$ such that $\omega'=\omega_{s'}$;
    \item[(4)]  The cocycle $\omega_s$ is normalized, in the sense that $\omega_s(1,g)=1=\omega_s(g,1)$, if and only if $s$ is normalized.
\end{itemize}
The Euler class of a central extension is called \textit{bounded} if it can be represented by a bounded cocycle.

\section{Property PH and lineal actions}\label{sec: PHLineal}


\begin{defn}\label{Def: PH}
    A group $G$ has \textit{property PH} if there exist finitely many hyperbolic spaces $X_1,\cdots,X_n$, such that $G$ acts properly on $\prod_{i=1}^nX_i$ with $\ell^1$-metric and the stabilizer $\stab_G(X_i)$ acts coboundedly on $X_i$ for each $i$. To specify the actions, we also say $G$ has \textit{property PH from actions on $X_1,\dots,X_n$} in this case. 
    
    If the above action $G\curvearrowright\prod_{i=1}^nX_i$ can be diagonal, then we say $G$ has \textit{property PH$_0$ (from actions on $X_1,\dots,X_n$)}. 
\end{defn}

In this paper, we use $\G(G,S)$ to denote the Cayley graph of a group $G$ with respect to a generating set $S$. To begin with, we show that property QT always implies property PH. 


\begin{lemma}\label{Lem: QTImpPH}
     Let $G$ be a finitely generated group. If $G$ has property QT$_0$ or QT, then $G$ has property PH$_0$ or PH, respectively.
\end{lemma}
\begin{proof}
    By \cite[Theorem 6.1]{But25}, a finitely generated group with property QT virtually has property QT$_0$. It suffices for us to show that property QT$_0$ implies property PH$_0$. Let $G$ be a finitely generated group with property QT$_0$. By definition, there exist finitely many quasi-trees $T_i(1\le i\le n)$ on which $G$ acts isometrically so that for any finite generating set $F$ of $G$ and $o\in X=\prod_{i=1}^nT_i$, the orbit map $\G(G,F)\to X, g\mapsto go$ is a quasi-isometric embedding. This quasi-isometric embedding of the orbit map implies that the diagonal action $G\curvearrowright X=\prod_{i=1}^nT_i$ is proper. By \cite[Remark 3.2]{Man06}, any isometric group action of a finitely generated group on a quasi-tree can be replaced by a quasi-tree on which the action is cobounded and weakly equivalent to the original action. This shows that $G$ has property PH$_0$. 
\end{proof}

\begin{remark}
    Button's result \cite[Theorem 6.1]{But25} is only stated for finite products of connected graphs. However, for any hyperbolic space $X$, we can construct a connected graph $Y$, called a \emph{Rips graph}, whose vertex set is $X$ and whose edges connect two points $x,y\in X$ whenever $|x-y|\le 1$. The graph $Y$ is quasi-isometric to $X$, and any action on $X$ passes to a weakly equivalent action on $Y$. Thus, Button's result also works for finite products of hyperbolic spaces. 
\end{remark}

If each $X_i$ in Definition \ref{Def: PH} is a quasi-line, then we say $G$ has \textit{property PH (or property PH$_0$ respectively) from lineal actions}. Since horocyclic actions cannot be cobounded and elliptic actions can be removed in a proper diagonal action, the simplest objects for us to study are those groups with property PH from lineal actions. 
For the rest of this section, we are going to describe the class of groups with property PH or PH$_0$ from lineal actions. 

In this paper, we say a virtually abelian group $G$ has \textit{rank} $r$ if $G$ contains a finite-index subgroup which is isomorphic to $\Z^r$. A group $G$ is called \textit{FC} if every conjugacy class of $G$ contains only finitely many elements. Note that a group with finitely many commutators is FC. A group $G$ is called \textit{FZ} if its center $Z(G)$ has finite-index in $G$. 


\begin{lemma}\label{Lem: OriLineal}
    Assume a group $G$ has property PH$_0$ from orientable lineal actions $G\curvearrowright L_1,\cdots, G\curvearrowright L_m$. Then $G$ is a finitely generated FZ group of rank $r\le m$. Moreover, there exists a subset $\{L_1',\cdots,L_r'\}\subseteq \{L_1,\cdots,L_m\}$ so that the diagonal action $G\curvearrowright \prod_{i=1}^rL_i'$ is geometric. 
\end{lemma}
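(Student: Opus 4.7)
My plan is to convert each orientable lineal action into a homogeneous quasimorphism and assemble them into a single quasi-homomorphism $\Phi\colon G\to\R^m$, then extract the algebraic structure of $G$ purely from the properness of $\Phi$. By Lemma~\ref{Lem: TFAE}, each action $G\curvearrowright L_i$ is quasi-conjugate to the cobounded quasi-action on $\R$ given by $(g,x)\mapsto x+\varphi_i(g)$ for some homogeneous quasimorphism $\varphi_i\colon G\to\R$, so $\Phi=(\varphi_1,\dots,\varphi_m)$ gives a quasi-conjugacy between the diagonal action on $\prod_{i=1}^m L_i$ and translation on $(\R^m,\ell^1)$. Properness translates to: $\Phi^{-1}(B)$ is finite for every bounded $B\subset\R^m$, and essentially every further claim will be extracted from this.

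The first step is to deduce $|[G,G]|<\infty$. Applying Lemma~\ref{Lem: UniformBounded} componentwise gives a uniform bound $\|\Phi([g,h])\|_1\le C$ for all $g,h\in G$, so the set of commutators lies in $\Phi^{-1}(\overline{B(0,C)})$ and is therefore finite; a classical theorem of B.\,H.~Neumann then upgrades this to $|[G,G]|<\infty$.

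Next I exploit that each $\varphi_i$ is homogeneous, hence vanishes on torsion elements, so it descends to a quasimorphism $G^{\mathrm{ab}}\to\R$ which is well defined up to bounded error because $[G,G]$ is finite; its homogenization is a genuine homomorphism on the abelian group $G^{\mathrm{ab}}$. Packaging these yields a homomorphism $\widetilde\Phi\colon G^{\mathrm{ab}}\to\R^m$ at bounded distance from (the almost-defined) $\Phi$, so properness forces $\ker\widetilde\Phi$ (the torsion of $G^{\mathrm{ab}}$) to be finite and $\widetilde\Phi(G^{\mathrm{ab}})$ to be a discrete subgroup of $\R^m$, hence isomorphic to $\Z^r$ for some $r\le m$. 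Thus $G^{\mathrm{ab}}$ is finitely generated, and combined with $|[G,G]|<\infty$ this forces $G$ itself to be finitely generated. The standard centralizer argument -- every conjugacy class is contained in a coset of $[G,G]$, so $[G:C_G(s)]\le|[G,G]|$ for each of finitely many generators $s$ -- then yields $[G:Z(G)]<\infty$, i.e.\ $G$ is FZ; a quick comparison of $Z(G)$ with its image in $G^{\mathrm{ab}}$ (a finite-index subgroup of $\Z^r$) shows its free rank is exactly $r$.

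For the moreover part I choose a torsion-free finite-index subgroup $Z_0\cong\Z^r\le Z(G)$, so $\Phi(Z_0)=A\Z^r$ for some rank-$r$ real $m\times r$ matrix $A$. Selecting $r$ rows of $A$ forming an invertible $r\times r$ block (elementary linear algebra) picks out a subfamily $\{L_1',\dots,L_r'\}\subseteq\{L_1,\dots,L_m\}$; under the associated quasi-conjugacy, the restricted diagonal action of $Z_0$ on $\prod_{i=1}^r L_i'$ is modelled by translation of $\R^r$ by the cocompact lattice $A'\Z^r$, hence proper and cocompact, and extending to the finite-index overgroup $G\supseteq Z_0$ preserves both properties. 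I expect the main technical obstacle to be the descent from $\Phi$ on $G$ to a genuine homomorphism $\widetilde\Phi$ on $G^{\mathrm{ab}}$; it is precisely here that homogeneity (and hence the orientability hypothesis) plays the decisive role, via the fact that homogeneous quasimorphisms are class functions and vanish on torsion.
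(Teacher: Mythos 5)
Your proof is correct, but it takes a genuinely different route from the paper's. The paper, after the same reduction via Lemma \ref{Lem: TFAE} to homogeneous quasimorphisms $\varphi_1,\dots,\varphi_m$, first passes to a maximal linearly independent subfamily $\varphi_1,\dots,\varphi_r$ (this is how it picks $\{L_1',\dots,L_r'\}$), then runs an explicit ``approximate division'' argument: it finds $g_1,\dots,g_r$ with linearly independent images $\Phi(g_j)$ and shows that every $g$ multiplied by $\prod_j g_j^{-\lfloor b_j\rfloor}$ lands in a fixed finite ball, which yields finite generation and coboundedness in one stroke; only afterwards does it invoke Lemma \ref{Lem: UniformBounded} to get FC and cite the elementary ``finitely generated FC implies FZ''. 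You instead use Lemma \ref{Lem: UniformBounded} \emph{first} to see that the set of commutators is finite, then appeal to B.\,H.~Neumann's BFC theorem to conclude $[G,G]$ is finite before finite generation is known, and then linearize: on $G^{\mathrm{ab}}$ the (descended, homogenized) quasimorphisms become genuine homomorphisms, so finite generation, the bound $r\le m$, and the rank statement all drop out of the structure theory of discrete subgroups of $\R^m$; your sub-collection $\{L_1',\dots,L_r'\}$ comes from an invertible $r\times r$ block of the matrix representing $\Phi|_{Z_0}$ on a lattice $Z_0\cong\Z^r\le Z(G)$, rather than from linear independence of the $\varphi_i$ on all of $G$. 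Both selections are valid and both arguments are complete (your descent to $G^{\mathrm{ab}}$ is sound --- in fact, since $[G,G]$ is finite and each $\varphi_i$ is homogeneous, $\varphi_i(gc)=\varphi_i(g)$ exactly for $c\in[G,G]$, so the descent needs no bounded-error fudging). The trade-off is that your route is conceptually cleaner --- everything reduces to linear algebra on the abelianization --- but leans on the full BFC theorem, a heavier classical input than the centralizer argument the paper uses, whereas the paper's matrix-rank computation is more self-contained and delivers coboundedness as a by-product of the same estimate that gives finite generation.
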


\begin{proof}
    By Lemma \ref{Lem: TFAE}, there exists an $L_i$-oriented quasimorphism $\varphi_i$ on $G$ for each $i=1,\dots,m$. Properness of the diagonal action $G\curvearrowright \prod_{i=1}^mL_i$ is equivalent to say that the set
    \[T_k:=\{g\in G\mid |\varphi_i(g)|\le k,\ \ \forall 1\le i\le m\}\]
    is finite for any $k\ge 0$.

    If there exist constants $c_1,\dots,c_{m-1}\in \R$ such that $\varphi_m=\sum_{i=1}^{m-1} c_i\varphi_i$, then for any $g\in G$ with $|\varphi_i(g)|\le k$ for $1\le i\le m-1$, we have $|\varphi_m(g)|\le k\cdot\max\{|c_i|\mid 1\le i\le m-1\}$. This implies that the set \[\{g\in G\mid |\varphi_i(g)|\le k,\ \ \forall 1\le i\le m-1\}\] is finite. Equivalently, the diagonal action $G\curvearrowright \prod_{i=1}^{m-1}L_i$ is still proper. By choosing $\{\varphi_i\mid 1\le i\le r\}\subseteq \{\varphi_i\mid 1\le i\le m\}$ to be a maximal linearly independent subset, the above arguments show that the diagonal action $G\curvearrowright \prod_{i=1}^{r}L_i$ is still proper.

    Since $T_k$ is finite for any $k\ge 0$ and $G=\bigcup_{k=1}^{\infty} T_k$, we can arrange the group elements of $G$ in a sequence $(g_n)_{n\ge 1}$ such that $T_k$ is exactly the set of the first $n_k$ elements, where $n_k=|T_k|\in \N$ is non-decreasing with respect to $k$. 
    Define a map $\Phi: G \to \R^r$ by $\Phi(g)=(\varphi_1(g),\cdots, \varphi_r(g))^T$ for any $g\in G$. For any $k\in \N$, define an ($r, n_k$)-matrix $M_k$ by $M_k=(\Phi(g_1),\dots,\Phi(g_{n_k}))$. Note that for any $k<k'$, $M_k$ is contained in $M_{k'}$ as the first $n_k$ columns. Therefore, $\rk(M_k)$ is non-decreasing with respect to $k$. Moreover, $\rk(M_k)\le r$ by definition. Thus, there exists $r'\le r$ and $k_0\in\N$ such that $\rk(M_k)=r'$ for any $k\ge k_0$. 

    \begin{claim}
        $r'=r$.
    \end{claim}
    \begin{proof}[Proof of Claim]
        Suppose that $r'<r$. After passing to a subsequence $(M_{k_l})_{l\in\N}$ and reordering the quasimorphisms if necessary, we can assume that the first $r'$ row vectors of $M_{k_l}$ are linearly independent for any $l\in \N$. Thus, there exist constants $c_i\in\R$ ($i=1,\dots, r'$) such that $\varphi_r(g_n)=\sum_{i=1}^{r'}c_i\varphi_i(g_n)$ for any $n\ge 1$. This contradicts with the linear independency of $\varphi_i$ ($i=1,\dots,r$) on $G$. 
    \end{proof}

    Since $\rk(M_{k_0})=r$, we can choose $r$ column vectors of $M_{k_0}$ that are linearly independent. WLOG, let $\Phi(g_1),\dots,\Phi(g_r)$ be linearly independent. For any $g\in G$, there exist $b_n\in\R$ ($n=1,\dots, r$) such that 
    \[\Phi(g) = \sum_{n=1}^{r}b_n\Phi(g_n).\]
    For any number $x\in \R$, we use $\lfloor x\rfloor$ to denote the floor function. By definition of homogeneous quasimorphisms, 
    \[\left |\varphi_i \left (g\prod_{n=1}^{r}g_n^{-\lfloor b_n\rfloor} \right )-\varphi_i(g)+\sum_{n=1}^{r}\lfloor b_n\rfloor\varphi_i(g_n)\right |\le rD(\varphi_i),\]
    for any $1\le i\le r$. Thus,
    \begin{align}\label{cobdd}
        \left |\varphi_i \left (g\prod_{n=1}^{r}g_n^{-\lfloor b_n\rfloor} \right ) \right |\le \sum_{n=1}^{r} |\varphi_i(g_n)|+rD(\varphi_i)\le r(k_0+D(\varphi_i)),
    \end{align}
    for any $1\le i\le r$. Since $\varphi_j$ is a linear combination of $\varphi_1,\dots,\varphi_r$ for $j>r$, $\left |\varphi_i \left (g\prod_{n=1}^{r}g_n^{-\lfloor b_n\rfloor} \right ) \right |$ is also bounded above by different constants. This shows that there exists $k\ge 0$ independent of $g$ such that $g\in \langle T_k\rangle$. In particular, $G$ is finitely generated.
    
    Lemma \ref{Lem: UniformBounded} and properness of $G\curvearrowright\prod_{i=1}^rL_i$ imply that $G$ is an FC group. Since a finitely generated FC group is FZ (see \cite[\S 2]{Neu51}), $G$ is FZ. Moreover, (\ref{cobdd}) also shows that the diagonal action $G\curvearrowright\prod_{i=1}^rL_i$ is cobounded, and thus geometric. Therefore, the rank of $G$ equals $r$. 
\end{proof}

\begin{remark}
    Conversely, any finitely generated FZ group $G$ has PH$_0$ from orientable lineal actions \cite[Proposition 5.26]{Wan25}.
\end{remark}

Now we can give a description of groups that have property PH$_0$ from lineal actions.

\begin{proposition}\label{Prop: AllQausiLines}
    Assume a group $G$ has property PH$_0$ from lineal actions $G\curvearrowright L_1,\cdots,G\curvearrowright L_m$. Then $G$ fits into a short exact sequence $$1\to H\to G\to \Z_2^n\to 1,$$ where $n\le m$ and $H$ is a finitely generated FZ group of rank $r\le m$. Moreover, there exists a subset $\{L_1',\cdots,L_r'\}\subseteq \{L_1,\cdots,L_m\}$ so that the diagonal action $G\curvearrowright \prod_{i=1}^rL_i'$ is geometric. 
\end{proposition}

\begin{proof}
    For the forward direction, consider the induced action of $G$ on $\{\partial L_i: 1\le i\le m\}$. This gives a homomorphism $\rho: G\to \Z_2^m$, whose image is a finite group $\Z_2^n$ for some $n\le m$. The subgroup $H:=\ker \rho$ acts orientably on each $L_i$. By applying Lemma \ref{Lem: OriLineal} to $H$, we have that $H$ is a finitely generated FZ group of rank $r\le m$ and there exists a subset $\{L_1',\cdots,L_r'\}\subseteq \{L_1,\cdots,L_m\}$ so that $H\curvearrowright \prod_{i=1}^rL_i'$ is geometric. Since $H$ is of finite index in $G$, the diagonal action $G\curvearrowright \prod_{i=1}^rL_i'$ is also geometric. 
\end{proof}



\begin{corollary}\label{Cor: LinealPH}
    For any group $G$, the following are equivalent:
    \begin{enumerate}
        \item $G$ has property PH from lineal actions;
        \item $G$ is finitely generated and has property QT from lineal actions;
        \item $G$ is finitely generated and virtually abelian.
    \end{enumerate}
\end{corollary}
\begin{proof}
    ``(3)$\Rightarrow$(2)'' is clear since property QT is a commensurability invariant. ``(2)$\Rightarrow$(1)'' follows from Lemma \ref{Lem: QTImpPH}. ``(1)$\Rightarrow$(3)'' follows from Proposition \ref{Prop: AllQausiLines} and the fact that any group containing a finitely generated finite-index subgroup is also finitely generated.
\end{proof}

\begin{corollary}\label{Cor: NilNoPH}
    For any finitely generated group $G$ with sub-exponential growth, the following are equivalent:
    \begin{enumerate}
        \item $G$ has property PH;
        \item $G$ has property QT;
        \item $G$ is virtually abelian.
    \end{enumerate}
\end{corollary}

\begin{proof}
    ``(3)$\Rightarrow$(2)'' is clear. ``(2)$\Rightarrow$(1)'' follows from Lemma \ref{Lem: QTImpPH}. We only need to show ``(1)$\Rightarrow$(3)''. Let $G$ be a finitely generated group with sub-exponential growth that has property PH. By passing to a finite-index subgroup, we can assume that $G$ has property PH$_0$. 
    By definition, there exist finitely many unbounded hyperbolic spaces $X_1,\ldots, X_n$ on which $G$ acts coboundedly such that the diagonal action $G\curvearrowright X=\prod_{i=1}^nX_i$ is proper. Since any group admitting a focal or general type action on a hyperbolic space contains a non-abelian free sub-semigroup and thus has exponential growth, $G$ must act lineally on each $X_i$ for $1\le i\le n$. This implies that $G$ has property PH$_0$ from lineal actions. Then the conclusion follows from Corollary \ref{Cor: LinealPH}. 
\end{proof}

\section{First examples}\label{sec: FirstExample}

In this section, we are going to give some examples of groups with or without property PH. A group is called \textit{non-elementary} if it is not virtually abelian. 

\subsection{The Heisenberg group}\label{subsec: Heisenberg}
For every group $G$, Abbott-Balasubramanya-Osin \cite{ABO19} defined the set of \textit{hyperbolic structures} on $G$, denoted $\H(G)$, which consists of equivalence classes of
(possibly infinite) generating sets of $G$ such that the corresponding Cayley graph
is hyperbolic; two generating sets of $G$ are equivalent if the corresponding word
metrics on $G$ are bi-Lipschitz equivalent. These equivalence classes can be ordered
according to the amount of information the associated Cayley graph retains about
the group (see \cite[Section 1]{ABO19} for details); this makes $\H(G)$ a poset.

\begin{remark}
    With the language of hyperbolic structures, an equivalent definition for a group $G$ to have property PH$_0$ is that there exist finitely many hyperbolic structures $[S_1],\cdots,[S_n]\in \H(G)$ such that the diagonal action of $G$ on the $\ell^1$-product $\prod_{i=1}^n\G(G,S_i)$ is proper.
\end{remark}

The type of a hyperbolic structure $[S]\in \H(G)$ coincides with the type of the natural action $G\curvearrowright \G(G,S)$. In \cite{ABO19}, the sets of elliptic, lineal, horocyclic and general type hyperbolic structures on $G$ are denoted by $\H_e(G), \H_{\ell}(G), \H_{qp}(G)$ and $\H_{gt}(G)$, respectively.

Let $H=\langle a,b,c\mid [a,c]=[b,c]=1, [a,b]=c\rangle$ be the Heisenberg group. \cite[Example 4.23]{ABO19} shows that $\H(\Z^n)=\H_{e}(\Z^n)\sqcup \H_{\ell}^+(\Z^n)$ for $n\ge 2$ where $\H_{\ell}^+(\Z^n)$ is an antichain of cardinality continuum. The first result here is to show that $H$ and $\Z^n$ have isomorphic hyperbolic structures.
\begin{lemma}\label{Lem: HypStrOfHeisenberg}
    $\H(H)=\H_{e}(H)\sqcup \H_{\ell}^+(H)$ where $\H_{\ell}^+(H)$ is an antichain of cardinality continuum.
\end{lemma}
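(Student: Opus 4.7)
The plan is to mimic the argument of \cite[Example 4.23]{ABO19} for $\Z^n$, exploiting that the nilpotency of $H$ forces every homogeneous quasimorphism on $H$ to be a homomorphism factoring through the abelianization $H/[H,H] \cong \Z^2$. For any homogeneous quasimorphism $\varphi$ on $H$, Remark \ref{Rmk: HomoVersion} gives that $\varphi$ is bounded on commutators; combined with the identity $c^n = [a^n, b]$ valid in $H$ (from $[a,b]=c$ together with centrality of $c$) and homogeneity of $\varphi$, this forces $\varphi(c)=0$. A short calculation with the class-$2$ Hall-Petresco identity $(gh)^n = g^n h^n [h,g]^{\binom{n}{2}}$ combined with $\varphi(c)=0$ then upgrades $\varphi$ to an honest homomorphism $H\to\R$. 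Conversely every homomorphism $\Z^2\to\R$ lifts. Hence the space of homogeneous quasimorphisms on $H$ is canonically $\mathrm{Hom}(\Z^2,\R)\cong\R^2$.

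Next I would show $\H(H)=\H_e(H)\sqcup\H_\ell^+(H)$. By Theorem \ref{Thm: ClassOfHypStru} together with the fact that horocyclic actions cannot be cobounded, it suffices to rule out hyperbolic structures of general type, of focal type, and of non-orientable lineal type. A general type action of $H$ would yield a non-abelian free subgroup via ping-pong on two independent loxodromics, contradicting amenability of the nilpotent group $H$. A non-orientable lineal action of $H$ on a quasi-line $L$ would restrict to an orientable lineal action of the index-$2$ kernel $K\le H$ of the induced map $H\to\Z/2$, with associated translation-length homogeneous quasimorphism $\tau\colon K\to\R$ satisfying $\tau\circ\mathrm{Ad}_g=-\tau$ for any swap element $g\in H\setminus K$; a direct computation in $H$ with the Heisenberg relations on each of the three possible index-$2$ subgroups of $H$ forces $\tau\equiv 0$, contradicting existence of loxodromics.

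The main obstacle is ruling out focal hyperbolic structures. My plan is: if $H$ admitted a focal cobounded action on a hyperbolic space $X$, let $\xi\in\partial X$ be the unique boundary point fixed by all of $H$. Because the action is focal and not lineal, the loxodromic elements cannot all share both of their fixed points, so one can select loxodromics $g,g'$ with common attractor $\xi$ but distinct repellers $g^-\neq g'^-$ (otherwise $H$ would fix a second boundary point and the action would be lineal). A standard ping-pong argument on small disjoint neighborhoods of $g^\pm$ and $g'^\pm$ then shows that sufficiently high powers of $g$ and $g'$ generate a free sub-semigroup inside $H$, contradicting the polynomial growth of $H$ given by the Bass-Guivarc'h theorem, since polynomial growth precludes any non-abelian free sub-semigroup.

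Finally, to identify $\H_\ell^+(H)$ as a continuum antichain, I would invoke Lemma \ref{Lem: TFAE}: each non-zero element $\varphi\in\R^2$ yields an orientable lineal cobounded action of $H$ on the orbit quasi-line under the quasi-action $\rho(g,x)=x+\varphi(g)$, and two such quasimorphisms give equivalent hyperbolic structures if and only if they are positive scalar multiples of each other. This produces a bijection between $\H_\ell^+(H)$ and $(\R^2\setminus\{0\})/\R_{>0}\cong S^1$, giving cardinality continuum. The antichain property is then verified exactly as in \cite[Example 4.23]{ABO19}: for non-proportional $\varphi_1,\varphi_2\in\R^2$, one selects a sequence of elements on which $|\varphi_1|$ remains bounded while $|\varphi_2|$ tends to infinity (and vice versa), showing that neither of the corresponding hyperbolic structures dominates the other.
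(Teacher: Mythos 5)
Your proposal is correct, but it takes a genuinely different and considerably more self-contained route than the paper. The paper's proof is short: (i) polynomial growth of $H$ immediately gives $\H_{qp}(H)=\H_{gt}(H)=\emptyset$ (via the standard fact, used again in Corollary \ref{Cor: NilNoWQT}, that focal and general-type actions produce free sub-semigroups); (ii) the two coordinate projections $H\to\langle a\rangle$, $H\to\langle b\rangle$ exhibit two distinct elements of $\H_\ell^+(H)$, and then \cite[Theorem 4.22(b)]{ABO19} (cardinality of $\H_\ell^+$ is $0$, $1$, or at least continuum) yields the continuum; (iii) orientability of every lineal structure is proved by a direct boundary computation: writing a loxodromic $g=a^mb^nc^k$, the relations give $aga^{-1}=gc^n$ and $bgb^{-1}=gc^{-m}$, and since $g$ and the central element $c$ fix $\{g^+,g^-\}$ pointwise, so do $a$ and $b$. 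You instead classify all homogeneous quasimorphisms on $H$ via the Hall--Petresco identity (showing they form $\mathrm{Hom}(\Z^2,\R)$), run explicit ping-pong arguments for the focal and general-type cases, and rule out non-orientable lineal structures by the anti-invariance $\tau\circ\mathrm{Ad}_g=-\tau$ of the Busemann quasimorphism combined with the fact that conjugation acts unipotently (hence trivially) on $K^{ab}\otimes\R$ for any index-two $K\le H$. What your approach buys is a complete, quantitative description of $\H_\ell^+(H)$ as a projective line of directions, rather than a bare cardinality statement, and it avoids reliance on \cite[Theorem 4.22(b)]{ABO19}; what it costs is length and a couple of delicate ping-pong setups. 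Two small caveats: the equivalence classes of orientable lineal structures correspond to nonzero homogeneous quasimorphisms up to \emph{nonzero} (not merely positive) scalars, since $\varphi$ and $-\varphi$ yield the same quasi-line with the ends relabelled, so the parameter space is the real projective line rather than $S^1$ --- this does not affect the cardinality claim; and in the focal case the ping-pong producing a free sub-semigroup from two loxodromics sharing an attractor but not a repeller requires a little care (it is cleanest to quote \cite[Proposition 3.2]{CCMT15} or the corresponding statement on exponential growth, which is exactly what the paper does).
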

\begin{proof}
    Since $H$ has polynomial growth, it is clear that $\H_{qp}(H)=\H_{gt}(H)=\emptyset$ and thus $\H(H)=\H_{e}(H)\sqcup \H_{\ell}(H)$. Note that  there are two natural surjective homomorphisms $\phi_a: H\to \langle a\rangle, a\mapsto a, b\mapsto 1,c\mapsto 1$ and $\phi_b: H\to \langle b\rangle, a\mapsto 1, b\mapsto b, c\mapsto 1$. By the Svarc-Milnor lemma, these two homomorphisms give two non-equivalent generating sets in $\H_{\ell}^+(H)$. Thus $|\H_{\ell}^+(H)|\ge 2$. Since \cite[Theorem 4.22 (b)]{ABO19} shows that for every group $G$, the cardinality of $\H_{\ell}^+(G)$ is 0, 1 or at least continuum. Hence, the cardinality of $\H_{\ell}^+(H)$ is continuum. It remains to show that $\H_{\ell}(H)=\H_{\ell}^+(H)$.

    Pick an arbitrary element $[S]\in \H_{\ell}(H)$ and an arbitrary element $g\in H$ such that $g$ is loxodromic on $\G(H,S)$. Note that $\partial \G(H,S)=\{g^+,g^-\}$.  Since $c\in Z(H)$, one has that $cg=gc$ and thus $c$ fixes pointwise $\{g^+,g^-\}$. Moreover, it follows from another relation $ab=bac$ that $g$ can be written as $a^mb^nc^k$ for some $m,n,k\in \Z$ and $$aga^{-1}=a^{m+1}b^na^{-1}c^k=a^mb^nc^{k+n}=gc^n, \quad bgb^{-1}=ba^{m}b^{n-1}c^k=a^mb^nc^{k-m}=gc^{-m}.$$ 
    Since both $g$ and $c$ fix pointwise $\{g^+,g^-\}$, the above equations show that $a\cdot g^{\pm}=g^{\pm}, b\cdot g^{\pm}=g^{\pm}$. Since $\{a,b,c\}$ generates $H$, one gets that any element in $H$ fixes pointwise $\partial\G(H,S)$. This means that $H\curvearrowright \G(H,S)$ is orientable. Since $[S]$ is arbitrary,  we get that $\H_{\ell}(H)=\H_{\ell}^+(H)$.
\end{proof}

Since $H$ is a non-elementary group with polynomial growth, Corollary \ref{Cor: NilNoPH} shows that $H$ does not have property PH. Therefore, having property PH or not gives a distinction between $\Z^n(n\ge 2)$ and $H$. Another way to prove that $H$ does not have property PH will be given by Lemma \ref{Lem: CenComImpNoPH}.

It is well-known that $H$ fits into a central extension $$1\to \Z\to H\to \Z^2\to 1.$$ As a corollary of Theorem \ref{MainThm1}, this central extension has an unbounded Euler class. We will discuss central extensions and their Euler classes more detailedly in the next section.

\subsection{Crystallographic groups}
In this subsection, we give an equivalent condition for a crystallographic group to have property PH$_0$. As a corollary, we show that the (3,3,3)-triangle group has property QT but no PH$_0$. 

Recall that a \textit{crystallographic group} is a discrete subgroup of $\Isom(\R^r)$ that acts properly cocompactly on $\R^r$. By the first Bieberbach Theorem (see \cite{Szc25}), any such group $\Gamma$ fits into a short exact sequence of the form 
\begin{equation}\label{BieberThm}
    1\to A\to \Gamma\to F\to 1
\end{equation}
where $A$, called the \textit{translation subgroup}, is isomorphic to $\Z^r$ and $F$, the \textit{point group}, is a finite subgroup of the orthogonal group $O_r(\R)$. The above short exact sequence right-splits. This amounts to saying that $\Gamma$ can be written as a semi-direct product $A\rtimes_{\phi} F$ where $\phi$ is a homomorphism from $F$ to $\aut(\Z^r)=\gl_r(\Z)$.

\begin{lemma}\label{Lem: EquiCondForCryGp}
    Let $\Gamma\le \Isom(\R^r)$ be a crystallographic group. $\Gamma$ has property PH$_0$ if and only if the corresponding homomorphism $\phi: F\to \gl_r(\Z)$ factors through $\prod_{i=1}^r\aut(\Z)=\Z_2^r$.
\end{lemma}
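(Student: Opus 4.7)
The plan is to treat the two implications separately, leveraging the virtual abelianness of $\Gamma$ throughout.

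For the ``if'' direction, suppose $\phi$ factors through $\Z_2^r$ via characters $\epsilon_1,\dots,\epsilon_r\colon F\to\{\pm1\}$ realizing the diagonal $\pm1$ action on a $\Z$-basis $\{e_1,\dots,e_r\}$ of $A$, and let $\pi_i\colon A\to \Z$ denote the $i$-th coordinate projection. For each $i$, I would define $\rho_i\colon \Gamma=A\rtimes F\to \Isom(\R)$ by $\rho_i(a,f)(x)=\epsilon_i(f)\,x+\pi_i(a)$. The $F$-equivariance $\pi_i\circ\phi(f)=\epsilon_i(f)\pi_i$ is exactly what is needed to make $\rho_i$ a homomorphism, and a direct check shows that each $\rho_i$ is lineal with cobounded orbit $\pi_i(A)=\Z$. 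The diagonal action on $\R^r$ with the $\ell^1$-metric then coincides (up to a bi-Lipschitz identification) with the standard proper cocompact affine action of $\Gamma$ on $\R^r$, which immediately yields (PH').

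For the ``only if'' direction, suppose $\Gamma$ has (PH'). Since $\Gamma$ is virtually $\Z^r$, hence of polynomial growth, the argument in Corollary \ref{Cor: NilNoWQT} rules out focal and general-type factors; after discarding bounded factors I may assume (PH') from lineal actions $\Gamma\curvearrowright L_1,\dots,\Gamma\curvearrowright L_m$. The induced action on the endpoint sets produces a homomorphism $\sigma=(\sigma_1,\dots,\sigma_m)\colon \Gamma\to\Z_2^m$ whose kernel $H$ acts orientably on every $L_i$. Lemma \ref{Lem: TFAE} supplies a homogeneous quasimorphism $\varphi_i\colon H\to \R$ associated to each action, and restricting $\varphi_i$ to the finite-index subgroup $A\cap H\cong \Z^r$ gives a genuine linear functional $\psi_i\in(A\otimes\R)^*$. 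Properness of the diagonal action forces $\bigcap_i\ker\psi_i=0$, so the $\psi_i$'s span $(A\otimes\R)^*$.

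I would then use the splitting $F\hookrightarrow \Gamma$ together with conjugation-invariance of homogeneous quasimorphisms to compute the $F$-action on each $\psi_i$. For $f\in F$ and $a\in A\cap H$, conjugation by $f$ permutes the two endpoints of $L_i$ according to $\sigma_i(f)\in\{\pm1\}$, giving $\varphi_i(faf^{-1})=\sigma_i(f)\varphi_i(a)$, and therefore $\psi_i\circ\phi(f)=\sigma_i(f)\psi_i$. Hence each $\psi_i$ is a simultaneous $\pm1$-eigenvector of the dual action of $\phi(F)$ on $(A\otimes\R)^*$; choosing $r$ linearly independent $\psi_i$'s exhibits $\phi(F)$ as a finite group of commuting involutions simultaneously diagonalizable over $\R$ with eigenvalues in $\{\pm1\}$.

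The main obstacle I expect is the final step of upgrading this $\R$-diagonalization to a genuine factorization through the integral subgroup $\Z_2^r\leq \gl_r(\Z)$, because the integral eigenlattices $A\cap V_\chi$ can sum to a proper finite-index sublattice of $A$ rather than to $A$ itself. I would handle this with a lattice-theoretic argument: each $A\cap V_\chi$ is a pure $\Z$-submodule of $A$ on which $\phi(F)$ acts by the character $\chi$, and splitting $A$ as a direct sum adapted to these eigenlattices (possibly after a careful basis change using the crystallographic constraint that $A$ is $\phi(F)$-invariant as a full-rank lattice) yields a $\Z$-basis of $A$ with respect to which $\phi(F)$ lies in the diagonal copy of $\Z_2^r$.
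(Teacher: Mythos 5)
Your overall route matches the paper's: the ``if'' direction is the same explicit construction of $r$ affine actions on $\R$, and the ``only if'' direction follows the same reduction (discard non-lineal factors, take the orientation homomorphism into $\Z_2^m$, extract Busemann quasimorphisms, restrict to the lattice to get linear functionals $\psi_i$ with $\psi_i\circ\phi(f)=\sigma_i(f)\psi_i$). In fact your version is more complete than the paper's, which stops after producing the homomorphism $\bar\rho\colon F\to\Z_2^r$ from the boundary action and never spells out the link to $\phi$; your eigenfunctional computation is exactly the missing bridge. (One cosmetic point you inherit from the paper: the extension $1\to A\to\Gamma\to F\to 1$ need not split --- the Klein bottle group is a torsion-free planar crystallographic group --- but since $A$ is abelian, conjugation by any lift of $f$ is independent of the lift, so your computation of $\varphi_i(faf^{-1})$ goes through without a section.)

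The genuine gap is the final step, and it cannot be repaired: the obstruction you name (eigenlattices summing to a proper finite-index sublattice) is a real obstruction, not a technicality to be absorbed by a ``careful basis change.'' Take $A=\Z^2$ with $F=\Z_2$ acting by the coordinate swap $S=\left(\begin{smallmatrix}0&1\\1&0\end{smallmatrix}\right)$. The eigenlattices are $\Z(1,1)$ and $\Z(1,-1)$, any $\Z$-basis of eigenvectors has even determinant, and $S$ is not conjugate in $\gl_2(\Z)$ to a diagonal matrix; yet the crystallographic group $\Z^2\rtimes_S\Z_2$ \emph{does} have property (PH'), via the eigenfunctionals $a+b$ and $a-b$ exactly as in your framework. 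So if ``factors through $\prod_{i=1}^r\aut(\Z)$'' is read as ``diagonal in some $\Z$-basis of $A$,'' the only-if direction is false and no lattice-theoretic argument will close your gap. What your argument legitimately proves --- and all the paper's own proof establishes, and all that the application to the $(3,3,3)$-triangle group requires --- is the statement at the end of your third paragraph: $\phi(F)$ is a group of commuting involutions, hence an elementary abelian $2$-group of rank at most $r$, so that $\phi$ factors through $\Z_2^r$ abstractly (equivalently, $F$ embeds in $\Z_2^r$ since $\phi$ is faithful). You should stop there and delete the proposed integral upgrade.
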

\begin{proof}
    ``$\Rightarrow$'': Since $\Gamma$ is a virtually abelian group of rank $r$ and has property PH$_0$,  Proposition \ref{Prop: AllQausiLines} shows $\Gamma$ admits a geometric diagonal action on a finite product of quasi-lines $\prod_{i=1}^rL_i$. Since $[\Gamma: A]<\infty$, the diagonal action $A\curvearrowright \prod_{i=1}^rL_i$ is also geometric. Moreover, the induced action of $\Gamma$ on $\{\partial L_i: 1\le i\le r\}$ gives a homomorphism $\rho: \Gamma\to \Z_2^r$. Since $A$ is a torsion-free abelian group of rank $r$,  the geometric action $A\curvearrowright \prod_{i=1}^rL_i$ implies that no generating element of $A$ can swap the two points of $\partial L_i(i=1,\cdots,r)$. Hence, $A \le \ker \rho$ and this induces a quotient homomorphism $\bar \rho: F=\Gamma/A\to \Z_2^r$, which completes the proof.

    ``$\Leftarrow$'': Since $\Gamma\le \Isom(\R^r)$ is a crystallographic group, the translation subgroup $A$ is isomorphic to a lattice $\Z^r$ in $\R^r$ and $A$ acts by translation on $\Z^r$. When the corresponding homomorphism $\phi: F\to \gl_r(\Z)$ factors through $\prod_{i=1}^r\aut(\Z)=\Z_2^r$, the point group $F$ preserves each factor of $\Z^r$ and acts by permutation on the boundary of each factor. Since $\Gamma=A\rtimes_{\phi}F$, one gets an isometric action of $\Gamma$ on $\Z^r$ which preserves each $\Z$-coordinate. Since $[\Gamma: A]<\infty$, the action $\Gamma\curvearrowright \Z^r$ is also geometric, which gives the PH$_0$ property of $\Gamma$.
\end{proof}

Let $T=\langle a, b, c\mid  a^2=b^2=c^2=(ab)^3=(bc)^3=(ac)^3=1\rangle$ be the (3,3,3)-triangle group which acts properly and cocompactly on $\R^2$. Clearly $T$ is a crystallographic group with translation subgroup isomorphic to $\Z^2$. Recall that two groups are commensurable if they contain isomorphic finite-index subgroups. Petyt-Spriano \cite[Corollary 4.5]{PS23} used this triangle group to show that being an HHG is not a commensurability invariant.

\begin{proposition}\label{Prop: QTButNotPH0}
    The (3,3,3)-triangle group $T$ has property QT but does not have property PH$_0$. In particular, neither property PH$_0$ nor QT$_0$ is a commensurability invariant.
\end{proposition}
\begin{proof}
    From the short exact sequence (\ref{BieberThm}), we see that $T$ is virtually $\Z^2$. Since property QT is a commensurability invariant (see \cite{BBF21}), $T$ has property QT and thus property PH by Lemma \ref{Lem: QTImpPH}. Since the translation subgroup is torsion-free, any order-$3$ element of $T$ maps nontrivially to the point group. Thus the point group contains an element of order $3$, contradicting Lemma \ref{Lem: EquiCondForCryGp}. This shows that $T$ does not have property PH$_0$. 
\end{proof}

\subsection{Baumslag-Solitar groups}\label{subsec: BSGroup}
Recall that a Baumslag-Solitar group is defined by $BS(m,n)=\langle a,t\mid ta^mt^{-1}=a^n\rangle$. In this subsection, we prove that any solvable Baumslag-Solitar group $BS(1,n)$ has property PH$_0$ but does not have an HHG structure or property QT for $n\ge 2$. 

The reason why $BS(1,n)$ has neither an HHG structure nor property QT is due to the distortion of some infinite-order element in $BS(1,n)$; see \cite[Theorem 7.1]{DHS17} (of which the proof is corrected in \cite{DHS20}) and \cite[Lemma 2.5]{HNY25}.
\begin{lemma}\cite[Corollary 2.7]{HNY25}\label{Lem: BSNoQT}
    The Baumslag-Solitar group $BS(1, n)$ for $n \ge 2$ does not have property QT.
\end{lemma}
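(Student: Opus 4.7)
The plan is to derive a contradiction from the existence of a known exponentially distorted infinite-order element in $BS(1,n)$ together with the general principle that property (QT) forbids such distortion. First I would recall the key structural fact cited as \cite[Lemma 2.5]{HNY25}: if a finitely generated group $G$ has property (QT), then every infinite-order element $g\in G$ is undistorted with respect to any finite generating set, i.e.\ $|g^m|_S\asymp m$. I will take this as the main input and focus the proof on exhibiting a distorted element.

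Second, I would unpack the defining relation $tat^{-1}=a^n$ of $BS(1,n)=\langle a,t\mid tat^{-1}=a^n\rangle$. Iterating gives $t^{k}at^{-k}=a^{n^{k}}$ for every $k\ge 0$, so the element $a^{n^{k}}$ is expressible as a word of length $2k+1$ in the generators $\{a,t\}$. In particular, for every $m\ge 1$ we have
\[
|a^{m}|_{\{a,t\}}\;\le\; 2\lceil\log_n m\rceil+1,
\]
which shows that the cyclic subgroup $\langle a\rangle$ is exponentially distorted in $BS(1,n)$. Since $n\ge 2$, the element $a$ has infinite order (this is immediate from the semidirect product description $BS(1,n)\cong \Z[1/n]\rtimes \Z$, but can also be verified directly by constructing the homomorphism to $\Z[1/n]\rtimes \Z$ sending $a\mapsto 1$, $t\mapsto $ multiplication by $n$).

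Combining the two steps, if $BS(1,n)$ had property (QT), then $a$ would be undistorted, contradicting the exponential upper bound $|a^{m}|_{\{a,t\}}=O(\log m)$ just established. Therefore $BS(1,n)$ does not have property (QT).

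The only genuine content beyond a routine distortion computation is the black-boxed fact that property (QT) forces infinite-order elements to be undistorted. The essential reason is that in a finite product of quasi-trees $X=\prod_{i=1}^{n}T_{i}$ with the $\ell^{1}$-metric, each factor action is semisimple, so an infinite-order element must act loxodromically on at least one factor (otherwise the orbit of $\langle g\rangle$ is bounded in each $T_{i}$, contradicting the orbit map being a quasi-isometric embedding). A loxodromic element on a quasi-tree has orbit growth exactly linear in $m$, and the $\ell^{1}$-distance in $X$ is bounded above linearly in the word length, so the quasi-isometric embedding property of the orbit map forces $|g^{m}|_{S}\asymp m$. This is the main conceptual obstacle; once it is admitted, the rest of the argument is the simple distortion calculation above.
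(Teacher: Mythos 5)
Your argument is correct and matches the reasoning the paper itself points to: the paper simply cites \cite[Corollary 2.7]{HNY25} for this lemma, but explicitly attributes it to the distortion of an infinite-order element combined with \cite[Lemma 2.5]{HNY25} (property (QT) forbids distorted infinite-order elements), which is exactly your two-step argument. The only cosmetic issue is that your displayed bound $|a^m|_{\{a,t\}}\le 2\lceil\log_n m\rceil+1$ is literally valid only for $m$ a power of $n$; for general $m$ one uses the base-$n$ expansion to get $|a^m|_{\{a,t\}}=O(\log m)$, which is what your conclusion actually uses, so the proof stands.
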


From now on, let us focus on $G_n = BS(1,n)=\langle a,t\mid tat^{-1}=a^n\rangle$ for $n\ge 2$. 

It is well-known and easy to prove that $G_n$ is isomorphic to the subgroup of $\sl_2(\R)$ generated by 
$$t=\left(
  \begin{array}{cc}
    \sqrt{n} & 0 \\
    0 & 1/\sqrt{n} \\
  \end{array}
\right) \text{ and } a=\left(
  \begin{array}{cc}
    1 & 1 \\
    0 & 1 \\
  \end{array}
\right).$$
Thus we obtain an action of $G_n$ on $\RH^2$ that factors through the action of $\sl_2(\R)$. It is easy to see that this action is focal and cobounded. Indeed, the generator $t$ is a loxodromic element and the other generator $a$ is a parabolic element and both generators have a common fixed point $\infty$ in $\partial \mathbb H^2$.
Moreover, a bounded fundamental domain of this action is given by the ``rectangle'' with vertices $\i,n\i,1+\i,1+n\i$. 

Another $G_n$-action on a hyperbolic space comes from the HNN-extension structure of $G_n$. Let $T_n$ be the corresponding Bass-Serre tree with a path metric $d_T$. Then $G_n \curvearrowright T_n$ is also focal with the unique fixed point $t^-\in \partial T_n$ and cobounded.

It is not hard to show that these two focal actions of $G_n$ are not weakly equivalent. Indeed there are no non-trivial elliptic elements with respect to the first action, while all elements from $[G_n, G_n]$ are elliptic with respect to the second action. As shown in \cite[Proposition 5.2(4)]{FSY04}, the diagonal action $G_n\curvearrowright \mathbb H^2\times T_n$ is proper. Thus, we obtain

\begin{proposition}\label{Prop: BSGroup}
    The solvable Baumslag-Solitar group $G_n=BS(1,n)$ has property PH$_0$ from focal actions on $\mathbb H^2$ and $T_n$.
\end{proposition}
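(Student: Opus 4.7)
The plan is to verify directly that the diagonal action is metrically proper, using the semidirect product structure $G_n \cong \Z[1/n]\rtimes\Z$ (with $\Z$ acting by multiplication by $n$): every element writes uniquely as $g = a^\alpha t^j$ with $\alpha\in\Z[1/n]$ and $j\in\Z$, so it suffices to show that for every $R>0$ the set of $(\alpha,j)$ with $d_{\RH^2}(\i, g\cdot\i) + d_T(v_0, gv_0)\le R$ is finite.

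For the $\RH^2$-action, a direct matrix computation yields $g\cdot\i = \alpha + n^j\i$, so the standard hyperbolic distance formula gives
\[ d_{\RH^2}(\i, g\cdot\i) = \operatorname{arccosh}\!\left(1 + \tfrac{(n^j-1)^2 + \alpha^2}{2n^j}\right). \]
Bounding the left side by $R$ separately controls the two summands in the numerator, yielding $|j| = O(R/\log n)$ and $|\alpha|\le C_1 n^{j/2}$; combined with the bound on $|j|$, this forces an absolute estimate $|\alpha|\le C_2$.

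For the Bass-Serre tree $T_n$, I would exploit the horocyclic structure at the common fixed point $\xi_T=\lim_{k\to\infty}t^{-k}v_0$: the stabilizer of the axis vertex $v_i = t^i v_0$ equals $n^i\Z\subset\Z[1/n]$, so $a^\alpha$ fixes $v_i$ precisely when $i\le\nu(\alpha)$, where $\nu(\alpha)$ is the largest integer $k$ with $\alpha\in n^k\Z$ (and $\nu(0)=+\infty$). Tracing the rays from $v_0$ and from $gv_0=a^\alpha v_j$ toward $\xi_T$ (using that the parent of $a^\alpha v_i$ is $a^\alpha v_{i-1}$, which drops onto the $t$-axis once $i\le\nu(\alpha)$) and locating their common ancestor then gives
\[ d_T(v_0, gv_0) = \begin{cases} |j| & \text{if } j\le\nu(\alpha),\\ j - 2\min(0,\nu(\alpha)) & \text{if } j>\nu(\alpha). \end{cases} \]

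Putting the two estimates together, the tree bound forces either $\alpha\in n^j\Z$ (first case, with $|j|$ bounded from the $\RH^2$ step) or $\nu(\alpha)$ bounded below by a constant depending on $R$ (second case). In either situation $\alpha$ lies in a discrete subgroup $n^{-N}\Z\subset\R$ for some $N=N(R)$, which together with $|\alpha|\le C_2$ leaves only finitely many $\alpha$, and only finitely many $j$. Conceptually, this reflects the fact that $\Z[1/n]$ embeds diagonally as a lattice in $\R\times\prod_{p\mid n}\Q_p$, with the $\RH^2$-displacement measuring the Archimedean coordinate and the $T_n$-displacement measuring the non-Archimedean one. The main obstacle I anticipate is correctly identifying the stabilizers $n^i\Z$ along the $t$-axis (in particular noting that for $i<0$ they are strictly larger than $\Z$) and the ensuing case analysis of geodesics in $T_n$; once that setup is in place, the finiteness conclusion is routine.
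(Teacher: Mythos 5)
Your proposal is correct, and it is built on exactly the same two actions the paper uses: the focal action on $\RH^2$ coming from the matrix representation with $t=\mathrm{diag}(\sqrt n,1/\sqrt n)$ and $a$ unipotent, and the focal action on the Bass--Serre tree $T_n$ of the HNN splitting. The genuine difference is where the properness of the diagonal action comes from: the paper simply quotes \cite[Proposition 5.2(4)]{FSY04} for this step, whereas you verify it directly. I checked your computation: $g\cdot\i=\alpha+n^j\i$ and the resulting $\operatorname{arccosh}$ bound do pin down $|j|\le\log_n(2\cosh R)$ and $|\alpha|\le C_2(R)$; the vertex stabilizers along the $t$-axis are indeed $n^i\Z$ (growing as $i\to-\infty$, i.e.\ toward the fixed end $t^-$, consistent with each vertex having one parent and $n$ children); and the case analysis giving $d_T(v_0,gv_0)=|j|$ for $j\le\nu(\alpha)$ and $j-2\min(0,\nu(\alpha))$ otherwise is correct, so the tree bound forces $\alpha\in n^{-N(R)}\Z$ and finiteness follows. (There is a harmless off-by-one in your parenthetical about when $a^\alpha v_{i-1}$ lands on the axis --- the sharp condition is $i\le\nu(\alpha)+1$ --- but the distance formula you derive from it is the right one.) Your route buys a self-contained proof, essentially re-deriving by hand the adelic picture of $\Z[1/n]$ as a lattice in $\R\times\prod_{p\mid n}\Q_p$ that underlies the cited result; the paper's route is shorter but opaque. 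For a complete proof of the proposition as stated you should also record, as the paper does in one line each, that both actions are cobounded (a fundamental rectangle for $\RH^2$, vertex-transitivity for $T_n$) and focal, since coboundedness is part of the definition of property (PH').
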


As a direct corollary of Lemma \ref{Lem: BSNoQT} and Proposition \ref{Prop: BSGroup}, one gets that the class of finitely generated groups with property QT (resp. QT$_0$) is strictly contained in the class of groups with property PH (resp. PH$_0$). This is a complement to Lemma \ref{Lem: QTImpPH}.

\subsection{Fundamental groups of 3-manifolds}\label{subsec: 3Manifold}

Fix an element $\varphi\in \sl_2(\Z)$ that is Anosov; that is, $\varphi$ has distinct irrational eigenvalues $\lambda>1$ and $0<\lambda^{-1}<1$. The map $\varphi$ is an element of the mapping class group of the torus $T^2$, and the group $G_{\varphi}=\Z^2\rtimes_{\varphi}\Z$ is the fundamental group of the mapping torus of $\varphi$.

By \cite[Proposition 5.3]{FSY04}, the group $G_{\varphi}$ admits two different focal actions on $\mathbb H^2$ such that the induced diagonal action of on $\mathbb H^2\times \mathbb H^2$ is proper. Note that the map $\varphi$ is assumed to be $\left(\begin{array}{cc}
    2 & 1 \\
    1 & 1 \\
  \end{array}\right)$ in \cite{FSY04}, but their proof works for the general case without extra effort. 
Although these focal actions may not be cobounded, \cite[Lemma 4.24]{ABO19} tells us that any focal action is weakly equivalent to a cobounded focal action. In summary, we obtain 

\begin{proposition}\label{Prop: SOL}
    The fundamental group of any Anosov mapping torus has property PH$_0$ from two focal actions on $\mathbb H^2$.
\end{proposition}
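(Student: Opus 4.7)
The plan is to assemble three ingredients already identified in the paragraph preceding the proposition, and then verify that together they yield property (PH') in the exact sense of Definition \ref{Def: SecDef}. The key tension is that Definition \ref{Def: SecDef} demands cobounded actions, whereas the two natural focal actions of $G_\varphi$ on $\mathbb{H}^2$ arising from the Anosov eigendata need not be cobounded.

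First, I would recall from \cite[\S 7]{AR24} the construction of two focal actions of $G_\varphi = \Z^2 \rtimes_\varphi \Z$ on $\mathbb{H}^2$. Roughly, the eigenvalues $\lambda, \lambda^{-1}$ give two distinct characters on the cyclic factor generated by $t$, and together with embeddings of $\Z^2$ as parabolic subgroups fixing a common point at infinity, these assemble into focal actions with the generator $t$ loxodromic. The point is that the two actions are non-equivalent hyperbolic structures because the translation lengths (governed by $\lambda$ versus $\lambda^{-1}$) scale the two Busemann quasimorphisms on $t$ differently, and no coarsely $G_\varphi$-equivariant quasi-isometry can reconcile that.

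Next, I would address the coboundedness issue by invoking \cite[Lemma 4.24]{ABO19}: any focal action on a hyperbolic space is weakly equivalent to a cobounded focal action. Concretely, one replaces $\mathbb{H}^2$ by a suitable $G_\varphi$-invariant subspace (or its quasi-isometric thickening) on which the action is cobounded while preserving the focal type and the underlying hyperbolic structure. Since weak equivalence produces a coarsely $G_\varphi$-equivariant quasi-isometry between the old and new spaces, the two new actions remain non-equivalent. Crucially, a coarsely equivariant quasi-isometry preserves the properness of any diagonal action built from these factors, because the orbit maps differ by a bounded amount and properness is insensitive to such distortion.

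Finally, properness of the diagonal action $G_\varphi \curvearrowright \mathbb{H}^2 \times \mathbb{H}^2$ with the $\ell^1$-metric on the \emph{original} pair of focal actions is \cite[Proposition 5.3]{FSY04}, and by the previous step this properness survives the passage to the cobounded replacements. Combining the non-equivalence, the cobounded replacements, and the proper diagonal action yields property (PH') from two non-equivalent focal actions on $\mathbb{H}^2$, as required. I expect the main obstacle to be the bookkeeping in the second step — namely, spelling out carefully why \cite[Lemma 4.24]{ABO19} can be applied to both focal actions simultaneously while keeping the diagonal action proper — but once the weak-equivalence-preserves-properness principle is stated, the rest is a direct citation.
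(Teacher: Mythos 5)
Your proposal is correct and follows essentially the same route as the paper: the paper likewise cites \cite[\S 7]{AR24} for the two non-equivalent focal actions, invokes \cite[Lemma 4.24]{ABO19} to replace them by weakly equivalent cobounded focal actions, and cites \cite[Proposition 5.3]{FSY04} for properness of the diagonal action on $\mathbb{H}^2\times\mathbb{H}^2$. Your extra remarks on why weak equivalence preserves properness of the diagonal action just make explicit what the paper leaves implicit.
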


To study property PH of $3$-manifold groups (Theorem \ref{IntroThm: 3Manifold}), we will use the sphere-disk decomposition and combine the results on each piece together. This relies on the following theorem about relatively hyperbolic groups. We refer the reader to \cite{Hru10} for a summary of several equivalent definitions of relatively hyperbolic groups. In particular, relatively hyperbolic groups generalize the structure of a free product acting on its Bass--Serre tree. 

\begin{theorem}\label{Thm: PHofRHG}
    Suppose that a group $G$ is hyperbolic relative to a finite collection of subgroups $\P$. If each $P\in \P$ has property PH$_0$, then $G$ has property PH$_0$. 
\end{theorem}

The proof is almost the same as the proof of \cite[Theorem 1.5]{HNY25}, modulo the estimation of the thick distance of relatively hyperbolic groups via quasi-lines \cite[Lemma 3.2, Corollary 3.3]{HNY25}, so we only give a sketch of proof that contains the main ideas. We do not need to assume residual finiteness here, also because we do not rely on \cite[Corollary 3.3]{HNY25}. 

\begin{proof}[Sketch of proof]
    By assumption, for each $P\in \P$, there are hyperbolic spaces $X_1,\dots,X_{n_P}$ such that $P$ acts on $X_i$ coboundedly and acts on $\prod_{i=1}^{n_P} X_i$ properly. Let $\XX_{P}^i = \{X_{Q}^i \mid Q\in G/P\}$ be a collection of copies of $X_i$, where $G/P$ denotes the left cosets of $P$. The group $G$ acts on $\XX_{P}^i$ by $gX_{Q}^i:=X_{gQ}^i$. Denote by $\hat G$ the coned-off graph of $G$, that is to say the metric graph obtained from a Cayley graph of $G$ by adding an edge connecting each pair of (distinct) vertices contained in the same left coset of $P\in \P$. By \cite{Far98}, $\hat G$ is hyperbolic on which $G$ acts coboundedly.
    
    Fix an orbit map $\iota_Q^i:Q\to X_Q^i$. It is conventional to define a collection of maps $\{\iota_Q^i: Q\to X_Q^i \mid Q\in G/P\}$ such that the diagram
    \[\xymatrix{
    Q \ar[r]^g \ar[d]_{\iota_Q^i} & gQ \ar[d]^{\iota_{gQ}^i} \\
    X_Q^i \ar[r]^g & X_{gQ}^i
    }\]
    commutes for any $g\in G$ and $Q\in G/P$. 
    Let $\pi_Q$ be the shortest projection to the coset $Q$ in $H$ with respect to the word metric. For any $Q\ne Q'\in G/P$, we define a projection map $\Pi_{X_{Q'}^i}(X_Q^i):=\iota_{Q'}^i(\pi_{Q'}(Q))$. 
    
    It is easy to verify that $\XX_{P}^i$ equipped with $\Pi_{X_{Q'}^i}(X_Q^i)$ satisfies the projection axioms in \cite{BBF15}. This provides a hyperbolic space $\C \XX_{P}^i$ equipped with a $G$-action. Since each $P$ acts coboundedly on $X_{P}^i$, $G$ also acts coboundely on $\C \XX_{P}^i$. Since each $P$ acts properly on $\prod_{i=1}^{n_P} X_i$, we can verify that the diagonal $G$-action on $\hat G\times \prod_{P\in\P}\prod_{i=1}^{n_P}\C \XX_{P}^i$ is proper by the distance formula for the spaces $\C \XX_{P}^i$ \cite[Theorem 4.13]{BBF15} and the distance formula for relatively hyperbolic groups \cite[Theorem 0.1]{Sis13}. 
\end{proof}

Now we can prove Theorem \ref{IntroThm: 3Manifold}.

\begin{proof}[Proof of Theorem \ref{IntroThm: 3Manifold}]
    Since $M$ is a compact, orientable $3$-manifold, it decomposes into irreducible, $\partial$-irreducible pieces $M_1\dots,M_k$ by the sphere-disk decomposition. In particular, $\pi_1(M)$ is the free product $\pi_1(M_1)* \cdots *\pi_1(M_k)* F_r$ for some free group $F_r$. Thus, the group $G=\pi_1(M)$ is hyperbolic relative to the collection $\P=\{\pi_1(M_1), \dots ,\pi_1(M_k),F_r\}$.


    Assume that there is no piece $M_i$ that supports Nil geometry. For any non-Sol piece $M_i$, $\pi_1(M_i)$ has property QT by \cite[Theorem 1.1]{HNY25}, and thus has property PH by Lemma \ref{Lem: QTImpPH}. Any Sol piece $M_i$ is a torus (semi-)bundle of Anosov type (see for example \cite[Proposition 12.7.6]{Mar23}). Hence, $\pi_1(M_i)$ also has property PH by Proposition \ref{Prop: SOL} and the commensurability invariance of property PH. 
    This shows that each peripheral subgroup $P\in \P$ has property PH. 
    
    Let $G=\pi_1(M)$. Choose a finite-index subgroup $P'$ of $P$ with property PH$_0$ for each $P\in \P$ and denote the collection of $P'$'s by $\P'$. It is well-known that $G$ is residually finite. Thus, \cite[Theorem 1.1]{Bur71} tells us that every finite-index subgroup of $P\in\P$ is separable in $G$.     
    Hence, every $P'\in \P'$ is separable in $G$, which means that the intersection of all subgroups of finite index in $G$ containing $P'$ is $P'$ itself. Therefore, there is a finite-index subgroup $G_P$ of $G$ such that $P'=G_P\cap P$. Consider the finite-index subgroup $G'=\bigcap_{P\in \P} G_P$. The group $G'$ is hyperbolic relative to $\P''=\{G'\cap P'\mid P'\in \P'\}$ and satisfies the assumption of Theorem \ref{Thm: PHofRHG}. Therefore, $G'$ has property PH$_0$ and then $G$ has property PH.
\end{proof}

\section{Central extensions with bounded Euler classes}\label{Sec: CentQH}
In this section, we will show that properties PH and QT are preserved under central extensions with bounded Euler classes (Corollary \ref{Cor: ExtHasPH}). Also, we will prove that if the total group of a central extension has property PH, then this extension must have a bounded Euler class (Corollary \ref{Eu_is_bdd}). The key tool is Proposition \ref{Prop: bddEu}, which provides several equivalent conditions for a central extension to have a bounded Euler class. 

\subsection{Geometric properties of central elements}
\begin{lemma}\label{Lem: CenterProperty}
    Let $E$ be a group acting isometrically on a hyperbolic space $X$ and $Z= Z(E)$.
    \begin{enumerate}
        \item If the action $E\curvearrowright X$ is cobounded, then any element $c\in Z$ is either elliptic or loxodromic on $X$. 
        \item If there exists an element $c\in Z$ that is loxodromic on $X$, then the $E$-action on $X$ is lineal and orientable.
    \end{enumerate}
\end{lemma}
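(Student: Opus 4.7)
My plan is to exploit centrality: for any $c\in Z$ and any $g\in E$, the identity $gcg^{-1}=c$ forces $g$ to preserve the fixed-point set of $c$ in $\partial X$. Combined with Gromov's classification of isometric actions on hyperbolic spaces and the standing fact (cited in the introduction) that cobounded actions cannot be horocyclic, this will drive both parts.

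For part~(1), I would argue by contradiction, assuming $c$ is parabolic with unique fixed point $\xi\in\partial X$. Centrality gives $g\xi=\xi$ for every $g\in E$, so the $E$-action admits a global fixed point on $\partial X$, ruling out general type; coboundedness rules out horocyclic, leaving three cases. If the action is elliptic, then $c$ itself has bounded orbits, contradicting parabolicity. If the action is lineal with boundary fixed set $\{\xi^+,\xi^-\}$, then $c$ permutes $\{\xi^+,\xi^-\}$; a pointwise fix would give $c$ two boundary fixed points, so $c$ must swap them, and hence $\xi\notin\{\xi^+,\xi^-\}$; taking $g\in E$ loxodromic with fixed points $\xi^\pm$, the relation $g\xi=\xi$ then contradicts the fact that $g$'s only boundary fixed points are $\xi^\pm$. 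If the action is focal with (necessarily unique) common fixed point $\eta$, then $\xi=\eta$; picking any loxodromic $g\in E$ with fixed points $\{\eta,\eta'\}$, centrality makes $c$ preserve $\{\eta,\eta'\}$ while fixing $\eta$, so $c$ also fixes $\eta'\neq\eta$, contradicting the uniqueness of $\xi$.

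For part~(2), assume $c$ is loxodromic with fixed points $c^\pm$, so every $g\in E$ preserves $\{c^+,c^-\}$. I claim every loxodromic $g\in E$ has fixed-point set exactly $\{c^+,c^-\}$: either $g$ fixes both $c^\pm$, in which case its two boundary fixed points must be $c^\pm$, or $g$ swaps them and $g^2$ fixes both, giving the same conclusion since $g$ and $g^2$ share their boundary fixed points. Consequently any two loxodromic elements of $E$ share both fixed points, ruling out focal and general type; as $c$ loxodromic excludes elliptic and horocyclic, the action must be lineal. For orientability, if some $g\in E$ swapped $c^+$ and $c^-$, then $gcg^{-1}$ would have $c^-$ as its attracting and $c^+$ as its repelling fixed point, contradicting $gcg^{-1}=c$.

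The main point to handle with care is the distinction between \emph{pointwise fix} and \emph{swap} whenever an element preserves a two-point subset of $\partial X$; in the swap subcases one passes to squares or invokes the attracting/repelling dynamics to extract the needed contradiction, and the rest of the argument is a clean enumeration of the Gromov classification.
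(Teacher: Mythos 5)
Your proof is correct and follows essentially the same route as the paper: centrality forces every $g\in E$ to preserve the boundary fixed-point set of $c$ (and vice versa), which combined with Gromov's classification and the fact that cobounded actions cannot be horocyclic yields both parts. Your treatment is in fact slightly more careful than the paper's, since you explicitly handle the possibility that an element swaps rather than pointwise fixes a two-point boundary set, a case the paper glosses over in part (1).
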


\begin{proof}
    (1) Assume that $c$ is parabolic on $X$. Let $\xi\in \partial X$ be the unique fixed point of $c$. If there exists a loxodromic element $g\in E$ on $X$, then the equality $cgc^{-1}=g$ shows that $cg^{\pm}=g^{\pm}$ which contradicts with $|\rm{Fix(c)}|=1$. Hence, there is no loxodromic elements in $E$ on $X$. Since a horocyclic action cannot be cobounded, one gets that $E\curvearrowright X$ is a bounded action. This implies that $c$ is an elliptic element, which is also a contradiction. 
    
    (2) Since $c$ is central in $E$ and loxodromic on $X$, $E$ leaves $\Lambda\langle c\rangle=\{c^+,c^-\}$ invariant. This means the action of $E$ on $X$ is lineal. Suppose that there exists an element $b\in E$ that swaps the two points $c^+,c^-$. Then for any $o\in X$, it is clear that $|bc^no- c^nbo|\to \infty$ as $n\to \infty$. This shows that $bc^nb^{-1}\neq c^n$ for $n\gg 0$, which is a contradiction to $c\in Z$. Hence, the action of $E$ on $X$ is lineal and orientable.
\end{proof}

\begin{lemma}\label{Lem: PHToQL}
    Let $E$ be a group that has property PH$_0$ from actions on $X_1,\dots,X_n$. Let $Z\le Z(E)$. Then $Z$ is a finitely generated abelian group of rank at most $n$. Moreover, there exists a subset of quasi-lines $\{L_1,\cdots,L_r\}\subseteq \{X_1,\cdots,X_n\}$ such that the $E$-action on each $L_i$ is lineal and orientable, and $Z$-action on $\prod_{i=1}^rL_i$ is geometric. 
\end{lemma}
\begin{proof}
    From Lemma \ref{Lem: CenterProperty}, an important observation is that each $X_i$ on which $Z$ acts non-elliptically is a quasi-line and $E$ has an orientable lineal action on it. Hence, by removing all $X_i$'s on which $Z$ acts elliptically, the remaining $X_i$'s (assumed to be $X_1,\dots,X_m$) are all quasi-lines. Clearly, $Z$ has property PH$_0$ from orientable lineal actions on $X_1,\dots,X_m$. The remaining proof is completed by Lemma \ref{Lem: OriLineal}. 
\end{proof}

Next we give a necessary condition for a group to have property PH.

\begin{lemma}\label{Lem: NoPH}
    Let $G$ be a group and $c\in Z(G)$ be a central element. Suppose that a subgroup $G'\le G$ of finite index has a cobounded action on a hyperbolic space $X$. Let $c$ be a commutator of $G$ and $k>0$ such that $g^k\in G'$ for any $g\in G$, then $c^k$ acts as an elliptic element on $X$. 
\end{lemma}

\begin{proof}
    If $c$ is of finite order, then there is nothing to prove. Suppose $c$ is of infinite order. 

    \begin{claim}
        $c^{k^2m^2}$ is a commutator in $G'$ for any $m\ge 1$.
    \end{claim}
    \begin{proof}[Proof of Claim]
        Since $c$ is a commutator in $G$, there exist $g,h\in G$ such that $ghg^{-1}=ch$. Since $c\in Z(G)$, $gh^{km}g^{-1}=c^{km}h^{km}$ for all $m\ge 1$. Reformulating this equality gives that $h^{-km}gh^{km}=c^{km}g$ and then $h^{-km}g^{km}h^{km}=c^{k^2m^2}g^{km}$. Note that $g^{km}, h^{km}\in G'$ for each $m\ge 1$. Thus, $[g^{-km},h^{-km}]=c^{k^2m^2}$ is a commutator in $G'$.
    \end{proof}
    Suppose that $c^k$ is not elliptic on $X$. By Lemma \ref{Lem: CenterProperty}, the action $G'\curvearrowright X$ is orientable lineal. Hence, Lemma \ref{Lem: UniformBounded} shows that the orbit of $\langle c^{k^2}\rangle$ on $X$ is bounded, which is a contradiction to the non-ellipticity of $c^k$. 
\end{proof}

\begin{lemma}\label{Lem: CenComImpNoPH}
    Let $G$ be a group. If there exists an infinite-order element $c\in Z(G)$ such that $c$ is a commutator in $G$, then $G$ does not have property PH.
\end{lemma}
\begin{proof}
    Let $G'\le G$ be a finite-index subgroup that has property PH$_0$ from actions on $X_1,\dots,X_n$. By Lemma \ref{Lem: NoPH}, some power of $c$ is elliptic on each $X_i$ for $1\le i\le n$. This contradicts the properness of the diagonal action $G'\curvearrowright \prod_{i=1}^nX_i$ since $c$ is of infinite order.  
\end{proof}

We remark that Heisenberg groups satisfy the condition of Lemma \ref{Lem: CenComImpNoPH}.

\subsection{Characterizations of central extensions with bounded Euler classes}\label{subsec: bddEulerClass}

In order to prove our main results, we will characterize bounded Euler classes in different ways. 
Consider a central extension of groups
\begin{align}\label{CentralExt}
    1\to Z\xrightarrow{\iota}E\xrightarrow{\pi}G\to 1
\end{align}
where $Z$ is finitely generated.
Let $[\omega]\in H^2(G,Z)$ be the Euler class of this central extension. In what follows, we will identify $Z$ with $\iota(Z)$. For any $g\in E$, we write $\bar g$ to mean $\pi(g)\in G$. 

According to the structure theorem of finitely generated abelian groups, there exists $r\ge 0$ and a finite abelian group $F$ such that $Z\cong \Z^r\bigoplus F$. Let $\{c_1,\dots, c_r\}$ be a standard generating set for $\Z^r$. 

\begin{proposition}\label{Prop: bddEu}
    For the central extension (\ref{CentralExt}), the following are equivalent:
    \begin{enumerate}
        \item $[\omega]$ is bounded. \label{eq1}
        \item The central extension (\ref{CentralExt}) quasi-splits. \label{eq2}
        \item The central extension (\ref{CentralExt}) admits a quasi-retraction. \label{eq3}
        \item There exist quasi-lines $L_1,\cdots,L_r$ on which $E$ acts lineally and orientably such that the diagonal action of $Z$ on $\prod_{i=1}^rL_i$ is geometric. \label{eq6}
        \item The central extension (\ref{CentralExt}) virtually quasi-splits. \label{eq8}
    \end{enumerate}
\end{proposition}

Shortly before we finished this paper, Fournier-Facio, Mangioni, and Sisto also obtained part of Proposition \ref{Prop: bddEu} independently with similar methods (see \cite[Proposition 2.9]{FMS25}).

\begin{proof}
    The equivalence of $(\ref{eq1})$ and $(\ref{eq2})$ is given by \cite[Lemma 2.9]{FK16}. The equivalence of $(\ref{eq2})$ and $(\ref{eq3})$ is given by \cite[Theorem 1.5]{Wan25}.  ``$(\ref{eq2}) \Rightarrow (\ref{eq8})$'' is obvious. For ``$(\ref{eq3}) \Rightarrow (\ref{eq6})$'', note that by composing the quasi-retraction with projections to each $\Z$-factor, there exist quasimorphisms $\phi_i: E\to \Z$ for $i=1,\dots,r$, such that $\phi_i$ is unbounded on $\langle c_i \rangle$ and bounded on $\langle c_j \rangle$ whenever $i\ne j$. Thus, (\ref{eq6}) is given by Lemma \ref{Lem: TFAE}. 
    Now we only need to prove ``$(\ref{eq6}) \Rightarrow (\ref{eq1})$'' and ``$(\ref{eq8}) \Rightarrow (\ref{eq2})$''.

    For $(\ref{eq6}) \Rightarrow (\ref{eq1})$: We divide the proof into three steps.
    
    \textbf{Step 1: we construct a quasi-homomorphism $\varphi$ from $E$ to $Z$.}
    
    Since the action of $Z$ on $\prod_{i=1}^rL_i$ is geometric, there exists at least one loxodromic element in $Z$ on each $L_i$. It follows from Lemma \ref{Lem: CenterProperty} that $E$ acts lineally and orientably on each $L_i$.  By Lemma \ref{Lem: TFAE}, each orientable lineal action $E\curvearrowright L_i$ gives a homogeneous quasimorphism $\phi_i:E\to \R$. Define $\Phi: E\to \R^r$ by $\Phi(g):=(\phi_1(g),\cdots,\phi_r(g))^T$ for any $g\in E$. As a product of homogeneous quasimorphisms, $\Phi$ is a homogeneous quasi-homomorphism. Denote $K=\sup_{g,h\in E}\|\Phi(gh)-\Phi(g)-\Phi(h)\|_1$. Then for any $c=c_1^{n_1}\cdots c_r^{n_r}\in Z$, 
    \begin{equation}\label{Equ: theta}
        \|\Phi(c)-(\sum_{i=1}^r\Phi(c_i^{n_i}))\|_1=\|\Phi(c)-(\sum_{i=1}^rn_i\Phi(c_i))\|_1=\|\Phi(c)-\Theta(n_1,\cdots,n_r)^T\|_1\le (r-1)K
    \end{equation}
    where $\Theta:=(\Phi(c_1),\cdots,\Phi(c_r))$ is a matrix in $M_r(\R)$. Since the action of $Z$ on $\prod_{i=1}^rL_i$ is geometric, $\Theta\in \gl_r(\R)$. To obtain a quasi-homomorphism from $E$ to $Z$, we need to composite another two quasi-homomorphisms.

    For any $\alpha=(x_1,\ldots,x_r)^T\in \R^r$, denote $\lfloor\alpha\rfloor:=(\lfloor x_1\rfloor,\ldots,\lfloor x_r\rfloor)^T\in \Z^r$. Define two quasi-homomorphisms $\eta: \R^r\to \Z^r$ and $\rho: \Z^r\to Z$ by $\eta(\alpha):=\lfloor\Theta^{-1}\alpha\rfloor$ for any $\alpha\in \R^r$ and $\rho(n_1,\cdots,n_r):=c_1^{n_1}\cdots c_r^{n_r}$ for any $n_1,\ldots,n_r\in \Z$. Note that $\rho$ is a group isomorphism between $\Z^r$ and a finite-index subgroup of $Z$. Let $\varphi:=\rho\circ \eta\circ \Phi: E\to Z$. As a composition of quasi-homomorphisms, $\varphi$ is a quasi-homomorphism from $E$ to $Z$. 
    
    \textbf{Step 2: we show $\varphi|_Z$ is close to the identity map.}

    For any $c=c_1^{n_1}\cdots c_r^{n_r}\in Z$, we denote $\alpha:=\Phi(c)-\Theta(n_1,\cdots,n_r)^T$. By compositing $\eta$ on the left, we have $$\eta(\alpha)=\lfloor\Theta^{-1}\alpha\rfloor=\eta\circ \Phi(c)-(n_1,\cdots,n_r)^T.$$
    Next, by compositing $\rho$ on the left, we have $$\rho(\lfloor\Theta^{-1}\alpha\rfloor)=c^{-1}\varphi(c).$$ Note that Formula (\ref{Equ: theta}) shows $\|\alpha\|_1\le (r-1)K$. Hence, the above equality shows that there exists a finite subset $A\subset Z$ such that $c^{-1}\varphi(c)\in A$ for any $c\in Z$.
    
    \textbf{Step 3: we construct a bounded representative of $[\omega]$.}

    For any $g\in E$, $g=\varphi(g)\cdot (\varphi(g)^{-1}g)$. Since $\varphi$ is a quasi-homomorphism and $\varphi(g)^{-1}\in Z$, one has
    
    $$\varphi(\varphi(g)^{-1}g)\in \varphi(\varphi(g)^{-1})\varphi(g)\Delta(\varphi)\in \varphi(g)^{-1}\varphi(g)A\cdot \Delta(\varphi)=A\cdot \Delta(\varphi).$$
    
    Denote $B=A\cdot \Delta(\varphi)$. The above equality shows that for any $g\in E$, $\varphi(g)^{-1}g\in \varphi^{-1}(B)$. Since $\varphi(g)\in Z=\ker(\pi)$,  $\pi(\varphi(g)^{-1}g)=\pi(g)$. This allows us to pick a section $s: G\to E$ such that $s(G)\subseteq \varphi^{-1}(B)$. Let $\omega_s: G\times G\to Z$ given by $\omega_s(g,h)=s(g)s(h)s(gh)^{-1}$ be the corresponding cocycle. Since the image of $\omega_s$ lies in $Z$, \textbf{Step 2} implies the equivalence between boundedness of $\omega_s$ and boundedness of $\varphi(\omega_s)$. The latter is clearly bounded since $s(G)\subseteq \varphi^{-1}(B)$. Therefore, the Euler class $[\omega_s]$ is bounded.

    For $(\ref{eq8}) \Rightarrow (\ref{eq2})$: By definition, there exists a finite-index subgroup $E'\le E$ such that the following central extension $1\to Z'\to E'\xrightarrow{\pi}G'\to 1$ quasi-splits where $Z'=Z\cap E'$ and $G'=\pi(E')$. By the equivalence of (\ref{eq2}) and (\ref{eq3}), there exists a quasi-retraction $\Phi':E'\to Z'$. Assume $Z'=\Z^r$ that embeds into $R=\R^r$ canonically. We consider $\Phi'$ as a quasi-homomorphism with values in $R$. By Lemma \ref{Lem: TransferMap}, $\Phi': E'\to R$ induces a quasi-homomorphism $\hat \T(\Phi'): E\to R$ whose restriction on $Z'$ is the identity map. 
    
    Denote  $\bar E=E/Z', \bar Z=Z/Z'$. Then $G=E/Z\cong \frac{E/Z'}{Z/Z'}=\bar E/\bar Z$. The original central extension $1\to Z\to E\xrightarrow{\pi} G\to 1$ is splitted into the following two central extensions
    \begin{equation}\label{Equ: SplCenExt1}
        1\to Z'\to E\xrightarrow{\pi_1} \bar E\to 1
    \end{equation}
    and 
    \begin{equation}\label{Equ: SplCenExt2}
        1\to \bar Z\to \bar E\xrightarrow{\pi_2} G\to 1.
    \end{equation}
    Since the map $\hat \T(\Phi'): E\to R$ is a quasi-homomorphism whose restriction on $Z'$ is the identity map, the equivalence of (\ref{eq2}) and (\ref{eq3}) shows that the central extension (\ref{Equ: SplCenExt1}) quasi-splits. Note that $\bar Z$ is finite since $Z'$ is of finite index in $Z$. Thus any normalized section of the central extension (\ref{Equ: SplCenExt2}) is a quasi-splitting. 

    Let $s_1: \bar E\to E$ and $s_2: G\to \bar E$ be two quasi-splittings. Define a map $s: G\to E$ as the composite map $s_1\circ s_2$. Then $s$ is the desired quasi-splitting from $G$ to $E$. 

\end{proof}

\begin{remark}\label{Rmk: Phi(s(x))=1}
     (1) Suppose that $[\omega]$ is bounded. Let $s: G\to E$ be the quasi-splitting given by (\ref{eq2}). From the proof of \cite[Theorem 3.15]{Wan25}, the map $\phi: E\to Z$ defined by $\phi(g)=gs(\bar g)^{-1}$ is a desired quasi-retraction for (\ref{eq3}). Note that $\phi(g)=1$ for any $g\in s(G)$. 
     According to Remark \ref{Rmk: STL}, this implies that $s(G)$ has a bounded orbit on each quasi-line $L_i$ obtained in (\ref{eq6}).

     (2) When the group extension is not a central extension, Items (\ref{eq2}), (\ref{eq3}), (\ref{eq8}) in Proposition \ref{Prop: bddEu} may not be equivalent anymore although the relation ``(\ref{eq3}) $\Rightarrow$ (\ref{eq2}) $\Rightarrow$ (\ref{eq8})'' always holds. In \cite{Wan25}, the second author shows that a general group extension admits a quasi-retraction if and only if it quasi-splits and the image of this quasi-splitting almost commutes with the normal subgroup. Besides, there exists a group extension which virtually quasi-splits but not quasi-splits. For example, the surjective homomorphism from $F_2$ to the infinite dihedral group $D_{\infty}$ gives a short exact sequence of groups $1\to H\to F_2\to D_{\infty}\to 1$. Since $D_{\infty}$ is virtually cyclic, it is easy to see this extension virtually quasi-splits. However, it does not quasi-split since any unbounded quasi-homomorphism to $F_2$ is either a homomorphism or a quasimorphism with images in a cyclic subgroup \cite[Theorem 4.1]{FK16}.
\end{remark}

The following corollary is a complement to Proposition \ref{Prop: bddEu} (\ref{eq6}). 

\begin{corollary}\label{Cor: bddEuCor}
    If $[\omega]$ is bounded and $G$ admits a proper action on a metric space $X$, then the induced action of $E$ on $X\times \prod_{i=1}^r L_i$ by $g\cdot (x, y_1,\ldots, y_r):=(\bar gx, gy_1,\ldots, gy_r)$ is proper where $L_1,\cdots,L_r$ are provided by Proposition \ref{Prop: bddEu} (\ref{eq6}). If in addition that $G$ is generated by a finite set $S$, then the induced action of $E$ on $\G(G,S)\times \prod_{i=1}^r L_i$ is geometric.
\end{corollary}

\begin{proof}
    By Proposition \ref{Prop: bddEu} (\ref{eq2}), there exists a quasi-splitting $s: G\to E$. Any group element $g\in E$ has a decomposition $g=g_Zg_G$, where $g_Z=g\cdot s(\bar g)^{-1}$ and $g_G=s(\bar g)$. Note that $g_Z\in Z$ and $\bar g=\bar{g_G}$. Fix a basepoint $(x, o_1,\ldots, o_r)\in X\times \prod_{i=1}^r L_i$. For any $C>0$, we denote $B(C):=\{g\in E: |x-\bar gx|+\sum_{i=1}^r|o_i-go_i|\le C\}$. Let $g\in B(C)$. There are only finitely many choices of $g_G=s(\bar g)$ in $E$ since $\{\bar g\in G : |x-\bar gx|\le C\}$ is finite. Now we fix such a choice of $g_G$. By triangle inequality, $$\sum_{i=1}^r|o_i-g_Zo_i|\le \sum_{i=1}^r(|o_i-g_Zg_Go_i|+|o_i-g_Go_i|)=\sum_{i=1}^r|o_i-go_i|+\sum_{i=1}^r|o_i-g_Go_i|\le C+\sum_{i=1}^r|o_i-g_Go_i|.$$
    Since the action $Z\curvearrowright \prod_{i=1}^r L_i$ is geometric, there are only finitely many choices of $g_Z$. This shows that $B(C)$ is finite for any $C>0$, which gives the properness of the diagonal action $E\curvearrowright X\times \prod_{i=1}^r L_i$.

    Suppose that $G$ is generated by a finite set $S$. By the above arguments, we only need to show that the induced action $E\curvearrowright \G(G,S)\times \prod_{i=1}^r L_i$ is cobounded. Fix a basepoint $(1, o_1,\ldots, o_r)\in \G(G,S)\times \prod_{i=1}^r L_i$ and an arbitrary point $(x, y_1,\ldots, y_r)\in \G(G,S)\times \prod_{i=1}^r L_i$. Since $Z\curvearrowright \prod_{i=1}^r L_i$ is geometric, there exists $c\in Z$ such that $c\cdot(o_1,\cdots,o_r)$ is uniformly close to $(y_1,\cdots, y_r)$. Let $g=s(x)c\in E$. By Remark \ref{Rmk: Phi(s(x))=1}, $s(G)$ has a bounded orbit on $\prod_{i=1}^r L_i$. Hence, $g\cdot(1,o_1,\cdots,o_r)$ is uniformly close to $(x,y_1,\cdots, y_r)$, which completes the proof of coboundedness.
\end{proof}

\begin{corollary}\label{Cor: ExtHasPH}
    Assume that $[\omega]$ is bounded. If $G$ has property PH or QT, then $E$ has property PH or QT respectively. 
\end{corollary}

\begin{proof}
    Now assume that $G$ has property PH. Then $G$ has a finite-index subgroup $G'$ with property PH$_0$. Denote $E'=\pi^{-1}(G')$, which is a finite-index subgroup of $E$. Note that $Z$ is contained in the center of $E'$. Hence, one has the following central extension  $$1\to Z\to E'\xrightarrow{\pi'} G'\to 1$$ where $\pi'=\pi|_{E'}$. The corresponding Euler class $[\omega']$ is bounded since $[\omega]$ is bounded. Therefore, $E'$ has property PH$_0$ by Corollary \ref{Cor: bddEuCor}, and the conclusion follows.

    Next, we assume that $G$ is finitely generated and has property QT. By the Svarc-Milnor lemma, the geometric action $E\curvearrowright \G(G,S)\times \prod_{i=1}^rL_i$ in Corollary \ref{Cor: bddEuCor} gives a finite generating set $F$ of $E$ such that the orbit map $\G(E,F)\to \G(G,S)\times \prod_{i=1}^rL_i$ is a quasi-isometry. Composing this equivariant quasi-isometry with the QT embedding of $G$, one obtains that $E$ also has property QT.
\end{proof}

\begin{corollary}\label{Eu_is_bdd}
    If $E$ has property PH, then $[\omega]$ is bounded. 
\end{corollary}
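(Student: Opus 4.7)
The plan is to reduce the problem to Lemma \ref{Lem: bddEu}, using property (PH) of $E$ to produce quasi-lines witnessing condition (\ref{eq6}) (possibly after passing to a finite-index subgroup, at which point condition (\ref{eq8}) will take over).

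First, I would unpack property (PH): by definition $E$ contains a finite-index subgroup $E'$ which has property (PH'), so $E'$ acts coboundedly on hyperbolic spaces $X_1,\dots,X_n$ with proper diagonal action on $\prod_{i=1}^n X_i$. Next, set $Z' := Z\cap E'$. Since $Z$ is central in $E$ and finitely generated of rank $r$, the subgroup $Z'$ is central in $E'$ and still has rank $r$ (finite index inside a finitely generated abelian group preserves the free rank). I can therefore apply Lemma \ref{Lem: PHToQL} to $E'$ with its central subgroup $Z'$, obtaining a sub-family of quasi-lines $L_1,\dots,L_r\subseteq\{X_1,\dots,X_n\}$ on which the diagonal action of $Z'$ is geometric. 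By Lemma \ref{Lem: CenterProperty}(2), each $L_i$ carries an orientable lineal action of the whole group $E'$.

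Now consider the restricted central extension
\[
1\to Z'\to E'\xrightarrow{\pi|_{E'}} G'\to 1,
\]
where $G'=\pi(E')$, and denote its Euler class by $[\omega']$. The quasi-lines $L_1,\dots,L_r$ constructed above realize condition (\ref{eq6}) of Lemma \ref{Lem: bddEu} for this extension, so $[\omega']$ is bounded, or equivalently, this extension quasi-splits. By definition this means that the original extension (\ref{CentralExt}) \emph{virtually} quasi-splits. Applying the implication (\ref{eq8}) $\Rightarrow$ (\ref{eq1}) of Lemma \ref{Lem: bddEu} then yields boundedness of $[\omega]$, which is the desired conclusion.

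The proof is essentially bookkeeping: the heavy lifting has already been done in Lemmas \ref{Lem: PHToQL} and \ref{Lem: bddEu}. The only subtle point is ensuring that passing from $E$ to the finite-index subgroup $E'$ preserves the rank of the central factor. This is automatic because the free rank of a finitely generated abelian group is invariant under passage to finite-index subgroups, which is what makes the equivalence (\ref{eq8}) $\Leftrightarrow$ (\ref{eq1}) applicable in the last step.
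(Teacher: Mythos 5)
Your proposal is correct and follows essentially the same route as the paper: pass to a finite-index subgroup $E'$ with property (PH'), apply Lemma \ref{Lem: PHToQL} to $Z'=Z\cap E'$ to produce the quasi-lines witnessing condition (\ref{eq6}) of Lemma \ref{Lem: bddEu} for the restricted extension, conclude that it quasi-splits, and then invoke the implication (\ref{eq8}) $\Rightarrow$ (\ref{eq1}) for the original extension. The extra remark on rank preservation under passing to $Z'$ is harmless bookkeeping that the paper leaves implicit.
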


\begin{proof}
    By Proposition \ref{Prop: bddEu}, it suffices to show the central extension (\ref{CentralExt}) virtually quasi-splits. 
    
    Let $E'\le E$ be a finite-index subgroup that has property PH$_0$. Let $Z'=Z\cap E'$ and $G'=\pi(E')$. Consider the following central extension $1\to Z'\to E'\xrightarrow{\pi}G'\to 1$. By Lemma \ref{Lem: PHToQL}, there exist quasi-lines $L_1,\cdots,L_r$ on which $E'$ acts lineally such that the diagonal action of $Z'$ on $\prod_{i=1}^rL_i$ is geometric. This shows that the above central extension satisfies Proposition \ref{Prop: bddEu} (\ref{eq6}). As a result of Proposition \ref{Prop: bddEu}, the above central extension quasi-splits, which amounts to saying that the original central extension (\ref{CentralExt}) virtually quasi-splits.
\end{proof}

Note that Corollary \ref{Cor: ExtHasPH} gives one direction of Theorem \ref{MainThm1} and Corollary \ref{Eu_is_bdd} gives part of the other direction. We will complete the remaining proof in Section \ref{Sec: PHUnderQuotient}.

\section{Central quotients}\label{Sec: PHUnderQuotient}
Keep the setup of Subsection \ref{subsec: bddEulerClass}: $1\to Z\xrightarrow{\iota}E\xrightarrow{\pi}G\to 1$ is a central extension of groups where $Z$ is finitely generated and $[\omega]\in H^2(G,Z)$ is the corresponding Euler class. We identify $\iota(Z)$ with $Z$ and write $\bar g$ to mean $\pi(g)\in G$.

\subsection{Constructing hyperbolic spaces for central quotients}\label{subsec: ConsHypQuoSpace}

The first result of this section is the following proposition, which provides the proof idea of Theorem \ref{MainThm1}. 
\begin{proposition}\label{Prop: QuoHasPH}
    If $E$ has property PH$_0$, then $G$ has property PH$_0$.
\end{proposition}

Let $X$ be a geodesic metric space and $\Gamma$ be a group acting isometrically on $X$. Let $H\lhd \Gamma$. Recall that there is a well-defined quotient metric on $X/H$, and $\Gamma/H$ acts isometrically on $X/H$.


\begin{lemma}\label{Lem: QuoHypSpace}
Let $\Gamma$ be a group acting coboundedly on a hyperbolic space $X$. Let $c$ be a central element of $\Gamma$. If $c$ is elliptic on $X$, then $X/\langle c\rangle$ is a hyperbolic space quasi-isometric to $X$. If $c$ is loxodromic on $X$, then $X/\langle c\rangle$ is a bounded space.
\end{lemma}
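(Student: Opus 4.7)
The plan is to handle the elliptic and loxodromic cases separately, both relying crucially on the centrality of $c$.

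\textbf{Elliptic case.} My first step is to show that all $\langle c\rangle$-orbits in $X$ have uniformly bounded diameter. Fix a basepoint $o$; by ellipticity the orbit $\langle c\rangle o$ is bounded by some $R_0$, and by coboundedness $\Gamma o$ is $D$-dense in $X$ for some $D$. For any $x \in X$, choose $g \in \Gamma$ with $d(x, go) \leq D$; using centrality $c^n g = g c^n$ and the fact that $g$ acts isometrically I obtain $d(x, c^n x) \leq 2D + d(go, g c^n o) = 2D + d(o, c^n o) \leq 2D + R_0 =: R$. Triangle inequality then shows that the quotient projection $\pi : X \to X/\langle c\rangle$ is a $(1, R)$-quasi-isometry: the upper bound $d_{X/\langle c\rangle}([x],[y]) \leq d_X(x,y)$ is trivial, while the lower bound $d_{X/\langle c\rangle}([x],[y]) \geq d_X(x,y) - R$ follows by applying the triangle inequality with any $c^n y$ in the orbit of $y$. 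Since $X/\langle c\rangle$ is a geodesic metric space (quotient of a geodesic space by an isometric action) and hyperbolicity is a quasi-isometry invariant for geodesic spaces, the first statement follows.

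\textbf{Loxodromic case.} By Lemma \ref{Lem: CenterProperty}(2), the action $\Gamma \curvearrowright X$ is orientable lineal, and coboundedness then forces $X$ itself to be a quasi-line (any $\Gamma$-orbit is quasi-isometric to $\R$ and coarsely dense). Lemma \ref{Lem: TFAE} supplies a homogeneous quasimorphism $\phi : \Gamma \to \R$ so that the orbit map $g \mapsto go$ is a quasi-conjugacy with the translation quasi-action $(g, t) \mapsto t + \phi(g)$ on $\R$, and $\phi(c) \neq 0$ because $c$ is loxodromic. For any $x \in X$, pick $g \in \Gamma$ with $d(x, go) \leq D$, and then choose $n \in \Z$ so that $|\phi(g) - n \phi(c)| \leq |\phi(c)|$; the quasi-conjugacy yields $d(go, c^n o) \leq C$ for a uniform constant $C$. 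Hence $d(x, c^n o) \leq D + C$, so $[x]$ lies within distance $D + C$ of $[o]$ in $X/\langle c\rangle$. This makes the quotient bounded.

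\textbf{Main obstacle.} The only subtle step is upgrading from boundedness of the single orbit $\langle c\rangle o$ to uniform boundedness of \emph{all} $\langle c\rangle$-orbits in the elliptic case; this is precisely where centrality of $c$ is indispensable, since it lets each $g \in \Gamma$ conjugate one orbit to another without distortion. Once this uniform bound is in place, the quotient map is a near-isometry and hyperbolicity transports automatically. In the loxodromic case the only mild subtlety is invoking the quasimorphism from Lemma \ref{Lem: TFAE} and noting $\phi(c) \neq 0$; the Diophantine-type approximation that follows is elementary.
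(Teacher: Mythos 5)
Your proof is correct and follows the same two-case strategy as the paper: the loxodromic case is handled exactly as in the paper via Lemma \ref{Lem: CenterProperty} (the paper simply observes that $X$ is then a quasi-line on which $\langle c\rangle$ acts coboundedly, which your quasimorphism computation makes explicit). For the elliptic case the paper outsources the argument to \cite[Lemma 4.10]{BFFG24}, whereas you prove it directly; your uniform-boundedness-of-orbits argument via centrality and coboundedness is precisely the expected content of that citation, so the two proofs coincide in substance.
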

\begin{proof}
If $c$ is elliptic on $X$, then the conclusion can be deduced from \cite[Lemma 4.10]{BFFG24}. If $c$ is loxodromic on $X$, then Lemma \ref{Lem: CenterProperty} shows that $X$ is a quasi-line. So $X/\langle c\rangle$ is a bounded space.
\end{proof}

From now on, we assume that $E$ has property PH$_0$ from actions on $X_1,\dots,X_n$.

According to the structure theorem of finitely generated abelian groups, there exist $r\in \N$ and a finite abelian group $F$ such that $Z\cong \Z^r\bigoplus F$. By Lemma \ref{Lem: QuoHypSpace}, each quotient space $X_i/F$ is still hyperbolic and the induced diagonal action $E/F\curvearrowright \prod_{i=1}^nX_i/F$ is still proper. Based on this fact, we can assume that $Z$ is infinite (i.e. $r\ge 1$) and torsion-free. Let $\{c_1,\ldots,c_r\}$ be a standard generating set of $Z$. 

For simplicity, we denote  $\bar X_i:=X_i/Z$ for each $1\le i\le n$. Up to re-ordering, let $\bar X_1, \ldots, \bar X_m(1\le m\le n)$ be all bounded quotient spaces. We collect some basic properties about $\bar X_i(1\le i\le n)$.

\begin{lemma}\label{Lem: QuoSpaProperty}
    ~
    \begin{enumerate}
        \item\label{it1} For $1\le i\le m$, $X_i$ is a quasi-line and the action of $E$ on $X_i$ is orientable lineal.
        \item\label{it2} For each $1\le i\le n$, $\bar X_i$ is hyperbolic and the following are equivalent:
        \begin{itemize}
            \item[(i)] $m+1\le i\le n$, i.e. $\bar X_i$ is unbounded;
            \item[(ii)] $Z$ has an elliptic action on $X_i$;
            \item[(iii)] $X_i$ is quasi-isometric to $\bar X_i$.
        \end{itemize}
        \item\label{it3} The integer $m$ satisfies $r\le m\le n$. Moreover, if $m=n$, then $E$ and $G$ are two finitely generated elementary (i.e. virtually abelian) groups.
    \end{enumerate}
\end{lemma}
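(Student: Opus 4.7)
The plan is to prove the three items in the order (2), (1), (3), because the characterization in (2) feeds into (1) and both then supply what is needed for (3). Throughout, I assume each $X_i$ is unbounded, since bounded factors contribute nothing to property (PH') and can be discarded from the list.

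For part (2) I would establish the cycle (ii) $\Rightarrow$ (iii) $\Rightarrow$ (i) $\Rightarrow$ (ii). The first implication is an induction on the rank of $Z$: if $Z$ acts elliptically on $X_i$, each $c_j$ does, and Lemma \ref{Lem: QuoHypSpace} gives $X_i \sim X_i/\langle c_1\rangle$ with the quotient hyperbolic, acted on coboundedly by $E/\langle c_1\rangle$, with $Z/\langle c_1\rangle$ still central and elliptic there. Iterating $r$ times yields $X_i \sim \bar X_i$ and in particular hyperbolicity of $\bar X_i$. The step (iii) $\Rightarrow$ (i) is immediate since $X_i$ is unbounded. For (i) $\Rightarrow$ (ii) I argue by contraposition: if $Z$ is not elliptic on $X_i$, some generator $c_j$ must fail to be elliptic, because in a cobounded action any central element $c$ has uniformly bounded displacement $d(y,cy) \leq 2R + d(x,cx)$ (using centrality and coboundedness radius $R$), so repeated triangle inequality along the decomposition $c_1^{a_1}\cdots c_r^{a_r}$, with each intermediate factor acting as an isometry, yields $d(x, c_1^{a_1}\cdots c_r^{a_r} x) \leq \sum_j d(x, c_j^{a_j} x)$; ellipticity of every generator would then force all $Z$-orbits bounded, contradicting the hypothesis. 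Lemma \ref{Lem: CenterProperty}(1) upgrades the non-elliptic $c_j$ to loxodromic, and Lemma \ref{Lem: QuoHypSpace} makes $X_i/\langle c_j\rangle$, and a fortiori $\bar X_i$, bounded, giving $i \leq m$.

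Part (1) then follows immediately: for $i \leq m$, $\bar X_i$ is bounded while $X_i$ is unbounded, so $Z$ cannot act elliptically on $X_i$ by the equivalence just proved, and the argument of the previous paragraph produces a loxodromic central element. Lemma \ref{Lem: CenterProperty}(2) now delivers both the quasi-line structure of $X_i$ and the orientable lineality of the $E$-action. The main technical obstacle of the whole proof lies precisely in the step ``each generator elliptic implies $Z$ elliptic''; the naive triangle inequality is not intrinsic enough in an arbitrary hyperbolic space, and one genuinely needs coboundedness to upgrade pointwise ellipticity into the uniform displacement bound sketched above.

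For part (3), $m \leq n$ is tautological. To get $r \leq m$, I observe that by part (2) the $Z$-action on each $X_i$ with $i > m$ is elliptic, so its diagonal $\ell^1$-action on $\prod_{i=m+1}^n X_i$ has bounded orbits; properness of $E \curvearrowright \prod_{i=1}^n X_i$ therefore descends to properness of $Z \curvearrowright \prod_{i=1}^m X_i$. Part (1) now exhibits $Z$ as satisfying property (PH') from orientable lineal actions, and Lemma \ref{Lem: OriLineal} (equivalently Theorem \ref{IntroThm: PHFromLineal}) gives $r = \mathrm{rank}(Z) \leq m$. For the ``moreover'' clause, if $m = n$ then $E$ itself acts orientably lineally on every $X_i$ and still satisfies (PH') from those actions, so Theorem \ref{IntroThm: PHFromLineal} forces $E$ to be a finitely generated FZ group, hence virtually abelian; $G = E/Z$ then inherits finite generation and virtual abelianness as a quotient.
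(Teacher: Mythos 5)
Your proof is correct and follows essentially the same route as the paper: part (1) via Lemma \ref{Lem: CenterProperty}, part (2) via Lemma \ref{Lem: QuoHypSpace}, and part (3) by discarding the factors where $Z$ is elliptic and applying Lemma \ref{Lem: OriLineal}. You merely make explicit the details the paper elides (the reduction from ellipticity of each generator $c_j$ to ellipticity of $Z$, and the reordering of (1) and (2) so that the non-ellipticity of $Z$ on $X_i$ for $i\le m$ is available before invoking Lemma \ref{Lem: CenterProperty}), which is a faithful filling-in rather than a different argument.
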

\begin{proof}
    (1) This follows from Lemma \ref{Lem: CenterProperty}. 

    (2) This follows easily from Lemma \ref{Lem: QuoHypSpace}. 

    (3) Since each $\bar X_i$ is unbounded for $m+1\le i\le n$, it follows from Item (\ref{it2}) that $Z$ has an elliptic action on $\prod_{i=m+1}^n X_i$. Thus the proper diagonal action $Z\curvearrowright \prod_{i=1}^nX_i$ passes to a proper diagonal action $Z\curvearrowright \prod_{i=1}^mX_i$. By Item (\ref{it1}), these orientable lineal actions $Z\curvearrowright X_1,\cdots,Z\curvearrowright X_m$ give the PH$_0$ property of $Z$. Then it follows from Lemma \ref{Lem: OriLineal} that $r\le m$. Moreover, if $m=n$, then all $\bar X_i(1\le i\le n)$ are bounded. By Item (\ref{it1}) again, $E$ has property PH$_0$ from orientable lineal actions. As a result of Lemma \ref{Lem: OriLineal}, $E$ is a finitely generated elementary group. As a quotient of $E$, $G$ is also finitely generated and elementary.
\end{proof}

Now we are in a position to prove Proposition \ref{Prop: QuoHasPH}. 

\begin{proof}[Proof of Proposition \ref{Prop: QuoHasPH}]
    
    We will divide the proof into two steps.

    \textbf{Step 1: we construct auxiliary quasi-lines on which $G$ acts lineally.} 

   Recall from Lemma \ref{Lem: QuoSpaProperty} that each $X_i(1\le i\le m)$ is a quasi-line and the action of $E$ on $X_i$ is orientable lineal. By \cite[Lemma 3.7]{Man06}, there exists a uniform constant $\epsilon\ge 0$ such that each $X_i$ is $(1,\epsilon)$-quasi-isometric to $\R$. Denote $f_i: X_i\to \R$ as a $(1,\epsilon)$-quasi-isometry. It is easy to see the map $\chi_i: E\to \R$ given by $\chi_i(g):=f_i\circ g\circ f_i^{-1}(0)$ is an unbounded quasimorphism for $1\le i\le m$. 
   
   Since $E$ has property PH$_0$, Corollary \ref{Eu_is_bdd} shows that $[\omega]$ is bounded. This is equivalent to the existence of a quasi-splitting $s: G\to E$ by Proposition \ref{Prop: bddEu}. Then the composite map $\chi_i'=\chi_i\circ s: G\to \R$ is also a (possibly bounded) quasimorphism. WLOG, we assume that $\chi_1',\ldots,\chi_k'$ are all unbounded quasimorphisms.  If $k=0$, then there is nothing to supplement in this step. If $1\le k\le m$, then \cite[Lemma 4.15]{ABO19} shows that for each $1\le i\le k$, there exists a generating set $S_i$ of $G$ such that the map $\chi_i'$ gives a quasi-isometry between $\G(G,S_i)$ and $\R$. Let $\lambda\ge 1, \epsilon'\ge 0$ be two constants such that $\chi_i': \G(G,S_i)\to \R$ is a $(\lambda,\epsilon')$-quasi-isometry for each $1\le i\le k$. 

   \textbf{Step 2: we show the diagonal action $G\curvearrowright \prod_{i=1}^k\G(G,S_i)\times \prod_{i=m+1}^n\bar X_i$ is proper.} 
   
   Fix a basepoint $(x_1,\cdots,x_n)\in \prod_{i=1}^nX_i$ with $x_i\in f_i^{-1}(0)$ for $1\le i\le m$. Let $\bar x_i\in \bar X_i$ be the image of $x_i\in X_i$ under the quotient map.

   \begin{claim}
       There exists a constant $M>0$ such that $$\sum_{i=1}^k|\bar g|_{S_i}+\sum_{i=m+1}^n|\bar x_i-\bar g\bar x_i|\ge \lambda^{-1}(\sum_{i=1}^n|x_i-s(\bar g)x_i|)-M$$ for any $\bar g\in G$.
   \end{claim}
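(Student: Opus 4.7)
The strategy is to establish, for each index $i \in \{1, \ldots, n\}$, an individual upper bound on the upstairs distance $|x_i - s(\bar g) x_i|$ in $X_i$ in terms of the corresponding term appearing on the left-hand side of the claim (plus an additive constant), and then to sum these estimates over $i$ and divide by $\lambda$. The argument naturally splits into three cases matching the ranges $1 \le i \le k$, $k+1 \le i \le m$, and $m+1 \le i \le n$.

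For $1 \le i \le k$, I would use that $\chi_i' : \mathcal{G}(G, S_i) \to \mathbb{R}$ is a $(\lambda, \epsilon')$-quasi-isometry. Taking $f_i^{-1}(0)$ to be the basepoint $x_i$, unwinding the definition gives $\chi_i'(\bar g) = \chi_i(s(\bar g)) = f_i(s(\bar g) x_i)$. Since $f_i(x_i) = 0$ and $f_i$ is a $(1,\epsilon)$-quasi-isometry, one obtains $|s(\bar g) x_i - x_i| \le |\chi_i'(\bar g)| + \epsilon \le \lambda |\bar g|_{S_i} + (\epsilon + \epsilon')$. For $k+1 \le i \le m$, the quasimorphism $\chi_i'$ is bounded by definition, so exactly the same estimate $|s(\bar g) x_i - x_i| \le |\chi_i'(\bar g)| + \epsilon$ gives a uniform bound, contributing only to the constant $M$.

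The main obstacle is the last range $m+1 \le i \le n$, where I must convert a distance in the quotient $\bar X_i$ back into a distance upstairs in $X_i$. The key input is Lemma \ref{Lem: QuoSpaProperty} (\ref{it2}): in this regime $Z$ acts elliptically on $X_i$, so the orbit $Z \cdot x_i$ has some finite diameter $D_i$. The definition of the quotient metric yields a $z \in Z$ (depending on $\bar g$) with $|x_i - z \cdot s(\bar g) x_i| \le |\bar x_i - \bar g \bar x_i| + 1$. Centrality of $Z$ is crucial here: since $z$ commutes with $s(\bar g)$, we have $z \cdot s(\bar g) x_i = s(\bar g)(z x_i)$, and the triangle inequality together with the isometric action of $E$ delivers $|s(\bar g) x_i - x_i| \le |\bar x_i - \bar g \bar x_i| + 1 + D_i$.

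Summing the three inequalities over all $i$ yields $\sum_{i=1}^n |s(\bar g) x_i - x_i| \le \lambda \sum_{i=1}^k |\bar g|_{S_i} + \sum_{i=m+1}^n |\bar x_i - \bar g \bar x_i| + C$ for a constant $C$ depending only on $n$, $\lambda$, $\epsilon$, $\epsilon'$, the $B_i$'s, and the $D_i$'s. Dividing by $\lambda$ and using $\lambda^{-1} \le 1$ to drop the factor in front of the quotient distances gives the claim with $M := \lambda^{-1} C$.
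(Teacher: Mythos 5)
Your proposal is correct and follows essentially the same route as the paper: the same three-way case split on $i$, the same use of the $(\lambda,\epsilon')$-quasi-isometry $\chi_i'$ for $1\le i\le k$, boundedness of $\chi_i'$ for $k+1\le i\le m$, and ellipticity of $Z$ on $X_i$ plus the quotient-metric definition for $m+1\le i\le n$. The only cosmetic differences are that you invoke centrality of $Z$ where the paper just uses that $s(\bar g)$ acts by isometry, and you allow a $+1$ slack in realizing the quotient distance where the paper asserts equality; neither affects the argument.
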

   \begin{proof}[Proof of Claim]
       We prove the claim in three cases according to the range of $i$.
       
       Firstly, we estimate $|x_i-s(\bar g)x_i|$ for $1\le i\le k$. Note that each $\chi_i'$ is a $(\lambda,\epsilon')$-quasi-isometry. Hence, $|\chi_i'(\bar g)|=|\chi_i(s(\bar g))|\le \lambda |\bar g|_{S_i}+\epsilon'$. By definition of $\chi_i$, $\chi_i(s(\bar g))=f_i\circ s(\bar g)\circ f_i^{-1}(0)=f_i(s(\bar g)x_i)$. Since each $f_i$ is a $(1,\epsilon)$-quasi-isometry and $f_i(x_i)=0$, one gets that $$|x_i-s(\bar g)x_i|\le |f_i(x_i)-f_i(s(\bar g)x_i)|+\epsilon=|\chi_i(s(\bar g))|+\epsilon=|\chi_i'(\bar g)|+\epsilon\le \lambda |\bar g|_{S_i}+\epsilon+\epsilon'.$$

       Secondly, we estimate $|x_i-s(\bar g)x_i|$ for $k+1\le i\le m$. In this case, each $\chi_i': G\to \R$ is a bounded quasimorphism. The same arguments as above show that $$|x_i-s(\bar g)x_i|\le |f_i(x_i)-f_i( s(\bar g)x_i)|+\epsilon=|\chi_i'(\bar g)|+\epsilon\le M_1$$ for some $M_1>0$ independent of $\bar g$.
       
       Finally, we estimate $|x_i-s(\bar g)x_i|$ for $m+1\le i\le n$. In this case, $Z$ has an elliptic action on each $X_i$ since $\bar X_i(m+1\le i\le n)$ are still unbounded hyperbolic spaces. Thus there exists a constant $M_2>0$ such that $|x_i-cx_i|\le M_2$ for any $c\in Z$. By the definition of quotient metric, one has that $|\bar x_i-\bar g\bar x_i|=|x_i-s(\bar g)cx_i|$ for some $c\in Z$. Therefore, by triangle inequality, $$|x_i-s(\bar g)x_i|\le | x_i-s(\bar g)cx_i|+|x_i-cx_i|= |\bar x_i-\bar g\bar x_i|+|x_i-cx_i|\le |\bar x_i-\bar g\bar x_i|+M_2.$$

       In conclusion, we get that 
       \begin{align*}
           & \sum_{i=1}^k|\bar g|_{S_i}+\sum_{i=m+1}^n|\bar x_i-\bar g\bar x_i| \\
           \ge & \lambda^{-1}\sum_{i=1}^k|x_i-s(\bar g) x_i|-k\lambda^{-1}(\epsilon+\epsilon')+\sum_{i=m+1}^n|x_i-s(\bar g) x_i|-M_2(n-m) \\
           \ge & \lambda^{-1}\sum_{i=1}^k|x_i-s(\bar g)x_i|+\sum_{i=k+1}^n|x_i-s(\bar g) x_i|-k\lambda^{-1}(\epsilon+\epsilon')-M_2(n-m)-M_1(m-k) \\
           \ge & \lambda^{-1}(\sum_{i=1}^n|x_i-s(\bar g) x_i|)-M
       \end{align*}
       where $M=k\lambda^{-1}(\epsilon+\epsilon')+M_2(n-m)+M_1(m-k)$.
   \end{proof}
   Since the diagonal action $E\curvearrowright \prod_{i=1}^nX_i$ is proper, it follows from the above Claim that the diagonal action $G\curvearrowright \prod_{i=1}^k\G(G,S_i)\times \prod_{i=m+1}^n\bar X_i$ is also proper. This gives the PH$_0$ property of $G$.
\end{proof}

\subsection{Proof of Theorem \ref{MainThm1}}

    Finally, we can complete the proof of Theorem \ref{MainThm1}.

\begin{proof}[Proof of Theorem \ref{MainThm1}]

    Recall that Corollary \ref{Cor: ExtHasPH} gives the backward direction of Theorem \ref{MainThm1}. 
    
    For the forward direction, let $E'$ be a finite-index subgroup of $E$ so that $E'$ has property PH$_0$. First note that $Z\cap E'$ is finitely generated by Lemma \ref{Lem: PHToQL}, so $Z$ is finitely generated. 
    Then Corollary \ref{Eu_is_bdd} shows that the Euler class is bounded. The remaining proofs are divided into two parts.

    \textbf{Part I: we show that if $E$ has property PH, then $G$ has property PH.}
    
    Consider the following central extension of groups $$1\to Z'\to E'\to G'\to 1$$ 
    where $Z'=Z\cap E'$ and $G'$ is the image of $\pi|_{E'}$, which is a finite-index subgroup of $G$. Let $[\omega']$ be the corresponding Euler class. By Proposition \ref{Prop: QuoHasPH}, $G'$ has property PH$_0$. Hence, $G$ has property PH since $[G:G']<\infty$. 

    \textbf{Part II: we show that if $E$ has property QT, then $G$ has property QT.}

    \textbf{Step 1: if $E$ has property QT$_0$, then $G$ has property QT$_0$.}

    According to the proof of Lemma \ref{Lem: QTImpPH}, there exist finitely many quasi-trees $T_i(1\le i\le n)$ on which $E$ acts coboundedly and the orbit map from $E$ to the product $X=\prod_{i=1}^nT_i$ equipped with $\ell^1$-metric is a quasi-isometric embedding. Since $E$ has property QT$_0$ and thus PH, $[\omega]$ is bounded by Corollary \ref{Eu_is_bdd}. Let $s: G\to E$ be a quasi-splitting provided by Proposition \ref{Prop: bddEu}. Fix a finite generating set $F$ of $G$ and a finite generating set $A$ of $Z$. It is easy to see that $F'=A\cup s(F)$ is a finite generating set of $E$. Fix a basepoint $x=(x_1,\cdots,x_n)\in \prod_{i=1}^nT_i$. The QT$_0$ property of $E$ shows that the orbit map $\G(E,F')\to \prod_{i=1}^nT_i, g\mapsto gx=(gx_1,\cdots,gx_n)$ is a $(\lambda_0,\epsilon_0)$-quasi-isometric embedding for some $\lambda_0\ge 1, \epsilon_0\ge 0$. That is,
    \begin{equation}\label{Equ: QIE}
        \lambda_0^{-1}|g|_{F'}-\epsilon_0\le |x-gx|=\sum_{i=1}^n|x_i-gx_i|\le \lambda_0|g|_{F'}+\epsilon_0
    \end{equation}
    for any $g\in E$. 

    Let $\bar T_i=T_i/Z$ be the quotient space for $1\le i\le n$. Similar to the proof of Proposition \ref{Prop: QuoHasPH}, we assume that there exist $1\le k\le m\le n$ such that $\bar T_i(1\le i\le m)$ are all bounded quotient spaces and $\chi_i'=\chi_i\circ s: G\to \R(1\le i\le k)$ are all unbounded quasimorphisms. Then there exist generating sets $S_i$ of $G$ such that $[S_i]\in \H_{\ell}^+(G)$ for $1\le i\le k$ and $G$ admits a proper diagonal action on $\prod_{i=1}^k\G(G,S_i)\times \prod_{i=m+1}^n\bar T_i$. Moreover, the Claim in the proof of Proposition \ref{Prop: QuoHasPH} shows that there exists $M>0$ such that $$\sum_{i=1}^k|\bar g|_{S_i}+\sum_{i=m+1}^n|\bar x_i-\bar g\bar x_i|\ge \lambda^{-1}(\sum_{i=1}^n|x_i-s(\bar g)x_i|)-M$$ for any $\bar g\in G$.

    Together with the inequality (\ref{Equ: QIE}), one has that
    \begin{align*}
         \sum_{i=1}^k|\bar g|_{S_i}+\sum_{i=m+1}^n|\bar x_i-\bar g\bar x_i| \ge & \lambda^{-1}(\sum_{i=1}^n|x_i-s(\bar g)x_i|)-M\\ \ge & (\lambda\lambda_0)^{-1}|s(\bar g)|_{F'}-\lambda^{-1}\epsilon_0-M \ge (\lambda\lambda_0)^{-1}|\bar g|_{F}-\lambda^{-1}\epsilon_0-M
    \end{align*}
    where the last inequality holds since $|s(\bar g)|_{F'}\ge |\bar g|_F$.
    
    On the other hand, it follows from the triangle inequality that $$\sum_{i=1}^k|\bar g|_{S_i}+\sum_{i=m+1}^n|\bar x_i-\bar g\bar x_i|\le \lambda' |\bar g|_F$$ where $\lambda'=\max_{t\in F}(\sum_{i=1}^k|t|_{S_i}+\sum_{i=m+1}^n|\bar x_i-t\bar x_i|)$.

    By setting $\lambda_1=\max\{\lambda',\lambda\lambda_0\}$ and $\epsilon_1=\lambda^{-1}\epsilon_0+M$, we have shown that the orbit map $\G(G,F)\to \prod_{i=1}^k\G(G,S_i)\times \prod_{i=m+1}^n\bar T_i, \bar g\mapsto (\bar g, \cdots \bar g, \bar g\bar x_{m+1}, \cdots, \bar g\bar x_n)$ is a $(\lambda_1,\epsilon_1)$-quasi-isometric embedding. This gives the QT$_0$ property of $G$ since each $\G(G,S_i)$ is a quasi-line. 

    \textbf{Step 2: if $E$ has property QT, then $G$ has property QT.}

    Suppose that $E$ has property QT. Then $E$ contains a finite-index subgroup $E'$ that has property QT$_0$. Denote $Z'=Z\cap E'$ and $G'=\pi(E')$. Since $[E: E']<\infty$, it follows that $[Z: Z']<\infty$ and $[G: G']<\infty$. Now we focus on the following central extension  $$1\to Z'\to E'\xrightarrow{\pi'} G'\to 1.$$ Let $[\omega']$ be the corresponding Euler class. The same arguments in \textbf{Step 1} shows that $G'$ has property QT$_0$ and thus QT. Since property QT is a commensurability invariant, we get that $G$ has property QT. 
\end{proof}

\subsection{Mapping class groups}
    Now we can prove Theorem \ref{IntroThm: MCG} and Theorem \ref{IntroThm: Multicurve}. 
    Let $\Sigma$ be a finite-type surface, possibly with boundary $\partial\Sigma$. 
    The \emph{mapping class group} $\mcg(\Sigma)$ is the group of homeomorphisms of $\Sigma$ restricting to the identity on $\partial\Sigma$, modulo isotopy. If there are punctures $P=\{p_1,\dots,p_n\}$ on $\Sigma$, we define $\mcg(\Sigma,P)$ to be the finite-index subgroup of $\mcg(\Sigma)$ that fixes each puncture. 

    Let $\Sigma$ be a finite-type surface. Let $V\subset \Sigma$ be a (possibly disconnected) closed subsurface. Let $V^{\orth}$ be the closure of $\Sigma-V$ in $\Sigma$. We say that $V$ is \emph{essential} if 
    \begin{enumerate}
        \item No component of $V$ or $V^{\orth}$ is a closed disk.
        \item Annular components of $V$ are pairwise non-homotopic, and the same holds for $V^{\orth}$.
    \end{enumerate}
    In the following, let $V\subset \Sigma$ be an essential subsurface, and let $\hat V$ be the \emph{capped} surface obtained by gluing a once-punctured disk to each boundary component of $V$. There is an induced homomorphism $\eta_V:\mcg(V)\to \mcg(\Sigma)$. If $V$ is a closed annulus, then $\eta_V$ is always injective. 
    The following theorem determines the kernel of $\eta_V$ when $V$ is not a closed annulus. For any essential simple closed curve $\gamma\subset \Sigma$, we denote by $T_{\gamma}\in \mcg(\Sigma)$ the Dehn twist with respect to $\gamma$.

    \begin{theorem}\cite[Theorem 3.18]{FM12}\label{theorem: Inclusion}
        Assume that $V$ is non-annular. Let $\alpha_1,\dots,\alpha_m$ denote the boundary components of $V$ that bound once-punctured disks in $V^{\orth}$, and let $\{\beta_1^+,\beta_1^-\},\dots,\{\beta_n^+,\beta_n^-\}$ denote the pairs of boundary components of $V$ that bound annuli in $V^{\orth}$. Then $\ker(\eta_V)$ is the free abelian group generated by $T_{\alpha_1},\dots, T_{\alpha_m}$ and $T_{\beta_1^+}T_{\beta_1^-}^{-1},\dots,T_{\beta_n^+}T_{\beta_n^-}^{-1}$.
    \end{theorem}

    Let $V$ be a finite-type surface with nonempty boundary components $\gamma_1,\dots,\gamma_n$. 
    Let $\hat V$ be the capped surface obtained by gluing a once-punctured disk with puncture $p_i$ to each $\gamma_i$, and let $P=\{p_1,\dots,p_n\}$. 
    There is a central extension called the \emph{capping sequence} 
    \begin{align*}
        1\to T_{\Gamma} \to \mcg(V)\xrightarrow{\pi} \mcg(\hat V,P)\to 1,
    \end{align*}
    where $T_{\Gamma}$ is the free abelian subgroup generated by Dehn twists $T_{\gamma_i}$ around $\gamma_i$ for $i=1,\dots,n$. 
    By \cite[Propositon 6.4]{FK16}, the capping sequence has a bounded Euler class. 

    In \cite{BBF21}, Bestvina--Bromberg--Fujiwara show that the mapping class group of any finite-type surface has property QT. By Theorem \ref{MainThm1}, we have 
    \begin{theorem}[Theorem \ref{IntroThm: MCG}]
        $\mcg(\Sigma)$ has property QT for any finite-type surface $\Sigma$ with boundary. In particular, braid groups have property QT.
    \end{theorem}

    Now let $\Sigma$ be a finite-type surface, possibly with boundary.
    A \textit{multicurve} $C\subset \Sigma$ is a collection of pairwise disjoint, homotopically distinct, essential simple closed curves. 
    Let $C=\{\alpha_1,\dots,\alpha_n\}$ be a multicurve on $\Sigma$. Let $\mcg(\Sigma;C)<\mcg(\Sigma)$ denote the stabilizer of $C$. Let $N$ be an open neighborhood of $\bigcup_{i=1}^n \alpha_i$, and let $\{\alpha_i^+,\alpha_i^-\}$ be the boundary components of the neighborhood of $\alpha_i$. By Theorem \ref{theorem: Inclusion}, the inclusion $\Sigma-N\hookrightarrow \Sigma$ induces a central extension
    \[1\to T_{C^{\pm}}\to \mcg(\Sigma-N) \to \mcg(\Sigma;C)\to F\to 1,\]
    where $T_{C^{\pm}}$ is the free abelian group generated by $T_{\alpha_1^+}T_{\alpha_1^-}^{-1},\dots,T_{\alpha_n^+}T_{\alpha_n^-}^{-1}$ and $F$ is a finite group. By Theorem \ref{IntroThm: MCG}, $\mcg(\Sigma-N)$ has property QT. Therefore, Theorem \ref{MainThm1} and the commensurability invariance of property QT give the following.

    \begin{theorem}[Theorem \ref{IntroThm: Multicurve}]
        Let $\Sigma$ be a finite-type surface possibly with boundary. The stabilizer in $\mcg(\Sigma)$ of any multicurve on $\Sigma$ has property QT. 
    \end{theorem}

\subsection{Outer automorphism groups of hyperbolic groups}

Then we prove Theorem \ref{IntroThm: Out(G)}, which says that for any torsion-free one-ended hyperbolic group $G$, $\out(G)$ has property QT. Our proof relies on the following theorem.

\begin{theorem}\cite[Theorem 1.2]{Lev05}\label{Thm: VirtDireProd}
    Let $G$ be a torsion-free one-ended hyperbolic group. The group $\out(G)$ is virtually a direct product $\Z^q\times M$, where $M$ is the quotient of $\prod_{i=1}^m\pmcg^{\partial}(\Sigma_i)$ by a central subgroup isomorphic to $\Z^r$. 
\end{theorem}

\begin{proof}[Proof of Theorem \ref{IntroThm: Out(G)}]
    By Theorem \ref{IntroThm: MCG} and Theorem \ref{MainThm1}, the group $M$ in Theorem \ref{Thm: VirtDireProd} has property QT. Since property QT is invariant under taking direct product and finite-index supergroup, $\out(G)$ also has property QT.
\end{proof}

\section{Open questions}\label{sec: OpenQues}
In this final section, we collect some open questions about properties PH and QT. 

Corollary \ref{Cor: LinealPH} shows that the property of being finitely generated is implied by the property PH from lineal actions. Note that the essential part of the proof of Corollary \ref{Cor: LinealPH} is Lemma \ref{Lem: OriLineal}, which uses lineal actions to construct a finite generating set. However, it is not clear how to construct a finite generating set for a group with property PH from general hyperbolic actions. So we raise the following question.
\begin{question}
    Does property PH imply finite generation? 
\end{question}

If each hyperbolic space in the definition of property PH is assumed to be proper, then we say $G$ has \textit{property PPH}, which is studied by Cui and Wan in \cite{CW25}. In particular, they obtained the following Tits Alternative for finitely generated groups with property PPH: a finitely generated group with property PPH from focal and lineal actions must be amenable. We think the additional assumption that each hyperbolic space is proper is not essential. So we raise the following question. 

\begin{question}\label{Que: PHFromFocal}
    Are (finitely generated) groups with property PH from focal and lineal actions amenable?
\end{question}


Let $\L=\langle t, a, b\mid [a,b]=1, ta^2b^{-1}t^{-1}=a^2b, tab^2t^{-1}=a^{-1}b^2\rangle$ be the Leary-Minasyan group defined in \cite{LM21}. It is proved in \cite{LM21} that $\L$ is a CAT(0) group quasi-isometric to $F_5\times \Z^2$. Button \cite{But25} utilized this group to show that property QT is not a quasi-isometry invariant. As a generalization of both word-hyperbolicity and property QT, we wonder that
\begin{question}\label{Que: PHQIInv}
    Is property PH on finitely generated groups a quasi-isometry invariant? In particular, does the Leary-Minasyan group $\L$ have property PH?
\end{question}

Recall that a finitely generated group with property PH may contain distorted elements. By studying existing examples with or without property PH, we ask

\begin{question}
    Is every element in a finitely generated group with property PH either undistorted or at least exponentially distorted?
\end{question}

In \cite{WY25}, Yang and the second author obtained the locally uniform exponential growth for a large class of finitely generated groups with property PH. One application there is to recover the classical Mangahas's result \cite{Man10} that mapping class groups have locally uniform exponential growth. However, as an analogue of mapping class groups, it is still open whether $\out(F_n)$ with $n\ge 3$ has locally uniform exponential growth. Motivated by the proof outline of \cite{WY25}, we wonder

\begin{question}
    Does $\out(F_n)$ with $n\ge 3$ have property PH?
\end{question}

\begin{question}
    Does the shadowing property defined in \cite{WY25} have an invariance similar to Theorem \ref{MainThm1}?
\end{question}

\printbibliography

@Article{ABO19,
  author       = {Abbott, Carolyn and Balasubramanya, Sahana H. and Osin, Denis},
  title        = {Hyperbolic structures on groups},
  issn         = {1472-2747,1472-2739},
  number       = {4},
  pages        = {1747--1835},
  volume       = {19},
  date         = {2019},
  doi          = {10.2140/agt.2019.19.1747},
  journaltitle = {Algebraic \& Geometric Topology},
  mrnumber     = {3995018},
  shortjournal = {Algebr. Geom. Topol.},
}

@misc{BF25,
 author = {Sahana Balasubramanya and Talia Fernos},
 title = {Acylindricity in {Higher} {Rank}, {Part} {I} : {Fundamentals}},
 year = {2025},
 howpublished = {Preprint, {arXiv}:2512.21936 [math.{GR}] (2025)},
 keywords = {20F67,20F65},
 url = {https://arxiv.org/abs/2512.21936},
 arXiv = {arXiv:2512.21936}
}

@article{Man10,
 author = {Mangahas, Johanna},
 title = {Uniform uniform exponential growth of subgroups of the mapping class group},
 fjournal = {Geometric and Functional Analysis. GAFA},
 journal = {Geom. Funct. Anal.},
 issn = {1016-443X},
 volume = {19},
 number = {5},
 pages = {1468--1480},
 year = {2010},
 language = {English},
 doi = {10.1007/s00039-009-0038-y},
 keywords = {57M07,20F65,60G50},
 zbMATH = {5680963},
 Zbl = {1207.57005}
}

@Article{BNS26,
  author     = {Balasubramanya, Sahana and Niebler, Rachel and Shapiro, Roberta},
  title      = {Characterizing property ({NL}) in \{C\}oxeter groups},
  number     = {{arXiv}:2606.23811},
  abstract   = {A group has Property ({NL}) -- which stands for ``no loxodromics" -- if no element of the group acts loxodromically on any hyperbolic space. In this brief note, we provide a complete characterization of which Coxeter groups have Property ({NL}).},
  date       = {2026},
  doi        = {10.48550/arXiv.2606.23811},
  eprint     = {2606.23811 [math.GR]},
  eprinttype = {arxiv},
  file       = {Preprint PDF:http\://arxiv.org/pdf/2606.23811v1:application/pdf},
  keywords   = {Mathematics - Group Theory},
  publisher  = {{arXiv}},
  url        = {http://arxiv.org/abs/2606.23811},
  urldate    = {2026-07-01},
}

@Book{Mar23,
  author    = {Martelli, Bruno},
  publisher = {Independently published},
  title     = {An Introduction to Geometric Topology},
  isbn      = {9798858790709},
  abstract  = {This book provides a self-contained introduction to the topology and geometry of surfaces and three-manifolds. The main goal is to describe Thurston's geometrisation of three-manifolds, proved by Perelman in 2002.The book is divided into three parts: the first is devoted to hyperbolic geometry, the second to surfaces, and the third to three-manifolds.It contains complete proofs of Mostow's rigidity, the thick-thin decomposition, Thurston's classification of the diffeomorphisms of surfaces (via Bonahon's geodesic currents), the prime and {JSJ} decomposition, the topological and geometric classification of Seifert manifolds, and Thurston's hyperbolic Dehn filling Theorem.The text should be suitable for a master or {PhD} student in mathematics interested in geometry, or for any curious mathematician with a standard background in topology and analysis.This is the third edition of the book, published in August 2023. With respect to the first edition, it contains some minor corrections, and all the interior figures are in colour.},
  date      = {2023},
  file      = {Amazon.com Link:https\://www.amazon.com/dp/B0CHL1FXVQ:text/html},
  pagetotal = {496},
}

@article {Lev05,
    AUTHOR = {Levitt, Gilbert},
     TITLE = {Automorphisms of hyperbolic groups and graphs of groups},
   JOURNAL = {Geom. Dedicata},
  FJOURNAL = {Geometriae Dedicata},
    VOLUME = {114},
      YEAR = {2005},
     PAGES = {49--70},
      ISSN = {0046-5755,1572-9168},
   MRCLASS = {20F28 (20F67)},
  MRNUMBER = {2174093},
MRREVIEWER = {Oleg\ Bogopol\cprime ski\u i},
       DOI = {10.1007/s10711-004-1492-1},
       URL = {https://doi.org/10.1007/s10711-004-1492-1},
}

@article {Sel97,
    AUTHOR = {Sela, Z.},
     TITLE = {Structure and rigidity in ({G}romov) hyperbolic groups and
              discrete groups in rank {$1$} {L}ie groups. {II}},
   JOURNAL = {Geom. Funct. Anal.},
  FJOURNAL = {Geometric and Functional Analysis},
    VOLUME = {7},
      YEAR = {1997},
    NUMBER = {3},
     PAGES = {561--593},
      ISSN = {1016-443X,1420-8970},
   MRCLASS = {20F32 (20E08 57M07)},
  MRNUMBER = {1466338},
MRREVIEWER = {Nadia\ Benakli},
       DOI = {10.1007/s000390050019},
       URL = {https://doi.org/10.1007/s000390050019},
}

@article {NR97,
    AUTHOR = {Neumann, Walter D. and Reeves, Lawrence},
     TITLE = {Central extensions of word hyperbolic groups},
   JOURNAL = {Ann. of Math. (2)},
  FJOURNAL = {Annals of Mathematics. Second Series},
    VOLUME = {145},
      YEAR = {1997},
    NUMBER = {1},
     PAGES = {183--192},
      ISSN = {0003-486X,1939-8980},
   MRCLASS = {20F32 (20J05)},
  MRNUMBER = {1432040},
MRREVIEWER = {John\ Meier},
       DOI = {10.2307/2951827},
       URL = {https://doi.org/10.2307/2951827},
}

@Article{CW25,
  author     = {Cui, Jiaqi and Wan, Renxing},
  title      = {Proper actions on finite products of proper Gromov-hyperbolic spaces},
  number     = {{arXiv}:2505.09454},
  abstract   = {A group \$G\$ is said to have property ({PPH}) if there exist finitely many proper Gromov-hyperbolic spaces \$X\_1,{\textbackslash}ldots, X\_l\$ on which \$G\$ acts cocompactly such that the diagonal action \$G{\textbackslash}curvearrowright {\textbackslash}prod\_\{i=1\}{\textasciicircum}{lX}\_i\$ with \${\textbackslash}ell{\textasciicircum}1\$-metric is proper. Such groups include hyperbolic groups, Coxeter groups, solvable Baumslag-Solitar groups, {BMW} groups and their finite direct products. In the first part of this paper, we show that the class of finitely generated subgroups in groups having property ({PPH}) satisfies the weak Tits Alternative and a large subclass satisfies both the strong Tits Alternative and the Girth Alternative. Furthermore, we show that the dimension of the second bounded cohomology of a non-amenable group as a finitely generated subgroup of groups having property ({PPH}) is infinite. In the second part of this paper, we show that the set of simultaneously contracting elements in a group acting on finitely many metric spaces with contracting property contains an infinite independent subset. Furthermore, we show that the set of simultaneously hyperbolic elements in a group acting non-elliptically and non-horocyclically on finitely many proper Gromov-hyperbolic spaces has positive density with respect to any proper word metric.},
  date       = {2025},
  doi        = {10.48550/arXiv.2505.09454},
  eprint     = {2505.09454 [math]},
  eprinttype = {arxiv},
  file       = {Preprint PDF:http\://arxiv.org/pdf/2505.09454v1:application/pdf},
  keywords   = {Mathematics - Group Theory, Mathematics - Geometric Topology},
  publisher  = {{arXiv}},
  url        = {http://arxiv.org/abs/2505.09454},
  urldate    = {2025-06-05},
}

@article {Neu51,
    AUTHOR = {Neumann, B. H.},
     TITLE = {Groups with finite classes of conjugate elements},
   JOURNAL = {Proc. London Math. Soc. (3)},
  FJOURNAL = {Proceedings of the London Mathematical Society. Third Series},
    VOLUME = {1},
      YEAR = {1951},
     PAGES = {178--187},
      ISSN = {0024-6115,1460-244X},
   MRCLASS = {20.0X},
  MRNUMBER = {43779},
MRREVIEWER = {R.\ M.\ Thrall},
       DOI = {10.1112/plms/s3-1.1.178},
       URL = {https://doi.org/10.1112/plms/s3-1.1.178},
}

@Article{BBF15,
  author       = {Bestvina, Mladen and Bromberg, Ken and Fujiwara, Koji},
  date         = {2015},
  journaltitle = {Publ. Math. Inst. Hautes Études Sci.},
  title        = {Constructing group actions on quasi-trees and applications to mapping class groups},
  doi          = {10.1007/s10240-014-0067-4},
  issn         = {0073-8301},
  pages        = {1--64},
  volume       = {122},
  journal      = {Publ. Math. Inst. Hautes Études Sci.},
  mrnumber     = {3415065},
}

@Article{BBF21,
  author       = {Bestvina, M. and Bromberg, K. and Fujiwara, K.},
  date         = {2021},
  journaltitle = {Annales Henri Lebesgue},
  title        = {Proper actions on finite products of quasi-trees},
  doi          = {10.5802/ahl.85},
  pages        = {685--709},
  volume       = {4},
  journal      = {Ann. H. Lebesgue},
  mrnumber     = {4315766},
}

@article {PS23,
    AUTHOR = {Petyt, Harry and Spriano, Davide},
     TITLE = {Unbounded domains in hierarchically hyperbolic groups},
   JOURNAL = {Groups Geom. Dyn.},
  FJOURNAL = {Groups, Geometry, and Dynamics},
    VOLUME = {17},
      YEAR = {2023},
    NUMBER = {2},
     PAGES = {479--500},
      ISSN = {1661-7207,1661-7215},
   MRCLASS = {20F65 (20E99 20F67 51F30)},
  MRNUMBER = {4584673},
MRREVIEWER = {Bruno\ P.\ Zimmermann},
       DOI = {10.4171/ggd/706},
       URL = {https://doi.org/10.4171/ggd/706},
}

@Article{But25,
  author       = {Button, Jack O.},
  date         = {2025},
  journaltitle = {Algebraic \& Geometric Topology},
  title        = {Generalised Baumslag-Solitar groups and hierarchically hyperbolic groups},
  doi          = {10.2140/agt.2025.25.2253},
  issn         = {1472-2747,1472-2739},
  number       = {4},
  pages        = {2253--2279},
  volume       = {25},
  mrnumber     = {4950904},
  shortjournal = {Algebr. Geom. Topol.},
}

@article{HRSS24,
title = "Equivariant hierarchically hyperbolic structures for 3-manifold groups via quasimorphisms",
abstract = "Behrstock, Hagen, and Sisto classified 3-manifold groups admitting a hierarchically hyperbolic space structure. However, these structures were not always equivariant with respect to the group. In this paper, we classify 3-manifold groups admitting equivariant hierarchically hyperbolic structures. The key component of our proof is that the admissible groups introduced by Croke and Kleiner always admit equivariant hierarchically hyperbolic structures. For non-geometric graph manifolds, this is contrary to a conjecture of Behrstock, Hagen, and Sisto and also contrasts with results about CAT(0) cubical structures on these groups. Perhaps surprisingly, our arguments involve the construction of suitable quasimorphisms on the Seifert pieces, in order to construct actions on quasi-lines.",
keywords = "math.GT, math.GR, 20F65, 20F67, 20E08",
author = "Mark Hagen and Jacob Russell and Alessandro Sisto and Davide Spriano",
year = "2024",
month = jul,
day = "3",
doi = "10.48550/arXiv.2206.12244",
language = "English",
journal = "Annales de l'institut Fourier",
publisher = "Association des Annales de l'Institut Fourier",
}

@article {CCMT15,
    AUTHOR = {Caprace, Pierre-Emmanuel and Cornulier, Yves and Monod,
              Nicolas and Tessera, Romain},
     TITLE = {Amenable hyperbolic groups},
   JOURNAL = {J. Eur. Math. Soc. (JEMS)},
  FJOURNAL = {Journal of the European Mathematical Society (JEMS)},
    VOLUME = {17},
      YEAR = {2015},
    NUMBER = {11},
     PAGES = {2903--2947},
      ISSN = {1435-9855},
   MRCLASS = {20F67 (22D05 43A07 43A75 57S30)},
  MRNUMBER = {3420526},
MRREVIEWER = {Tulsi Srinivasan},
       DOI = {10.4171/JEMS/575},
       URL = {https://doi.org/10.4171/JEMS/575},
}

@book{CDP06,
  title={G{\'e}om{\'e}trie et th{\'e}orie des groupes: les groupes hyperboliques de Gromov},
  author={Coornaert, Michel and Delzant, Thomas and Papadopoulos, Athanase},
  volume={1441},
  year={2006},
  publisher={Springer}
}

@Article{WY25,
  author       = {Wan, Renxing and Yang, Wenyuan},
  title        = {Uniform exponential growth for groups with proper product actions on hyperbolic spaces},
  issn         = {0021-8693,1090-266X},
  pages        = {189--235},
  volume       = {676},
  date         = {2025},
  doi          = {10.1016/j.jalgebra.2025.03.028},
  journaltitle = {Journal of Algebra},
  mrnumber     = {4892723},
  shortjournal = {J. Algebra},
}

@Article{DHS17,
  author       = {Durham, Matthew and Hagen, Mark and Sisto, Alessandro},
  date         = {2017-08},
  journaltitle = {Geometry \& Topology},
  title        = {Boundaries and automorphisms of hierarchically hyperbolic spaces},
  doi          = {10.2140/gt.2017.21.3659},
  issn         = {1364-0380, 1465-3060},
  language     = {en},
  number       = {6},
  pages        = {3659--3758},
  volume       = {21},
  journal      = {Geometry \& Topology},
}

@Article{DHS20,
  author       = {Durham, Matthew and Hagen, Mark and Sisto, Alessandro},
  date         = {2020},
  journaltitle = {Geometry \& Topology},
  title        = {Correction to the article {Boundaries} and automorphisms of hierarchically hyperbolic spaces},
  doi          = {10.2140/gt.2020.24.1051},
  issn         = {1364-0380, 1465-3060},
  language     = {en},
  number       = {2},
  pages        = {1051--1073},
  volume       = {24},
  journal      = {Geometry \& Topology},
}

@Book{FM12,
  author    = {Farb, Benson and Margalit, Dan},
  date      = {2012},
  title     = {A primer on mapping class groups},
  isbn      = {9780691147949},
  publisher = {Princeton University Press, Princeton, NJ},
  series    = {Princeton {Mathematical} {Series}},
  volume    = {49},
  mrnumber  = {2850125},
}

@incollection {Gro87,
    AUTHOR = {Gromov, M.},
     TITLE = {Hyperbolic groups},
 BOOKTITLE = {Essays in group theory},
    SERIES = {Math. Sci. Res. Inst. Publ.},
    VOLUME = {8},
     PAGES = {75--263},
 PUBLISHER = {Springer, New York},
      YEAR = {1987},
      ISBN = {0-387-96618-8},
   MRCLASS = {20F32 (20F06 20F10 22E40 53C20 57R75 58F17)},
  MRNUMBER = {919829},
MRREVIEWER = {Christopher\ W.\ Stark},
       DOI = {10.1007/978-1-4613-9586-7_3},
}

@Article{HP22,
  author       = {Hagen, Mark F. and Petyt, Harry},
  date         = {2022},
  journaltitle = {Algebraic \& Geometric Topology},
  title        = {Projection complexes and quasimedian maps},
  doi          = {10.2140/agt.2022.22.3277},
  issn         = {1472-2739},
  language     = {en},
  number       = {7},
  pages        = {3277--3304},
  volume       = {22},
  file         = {Full Text PDF:https\://msp.org/agt/2022/22-7/agt-v22-n7-p05-s.pdf:application/pdf},
  journal      = {Algebraic \& Geometric Topology},
  publisher    = {Mathematical Sciences Publishers},
}

@article {FK16,
    AUTHOR = {Fujiwara, Koji and Kapovich, Michael},
     TITLE = {On quasihomomorphisms with noncommutative targets},
   JOURNAL = {Geom. Funct. Anal.},
  FJOURNAL = {Geometric and Functional Analysis},
    VOLUME = {26},
      YEAR = {2016},
    NUMBER = {2},
     PAGES = {478--519},
      ISSN = {1016-443X,1420-8970},
   MRCLASS = {20F65 (20F67)},
  MRNUMBER = {3513878},
MRREVIEWER = {Xiangdong\ Xie},
       DOI = {10.1007/s00039-016-0364-9},
       URL = {https://doi.org/10.1007/s00039-016-0364-9},
}

@article {HNY25,
    AUTHOR = {Han, Suzhen and Nguyen, Hoang Thanh and Yang, Wenyuan},
     TITLE = {Property ({QT}) for 3-manifold groups},
   JOURNAL = {Algebr. Geom. Topol.},
  FJOURNAL = {Algebraic \& Geometric Topology},
    VOLUME = {25},
      YEAR = {2025},
    NUMBER = {1},
     PAGES = {107--159},
      ISSN = {1472-2747,1472-2739},
   MRCLASS = {20F65 (20F67 57K30)},
  MRNUMBER = {4877252},
       DOI = {10.2140/agt.2025.25.107},
       URL = {https://doi.org/10.2140/agt.2025.25.107},
}

@article {LM21,
    AUTHOR = {Leary, Ian J. and Minasyan, Ashot},
     TITLE = {Commensurating {HNN} extensions: nonpositive curvature and
              biautomaticity},
   JOURNAL = {Geom. Topol.},
  FJOURNAL = {Geometry \& Topology},
    VOLUME = {25},
      YEAR = {2021},
    NUMBER = {4},
     PAGES = {1819--1860},
      ISSN = {1465-3060,1364-0380},
   MRCLASS = {20F10 (20E06 20F67)},
  MRNUMBER = {4286364},
MRREVIEWER = {Olga\ Varghese},
       DOI = {10.2140/gt.2021.25.1819},
       URL = {https://doi.org/10.2140/gt.2021.25.1819},
}

@article{Bur71,
 author = {Burns, R. G.},
 title = {On finitely generated subgroups of free products},
 fjournal = {Journal of the Australian Mathematical Society},
 journal = {J. Aust. Math. Soc.},
 issn = {1446-7887},
 volume = {12},
 pages = {358--364},
 year = {1971},
 language = {English},
 doi = {10.1017/S1446788700009824},
 keywords = {20E06,20K99},
 zbMATH = {3346653},
 Zbl = {0218.20026}
}

@article{Far98,
 author = {Farb, B.},
 title = {Relatively hyperbolic groups},
 fjournal = {Geometric and Functional Analysis. GAFA},
 journal = {Geom. Funct. Anal.},
 issn = {1016-443X},
 volume = {8},
 number = {5},
 pages = {810--840},
 year = {1998},
 language = {English},
 doi = {10.1007/s000390050075},
 keywords = {20F67,57M07,57M05,20F05,20E06,20F10},
 zbMATH = {1223299},
 Zbl = {0985.20027}
}

@Article{Man06,
  author       = {Manning, J. F.},
  date         = {2006},
  journaltitle = {Journal of the London Mathematical Society. Second Series},
  title        = {Quasi-actions on trees and property ({QFA})},
  doi          = {10.1112/S0024610705022738},
  issn         = {0024-6107,1469-7750},
  number       = {1},
  pages        = {84--108},
  volume       = {73},
  mrnumber     = {2197372},
  shortjournal = {J. London Math. Soc. (2)},
}

@Article{Man08,
  author       = {Manning, Jason Fox},
  date         = {2008},
  journaltitle = {Algebraic \& Geometric Topology},
  title        = {Actions of certain arithmetic groups on Gromov hyperbolic spaces},
  doi          = {10.2140/agt.2008.8.1371},
  issn         = {1472-2747,1472-2739},
  number       = {3},
  pages        = {1371--1402},
  volume       = {8},
  mrnumber     = {2443247},
  shortjournal = {Algebr. Geom. Topol.},
}

@article{Osi16,
  title={Acylindrically hyperbolic groups},
  author={Osin, Denis},
  journal={Transactions of the American Mathematical Society},
  volume={368},
  number={2},
  pages={851--888},
  year={2016}
}

@Article{Wan25,
  author     = {Wan, Renxing},
  title      = {Quasi-retracts of groups},
  number     = {{arXiv}:2501.02238},
  abstract   = {A subgroup \$H{\textbackslash}le G\$ is a quasi-retract of \$G\$ if there exists a quasi-homomorphism from \$G\$ to \$H\$ which restricts to the identity on \$H\$. In this paper, we study under what conditions a subgroup is a quasi-retract of the whole group. We also show that a subgroup of \$F\_2\$ is a nontrivial quasi-retract if and only if it is cyclic. As an intermediate result, we show that for any short exact sequence of groups \$1{\textbackslash}to H{\textbackslash}to G{\textbackslash}to G/H{\textbackslash}to 1\$, the following are equivalent: (i) it is left quasi-split (or equivalently, \$H\$ is a quasi-retract of \$G\$). (ii) it is right quasi-split and \$H\$ almost commutes with a right coset transversal of \$H\$ in \$G\$. (iii) \$G\$ is strictly quasi-isomorphic to \$H{\textbackslash}times G/H\$. This is an analogue to the classical result on split short exact sequence of groups.},
  date       = {2025},
  doi        = {10.48550/arXiv.2501.02238},
  eprint     = {2501.02238 [math]},
  eprinttype = {arxiv},
  file       = {Preprint PDF:http\://arxiv.org/pdf/2501.02238v2:application/pdf},
  keywords   = {Mathematics - Group Theory},
  publisher  = {{arXiv}},
  url        = {http://arxiv.org/abs/2501.02238},
  urldate    = {2025-06-05},
}

@book {Cal09,
    AUTHOR = {Calegari, Danny},
     TITLE = {scl},
    SERIES = {MSJ Memoirs},
    VOLUME = {20},
 PUBLISHER = {Mathematical Society of Japan, Tokyo},
      YEAR = {2009},
     PAGES = {xii+209},
      ISBN = {978-4-931469-53-2},
   MRCLASS = {57M07 (20F65 20F67 20J05 37E45)},
  MRNUMBER = {2527432},
MRREVIEWER = {Athanase\ Papadopoulos},
       DOI = {10.1142/e018},
       URL = {https://doi.org/10.1142/e018},
}

@Book{Bro82,
  author    = {Brown, Kenneth S.},
  publisher = {Springer-Verlag, New York-Berlin},
  title     = {Cohomology of groups},
  isbn      = {9780387906881},
  series    = {Graduate Texts in Mathematics},
  volume    = {87},
  date      = {1982},
  doi       = {10.1007/978-1-4684-9327-6},
  mrnumber  = {672956},
  pagetotal = {x+306},
}

@InProceedings{DJ99,
  author    = {Dranishnikov, A. and Januszkiewicz, T.},
  booktitle = {Topology Proceedings},
  title     = {Every Coxeter group acts amenably on a compact space},
  pages     = {135--141},
  volume    = {24},
  date      = {1999},
  issn      = {0146-4124,2331-1290},
  mrnumber  = {1802681},
}

@Article{Ish14,
  author       = {Ishida, Tomohiko},
  date         = {2014},
  journal      = {Proceedings of the American Mathematical Society. Series B},
  title        = {Quasi-morphisms on the group of area-preserving diffeomorphisms of the 2-disk via braid groups},
  doi          = {10.1090/S2330-1511-2014-00002-X},
  issn         = {2330-1511},
  pages        = {43--51},
  volume       = {1},
  mrnumber     = {3181631},
  shortjournal = {Proc. Amer. Math. Soc. Ser. B},
}

@Article{Ver24,
  author    = {Vergara, Ignacio},
  date      = {2024},
  title     = {Quasi-trees, Lipschitz free spaces, and actions on \${\textbackslash}ell{\textasciicircum}1\$},
  doi       = {10.48550/arXiv.2409.14186},
  number    = {{arXiv}:2409.14186},
  abstract  = {We show that the Lipschitz free space of a countable simplicial quasi-tree is isomorphic to \${\textbackslash}ell{\textasciicircum}1\$. As a consequence, every finitely generated group with Property ({QT}) of Bestvina--Bromberg--Fujiwara has a proper uniformly Lipschitz affine action on \${\textbackslash}ell{\textasciicircum}1\$ with quasi-isometrically embedded orbits. We also show that \$3\$-manifold groups admit proper uniformly Lipschitz affine actions on \${\textbackslash}ell{\textasciicircum}1\$.},
  file      = {Preprint PDF:http\://arxiv.org/pdf/2409.14186v1:application/pdf},
  keywords  = {Mathematics - Group Theory, Mathematics - Functional Analysis},
  publisher = {{arXiv}},
}

@Article{FMS25,
  author     = {Fournier-Facio, Francesco and Mangioni, Giorgio and Sisto, Alessandro},
  title      = {Bounded cohomology, quotient extensions, and hierarchical hyperbolicity},
  number     = {{arXiv}:2505.20462},
  abstract   = {We call a central extension bounded if its Euler class is represented by a bounded cocycle. We prove that a bounded central extension of a hierarchically hyperbolic group ({HHG}) is still a {HHG}; conversely if a central extension is a {HHG}, then the extension is bounded, and the quotient is commensurable to a {HHG}. Motivated by questions on hierarchical hyperbolicity of quotients of mapping class groups, we therefore consider the general problem of determining when a quotient of a bounded central extension is still bounded, which we prove to be equivalent to an extendability problem for quasihomomorphisms. Finally, we show that quotients of the 4-strands braid group by suitable powers of a pseudo-Anosov are {HHG}, and in fact bounded central extensions of some {HHG}. We also speculate on how to extend the previous result to all mapping class groups.},
  date       = {2025},
  doi        = {10.48550/arXiv.2505.20462},
  eprinttype = {arxiv},
  file       = {Preprint PDF:http\://arxiv.org/pdf/2505.20462v1:application/pdf},
  keywords   = {Mathematics - Group Theory},
  publisher  = {{arXiv}},
  url        = {http://arxiv.org/abs/2505.20462},
  urldate    = {2025-06-05},
}

@article {NY23a,
    AUTHOR = {Nguyen, Hoang Thanh and Yang, Wenyuan},
     TITLE = {Croke-{K}leiner admissible groups: property ({QT}) and
              quasiconvexity},
   JOURNAL = {Michigan Math. J.},
  FJOURNAL = {Michigan Mathematical Journal},
    VOLUME = {73},
      YEAR = {2023},
    NUMBER = {5},
     PAGES = {971--1019},
      ISSN = {0026-2285,1945-2365},
   MRCLASS = {20F65},
  MRNUMBER = {4665613},
       DOI = {10.1307/mmj/20216045},
       URL = {https://doi.org/10.1307/mmj/20216045},
}

@Article{BFFG24,
  author       = {Balasubramanya, Sahana H. and Fournier-Facio, Francesco and Genevois, Anthony},
  date         = {2024-06-18},
  journal      = {Groups, Geometry, and Dynamics},
  title        = {Property ({NL}) for group actions on hyperbolic spaces (with an appendix by Alessandro Sisto)},
  doi          = {10.4171/ggd/806},
  issn         = {1661-7207},
  abstract     = {Sahana H. Balasubramanya, Francesco Fournier-Facio, Anthony Genevois},
  file         = {Full Text PDF:https\://ems.press/content/serial-article-files/47879:application/pdf},
  langid       = {english},
}

@Article{Heu20,
  author       = {Heuer, Nicolaus},
  date         = {2020},
  journal = {Mathematica Scandinavica},
  title        = {Low-dimensional bounded cohomology and extensions of groups},
  doi          = {10.7146/math.scand.a-114969},
  issn         = {0025-5521,1903-1807},
  number       = {1},
  pages        = {5--31},
  volume       = {126},
  mrnumber     = {4087570},
  shortjournal = {Math. Scand.},
}

@Article{Tao26,
  author       = {Tao, Bingxue},
  date         = {2026},
  journaltitle = {Pacific Journal of Mathematics},
  title        = {Property {QT} of relatively hierarchically hyperbolic groups},
  doi          = {10.2140/pjm.2026.343.231},
  issn         = {1945-5844},
  number       = {1},
  pages        = {231--260},
  url          = {https://msp.org/pjm/2026/343-1/p07.xhtml},
  urldate      = {2026-06-01},
  volume       = {343},
  file         = {Full Text PDF:https\://msp.org/pjm/2026/343-1/pjm-v343-n1-p07-s.pdf:application/pdf},
  langid       = {english},
  publisher    = {Mathematical Sciences Publishers},
}

@Article{Alo94,
  author       = {Alonso, Juan M.},
  title        = {Finiteness conditions on groups and quasi-isometries},
  issn         = {0022-4049,1873-1376},
  number       = {2},
  pages        = {121--129},
  volume       = {95},
  date         = {1994},
  doi          = {10.1016/0022-4049(94)90069-8},
  journaltitle = {Journal of Pure and Applied Algebra},
  mrnumber     = {1293049},
  shortjournal = {J. Pure Appl. Algebra},
}

@Article{FSY04,
  author   = {Fujiwara, Koji and Shioya, Takashi and Yamagata, Saeko},
  title    = {Parabolic isometries of {CAT}(0) spaces and {CAT}(0) dimensions},
  doi      = {10.2140/agt.2004.4.861},
  issn     = {1472-2747},
  pages    = {861--892},
  volume   = {4},
  journal  = {Algebraic \& Geometric Topology},
  mrnumber = {2100684},
  year     = {2004},
}

@Article{PSZ25,
  author     = {Petyt, Harry and Spriano, Davide and Zalloum, Abdul},
  title      = {Stable cylinders and fine structures for hyperbolic groups and curve graphs},
  number     = {{arXiv}:2501.13600},
  abstract   = {In 1995, Rips and Sela asked if torsionfree hyperbolic groups admit globally stable cylinders. We establish this property for all residually finite hyperbolic groups and curve graphs of finite-type surfaces. These cylinders are fine objects, and the core of our approach is to upgrade the hyperbolic space to one with improved fine properties via a generalisation of Sageev's construction. The methods also let us prove that curve graphs of surfaces admit equivariant quasiisometric embeddings in finite products of quasitrees.},
  date       = {2025},
  doi        = {10.48550/arXiv.2501.13600},
  eprinttype = {arxiv},
  file       = {Preprint PDF:http\://arxiv.org/pdf/2501.13600v1:application/pdf},
  keywords   = {Mathematics - Geometric Topology, Mathematics - Group Theory, Mathematics - Metric Geometry},
  publisher  = {{arXiv}},
}

@Book{Szc25,
  author    = {Szczepański, Andrzej},
  publisher = {World Scientific Publishing Co. Pte. Ltd., Hackensack, {NJ}},
  title     = {Geometry of crystallographic groups},
  edition   = {Second},
  isbn      = {9789811286599 9789811286605 9789811286612},
  series    = {Algebra and Discrete Mathematics},
  volume    = {5},
  date      = {2025},
  mrnumber  = {4812992},
  pagetotal = {xiii+257},
}

@Article{RS94,
  author       = {Rips, E. and Sela, Z.},
  title        = {Structure and rigidity in hyperbolic groups. I},
  issn         = {1016-443X,1420-8970},
  number       = {3},
  pages        = {337--371},
  volume       = {4},
  date         = {1994},
  doi          = {10.1007/BF01896245},
  journaltitle = {Geometric and Functional Analysis},
  mrnumber     = {1274119},
  shortjournal = {Geom. Funct. Anal.},
}

@Article{HM26,
  author     = {Hadziosmanovic, Ervin and Mangioni, Giorgio},
  title      = {Outer automorphism groups of hyperbolic groups, bounded extensions, and hierarchical hyperbolicity},
  number     = {{arXiv}:2605.23829},
  abstract   = {We prove that the outer automorphism group of a one-ended hyperbolic group is virtually a hierarchically hyperbolic group ({HHG}), under mild orientability conditions on the associated {JSJ} decomposition. This is done by proving that a finite-index subgroup is a central extension of a product of orbifold mapping class groups, and the extension has bounded Euler class. Our theorem is sharp: we exhibit a surface amalgam whose fundamental group has full outer automorphism group which is not a {HHG}.},
  date       = {2026},
  doi        = {10.48550/arXiv.2605.23829},
  eprint     = {2605.23829 [math.GR]},
  eprinttype = {arxiv},
  file       = {Preprint PDF:http\://arxiv.org/pdf/2605.23829v2:application/pdf},
  keywords   = {Mathematics - Group Theory, Mathematics - Geometric Topology},
  publisher  = {{arXiv}},
  url        = {http://arxiv.org/abs/2605.23829},
  urldate    = {2026-07-01},
}

@Article{Sis13,
  author       = {Sisto, Alessandro},
  title        = {Projections and relative hyperbolicity},
  issn         = {0013-8584},
  number       = {1},
  pages        = {165--181},
  volume       = {59},
  date         = {2013},
  doi          = {10.4171/LEM/59-1-6},
  journaltitle = {L'Enseignement Mathématique. Revue Internationale. 2e Série},
  mrnumber     = {3113603},
  shortjournal = {Enseign. Math. (2)},
}

@Article{Hru10,
  author       = {Hruska, G. Christopher},
  date         = {2010},
  journaltitle = {Algebraic \& Geometric Topology},
  title        = {Relative hyperbolicity and relative quasiconvexity for countable groups},
  doi          = {10.2140/agt.2010.10.1807},
  issn         = {1472-2747},
  number       = {3},
  pages        = {1807--1856},
  volume       = {10},
  journal      = {Algebraic \& Geometric Topology},
  mrnumber     = {2684983},
  priority     = {prio1},
}

\end{document}